\documentclass[a4paper,11pt]{article}
\usepackage{amsmath,amsthm}
\usepackage{authblk}
\usepackage[style=numeric-comp,maxbibnames=99,bibencoding=utf8,firstinits=true]{biblatex}
\usepackage{booktabs}
\usepackage{cancel}
\usepackage[hmargin={26mm,26mm},vmargin={30mm,35mm}]{geometry}
\usepackage{multirow}
\usepackage{nicefrac}
\usepackage{paralist}
\usepackage[colorlinks, allcolors=blue]{hyperref}
\usepackage{subcaption}
\usepackage[compact,small]{titlesec}
\usepackage{tikz}
\usepackage{tikz-cd}
\usepackage{newtxmath}
\usepackage{newtxtext}


\bibliography{divdivsymm-ser}


\newcommand{\email}[1]{\href{mailto:#1}{#1}}

\allowdisplaybreaks

\numberwithin{equation}{section}




\newtheorem{theorem}{Theorem}
\newtheorem{proposition}[theorem]{Proposition}
\newtheorem{lemma}[theorem]{Lemma}

\theoremstyle{remark}
\newtheorem{remark}[theorem]{Remark}
\theoremstyle{definition}
\newtheorem{assumption}[theorem]{Assumption}

\newcommand{\st}{\,:\,}
\newcommand{\Real}{\mathbb{R}}

\newcommand{\Symm}{\mathbb{S}}

\DeclareRobustCommand{\bvec}[1]{\boldsymbol{#1}}
\pdfstringdefDisableCommands{%
  \renewcommand{\bvec}[1]{#1}%
}

\let\ser\widehat

\newcommand{\svec}[1]{\ser{\bvec{#1}}}
\newcommand{\stens}[1]{\ser{\btens{#1}}}

\newcommand{\uvec}[1]{\underline{\bvec{#1}}}
\newcommand{\suvec}[1]{\underline{\ser{\bvec{#1}}}}
\newcommand{\cvec}[1]{\bvec{\mathcal{#1}}}

\DeclareRobustCommand{\btens}[1]{\boldsymbol{#1}}
\pdfstringdefDisableCommands{%
  \renewcommand{\btens}[1]{#1}%
}

\newcommand{\utens}[1]{\underline{\bvec{#1}}}
\newcommand{\sutens}[1]{\underline{\ser{\bvec{#1}}}}
\newcommand{\ctens}[1]{\bvec{\mathcal{#1}}}

\DeclareMathOperator{\card}{card}

\DeclareMathOperator{\tr}{tr}

\DeclareMathOperator{\GRAD}{\bf grad}
\DeclareMathOperator{\CURL}{\bf curl}
\DeclareMathOperator{\TCURL}{\bf Curl}
\DeclareMathOperator{\DIV}{div}
\DeclareMathOperator{\VDIV}{\bf div}
\DeclareMathOperator{\ROT}{rot}
\DeclareMathOperator{\VROT}{\bf rot}
\DeclareMathOperator{\SYM}{sym}
\DeclareMathOperator{\ANTI}{\bf Anti}
\DeclareMathOperator{\HESS}{\bf hess}
\DeclareMathOperator{\TR}{tr}

\newcommand{\sym}{{\rm sym}}

\newcommand{\RT}[1]{\boldsymbol{\mathcal{RT}}^{#1}}

\newcommand{\Hdivdiv}[2]{\bvec{H}(\DIV\VDIV,#1;#2)}

\newcommand{\compl}{{\rm c}}

\newcommand{\Poly}[2][]{\mathcal{P}_{#1}^{#2}}
\newcommand{\vPoly}[2][]{\cvec{P}_{#1}^{#2}}
\newcommand{\tPoly}[2][]{\ctens{P}_{#1}^{#2}}
\newcommand{\Holy}[1]{\cvec{H}^{#1}}
\newcommand{\cHoly}[1]{\cvec{H}^{\compl,#1}}
\newcommand{\Coly}[1]{\ctens{C}^{#1}}
\newcommand{\cColy}[1]{\ctens{C}^{\compl,#1}}

\newcommand{\lproj}[2]{\pi_{\mathcal{P},#2}^{#1}}
\newcommand{\vlproj}[2]{\bvec{\pi}_{\cvec{P},#2}^{#1}}
\newcommand{\tlproj}[2]{\btens{\pi}_{\ctens{P},#2}^{#1}}
\newcommand{\Hproj}[1]{\btens{\pi}_{\cvec{H},T}^{#1}}
\newcommand{\cHproj}[1]{\btens{\pi}_{\cvec{H},T}^{\compl,#1}}

\newcommand{\RV}[1]{\suvec{R}_{\bvec{V},#1}}
\newcommand{\EV}[1]{\uvec{E}_{\bvec{V},#1}}
\newcommand{\EPT}{\bvec{E}_{\cvec{P},T}^{k-2}}
\newcommand{\RS}[1]{\sutens{R}_{\btens{\Sigma},#1}}
\newcommand{\ES}[1]{\utens{E}_{\btens{\Sigma},#1}}

\newcommand{\SV}{\btens{S}_{\bvec{V},T}^{k-1}}
\newcommand{\SSigma}{\btens{S}_{\btens{\Sigma},T}^{k-1}}


\newcommand{\edges}[1]{\mathcal{E}_{#1}}
\newcommand{\vertices}[1]{\mathcal{V}_{#1}}

\newcommand{\ET}{\edges{T}}
\newcommand{\sET}{\widehat{\mathcal{E}}_T}
\newcommand{\VE}{\vertices{E}}
\newcommand{\VT}{\vertices{T}}

\newcommand{\normal}{\bvec{n}}
\newcommand{\tangent}{\bvec{t}}

\newcommand{\Mh}{\mathcal{M}_h}
\newcommand{\Th}{\mathcal{T}_h}
\newcommand{\Eh}{\mathcal{E}_h}
\newcommand{\Vh}{\mathcal{V}_h}

\DeclareMathOperator{\Ker}{Ker}
\DeclareMathOperator{\Image}{Im}

\newcommand{\norm}[2][]{\|#2\|_{#1}}
\newcommand{\seminorm}[2][]{|#2|_{#1}}
\newcommand{\vvvert}{\vert\kern-0.25ex\vert\kern-0.25ex\vert}
\newcommand{\tnorm}[2][]{\vvvert #2\vvvert_{#1}}

\newcommand{\term}{\mathfrak{T}}

\newcommand{\IV}[2][T]{\uvec{I}_{\bvec{V},#1}^{#2}}
\newcommand{\sIV}[2][T]{\suvec{I}_{\bvec{V},#1}^{#2}}
\newcommand{\ISigma}[2][T]{\utens{I}_{\btens{\Sigma},#1}^{#2}}
\newcommand{\sISigma}[2][T]{\sutens{I}_{\btens{\Sigma},#1}^{#2}}

\newcommand{\dE}[1][E]{\delta_{#1}}

\newcommand{\Csym}[1]{\boldsymbol{\mathsf{C}}_{\sym,T}^{#1}}
\newcommand{\DD}[2][T]{\mathsf{DD}_{#1}^{#2}}
\newcommand{\sDD}[2][T]{\ser{\mathsf{DD}}_{#1}^{#2}}
\newcommand{\uCsym}[2][T]{\utens{C}_{\sym,#1}^{#2}}
\newcommand{\suCsym}[2][T]{\sutens{C}_{\sym,#1}^{#2}}
\newcommand{\PV}[1]{\bvec{P}_{\bvec{V},T}^{#1}}
\newcommand{\PVh}[1]{\bvec{P}_{\bvec{V},h}^{#1}}
\newcommand{\PSigmaT}[1]{\btens{P}_{\btens{\Sigma},T}^{#1}}
\newcommand{\PSigmaE}[1]{P_{\btens{\Sigma},E}^{#1}}

\newcommand{\ErrV}{\mathfrak{E}_{\bvec{V},h}}
\newcommand{\ErrSigma}{\mathfrak{E}_{\btens{\Sigma},h}}
\newcommand{\Errsymcurl}{\mathfrak{E}_{\SYM\CURL,h}}
\newcommand{\Errdivdiv}{\mathfrak{E}_{\DIV\VDIV,h}}

\newcommand{\jump}[1]{[#1]_V}


\begin{document}

\title{A serendipity fully discrete div-div complex on polygonal meshes}
\author[1]{Michele Botti}
\author[2]{Daniele A. Di Pietro}
\author[2]{Marwa Salah}
\affil[1]{MOX, Politecnico di Milano, Italy, \email{michele.botti@polimi.it}}
\affil[2]{IMAG, Univ Montpellier, CNRS, Montpellier, France, \email{daniele.di-pietro@umontpellier.fr}, \email{marwa.salah@etu.umontpellier.fr}}

\maketitle

\begin{abstract}
  In this work we address the reduction of face degrees of freedom (DOFs) for discrete elasticity complexes.
  Specifically, using serendipity techniques, we develop a reduced version of a recently introduced two-dimensional complex arising from traces of the three-dimensional elasticity complex.
  The keystone of the reduction process is a new estimate of symmetric tensor-valued polynomial fields in terms of boundary values, completed with suitable projections of internal values for higher degrees.
  We prove an extensive set of new results for the original complex and show that the reduced complex has the same homological and analytical properties as the original one.
  This paper also contains an appendix with proofs of general Poincar\'e--Korn-type inequalities for hybrid fields.
  \medskip\\
  \textbf{Key words.} Discrete de Rham method, serendipity, compatible discretisations, mixed formulation, div-div complex, biharmonic equation, Kirchhoff--Love plates\medskip\\
  \textbf{MSC2010.} 74K20, 
  74S05, 
  65N30
\end{abstract}


\section{Introduction}

The development of computationally viable discrete elasticity complexes is a long-standing problem in numerical analysis.
Finite element versions of the elasticity complex typically require a large number of degrees of freedom (DOFs) to deal with the symmetry constraint on tensor-valued fields \cite{Arnold.Winther:02,Arnold.Awanou:05,Arnold.Falk.ea:06,Arnold.Awanou.ea:08,Chen.Huang:22, Sky.Muench.ea:22}.
Particularly critical are DOFs attached to mesh faces, that cannot be efficiently eliminated via static condensation.
In this work, we study DOFs reduction through \emph{serendipity}.
Serendipity techniques exploit the information on the boundary to fix the values of (a subset of) internal DOFs while preserving polynomial consistency.
When working with discrete complexes, this reduction must be carefully designed in order to preserve key properties of the original complex.
\smallskip

With face DOFs reduction in mind, we focus on the two-dimensional \emph{div-div complex} \cite{Chen.Hu.ea:18} that arises when considering traces for the three-dimensional elasticity complex on polyhedra (see \cite[Section 3.4]{Chen.Huang:22}).
Specifically, denoting by $\Omega\subset\Real^2$ a bounded connected polygonal set and by $\Symm$ the set of symmetric $2\times 2$ matrices, this complex reads:
\begin{equation}\label{eq:continuous.complex}
  \begin{tikzcd}
    \RT{1}(\Omega)
    \arrow[r,hook] & \bvec{H}^1(\Omega;\Real^2)
    \arrow{r}[above=2pt]{\SYM\CURL} & \Hdivdiv{\Omega}{\Symm}
    \arrow{r}[above=2pt]{\DIV\VDIV} & L^2(\Omega)
    \arrow{r}[above=2pt]{0} & 0,
  \end{tikzcd}
\end{equation}
where ``$\SYM$'' denotes the symmetric part of a space or an operator,
$\RT{1}(\Omega) \coloneq \vPoly{0}(\Omega) + \bvec{x}\Poly{0}(\Omega)$ is the lowest-order Raviart--Thomas space \cite{Raviart.Thomas:77},
and a definition of the $\SYM\CURL$ and $\DIV\VDIV$ operators in Cartesian coordinates is given in \eqref{eq:2d.differential.operators} below.
A discrete version of the complex \eqref{eq:continuous.complex} has been recently obtained in \cite{Di-Pietro.Droniou:22*1} following the \emph{discrete de Rham} (DDR) paradigm \cite{Di-Pietro.Droniou.ea:20,Di-Pietro.Droniou:21*1}.
A salient feature of DDR constructions is the native support of general polygonal/polyhedral meshes, which simplifies the discretisation of complex domain geometries and/or the capture of fine-scale features of the solution.
Alternative approaches to the use of polygonal/polyhedral meshes in the finite element framework include the fictitious domain method popularised by the work of Glowinski and coauthors; see, e.g., \cite{Glowinski.Pan.ea:94,Girault.Glowinski:95}.
In this work, following the abstract framework of \cite{Di-Pietro.Droniou:22} (closely inspired, through the bridges constructed in \cite{Beirao-da-Veiga.Dassi.ea:22}, by the ideas originally developed in \cite{Beirao-da-Veiga.Brezzi.ea:16*3,Beirao-da-Veiga.Brezzi.ea:17,Beirao-da-Veiga.Brezzi.ea:18}), we derive a reduced version of the DDR complex of \cite{Di-Pietro.Droniou:22*1} that preserves both its homological and analytical properties.
The keystone of this reduced version is the estimate of tensor-valued polynomials established in Lemma \ref{lem:estimate.symmetric.poly} below, which provides indications on which DOFs can be discarded while preserving polynomial consistency.
A comparison of the number of DOFs between the full and serendipity div-div complexes for various element shapes is provided in Table \ref{tab:dofs.reduction}, showing gains between 13\% and 27\% for the considered polynomial degrees $k$ and element shapes.

\begin{table}\centering
\begin{tabular}{c|cccc}
\toprule
Discrete space & $k=3$ & $k=4$ & $k=5$ & $k=6$ \\
\midrule
\multicolumn{ 5 }{c}{ Triangle, $\eta_T =  3 $ } \\
\midrule
$\bvec{H}^1(\Omega;\Real^2)$
& 24 \textbullet{} 20 (-17\%)& 36 \textbullet{} 30 (-17\%)& 50 \textbullet{} 42 (-16\%)& 66 \textbullet{} 56 (-15\%) \\
$\Hdivdiv{\Omega}{\Symm}$
& 24 \textbullet{} 20 (-17\%)& 39 \textbullet{} 33 (-15\%)& 57 \textbullet{} 49 (-14\%)& 78 \textbullet{} 68 (-13\%) \\
\midrule
\multicolumn{ 5 }{c}{ Quadrangle, $\eta_T =  4 $ } \\
\midrule
$\bvec{H}^1(\Omega;\Real^2)$
& 30 \textbullet{} 24 (-20\%)& 44 \textbullet{} 34 (-23\%)& 60 \textbullet{} 46 (-23\%)& 78 \textbullet{} 60 (-23\%) \\
$\Hdivdiv{\Omega}{\Symm}$
& 30 \textbullet{} 24 (-20\%)& 47 \textbullet{} 37 (-21\%)& 67 \textbullet{} 53 (-21\%)& 90 \textbullet{} 72 (-20\%) \\
\midrule
\multicolumn{ 5 }{c}{ Pentagon, $\eta_T =  5 $ } \\
\midrule
$\bvec{H}^1(\Omega;\Real^2)$
& 36 \textbullet{} 30 (-17\%)& 52 \textbullet{} 40 (-23\%)& 70 \textbullet{} 52 (-26\%)& 90 \textbullet{} 66 (-27\%) \\
$\Hdivdiv{\Omega}{\Symm}$
& 36 \textbullet{} 30 (-17\%)& 55 \textbullet{} 43 (-22\%)& 77 \textbullet{} 59 (-23\%)& 102 \textbullet{} 78 (-24\%) \\
\bottomrule
\end{tabular}
\caption{Number of DOFs for the full \textbullet{} serendipity discrete counterparts of the spaces $\bvec{H}^1(\Omega;\Real^2)$ and $\Hdivdiv{\Omega}{\Symm}$ on a triangle, quadrangle, and pentagon element $T$ for polynomial degrees $k$ ranging from 3 to 6. The relative DOFs reduction is in parenthesis. The parameter $\eta_T$ is defined in Assumption \ref{assum:choice.detaP} below. \label{tab:dofs.reduction}}
\end{table}

The rest of this work is organised as follows.
In Section \ref{sec:setting} we briefly recall the general setting.
The construction underlying the full DDR div-div complex is briefly recalled in Section \ref{sec:complex}, where we also prove a complete set of analytical results (Poincar\'e inequalities, consistency, and adjoint consistency) that complement the ones established in \cite{Di-Pietro.Droniou:22*1}.
The serendipity version of the DDR div-div complex is derived in Section \ref{sec:serendipity.complex}.
Through the sufficient conditions identified in \cite{Di-Pietro.Droniou:22*1}, we establish, in Theorems \ref{thm:homological.properties} and \ref{thm:analytical.properties} below, that the serendipity and full complexes have analogous homological and analytical properties.
Finally, Appendix \ref{appendix} focuses on Poincar\'e--Korn type inequalities for hybrid vector fields that are instrumental for the previous analysis.


\section{Setting}\label{sec:setting}

\subsection{Two-dimensional vector calculus operators}

Consider the real plane $\Real^2$ endowed with the Cartesian coordinate system $(x_1,x_2)$, and denote by $\partial_i$ the partial derivative with respect to the $i$th coordinate.
We need the following two-dimensional differential operators acting on smooth enough
scalar-valued fields $q$,
vector-valued fields $\bvec{v}=\begin{pmatrix}v_1\\v_2\end{pmatrix}$,
or matrix-valued fields $\btens{\tau}=\begin{pmatrix}\tau_{11} & \tau_{12}\\ \tau_{21} & \tau_{22}\end{pmatrix}$:
\begin{equation}\label{eq:2d.differential.operators}
  \begin{gathered}
    \CURL q\coloneq\begin{pmatrix}\partial_2 q\\ -\partial_1 q\end{pmatrix}, \quad
    \ROT \bvec{v} \coloneq \partial_2 v_1 - \partial_1 v_2,
    \\
    \DIV\bvec{v}\coloneq\partial_1 v_1 + \partial_2 v_2,\quad
    \GRAD\bvec{v}\coloneq\begin{pmatrix}
    \partial_1 v_1 & \partial_2 v_1 \\
    \partial_1 v_2 & \partial_2 v_2
    \end{pmatrix},\quad
    \SYM\CURL\bvec{v}\coloneq\begin{pmatrix}
    \partial_2 v_1 & \frac{-\partial_1 v_1+\partial_2 v_2}{2} \\
    \frac{-\partial_1 v_1+\partial_2 v_2}{2} & -\partial_1 v_2
    \end{pmatrix},
    \\
    \VDIV\btens{\tau}\coloneq\begin{pmatrix}
    \partial_1\tau_{11} + \partial_2\tau_{12}
    \\
    \partial_1\tau_{21} + \partial_2\tau_{22}
    \end{pmatrix},\quad
    \VROT\btens{\tau}\coloneq\begin{pmatrix}
    \partial_2\tau_{11} - \partial_1\tau_{12} \\
    \partial_2\tau_{21} - \partial_1\tau_{22}
    \end{pmatrix}.
  \end{gathered}
\end{equation}
Defining the fourth-order tensor $\mathbb{C}$ such that
\begin{equation}\label{eq:def.tensor.C}
  \mathbb{C}\btens{\tau}=\begin{pmatrix}\tau_{12} & \frac{-\tau_{11}+\tau_{22}}{2}\\ \frac{-\tau_{11}+\tau_{22}}{2} & -\tau_{21}\end{pmatrix}\qquad\forall \btens{\tau}=\begin{pmatrix}\tau_{11} & \tau_{12}\\ \tau_{21} & \tau_{22}\end{pmatrix}\in\Real^{2\times 2},
\end{equation}
we have $\SYM\CURL\bvec{v} = \mathbb{C}\GRAD\bvec{v}$.

\subsection{Mesh and notation for inequalities up to a constant}\label{sec:setting:mesh}

We denote by $\Mh = \Th\cup\Eh\cup\Vh$ a polygonal mesh of $\Omega$ in the usual sense of \cite{Di-Pietro.Droniou:20}, with $\Th$, $\Eh$, and $\Vh$ collecting, respectively, the elements, edges, and vertices and $h$ denoting the meshsize.
For all $Y\in\Mh$, we let $h_Y$ denote its diameter so that, in particular, $h=\max_{T\in\Th}h_T$.
$\Mh$ is assumed to belong to a refined mesh sequence with regularity parameter bounded away from zero.
We additionally assume that each element $T\in\Th$ is contractible and denote by $\bvec{x}_T$ a point inside $T$ such that there exists a disk contained in $T$ centered in $\bvec{x}_T$ and of diameter comparable to $h_T$ uniformly in $h$.
The sets of edges and vertices of $T$ are denoted by $\ET$ and $\VT$, respectively.
By mesh regularity, the number of edges (and vertices) of mesh elements are bounded uniformly in $h$.
For each edge $E\in\Eh$, we denote by $\VE$ the set of vertices corresponding to its endpoints and fix an orientation by prescribing a unit tangent vector $\tangent_E$.
This orientation determines two numbers $(\omega_{EV})_{V\in\VE}$ in $\{-1,+1\}$ such that $\omega_{EV}=+1$ whenever $\tangent_E$ points towards $V$.
The corresponding unit normal vector $\normal_E$ is selected so that $(\tangent_E,\normal_E)$ forms a right-handed system of coordinates, and, for each $T\in\Th$ such that $E\in\ET$, we denote by $\omega_{TE}\in\{-1,+1\}$ the orientation of $E$ relative to $T$, defined so that $\omega_{TE}\normal_E$ points out of $T$.

From this point on, $a \lesssim b$ means $a\le Cb$ with $C$ only depending on $\Omega$, the mesh regularity parameter, and the polynomial degree $k$ of the complex (see \eqref{eq:k} below).
We also write $a\simeq b$ as a shorthand for ``$a\lesssim b$ and $b\lesssim a$''.

\subsection{Polynomial spaces}

Given $Y\in\Mh$ and an integer $m\ge 0$, we denote by $\Poly{m}(Y)$ the space spanned by the restriction to $Y$ of two-variate polynomials of total degree $\le m$,  with the additional convention that $\Poly{-1}(Y)=\{0\}$.
The symbols $\vPoly{m}(Y;\Real^2)$ and $\tPoly{m}(Y;\Symm)$ denote, respectively, vector-valued and symmetric tensor-valued functions over $Y$ whose components are in $\Poly{m}(Y)$.
Finally, for each $T\in\Th$, we denote by $\Poly{m}(\ET)$ the space of broken polynomials of total degree $\le m$ on $\ET$.
Vector and tensor versions of this space are denoted in boldface and the codomain is specified.

Denoting by $\SYM\btens{\tau}=\frac{\btens{\tau}+\btens{\tau}^\top}{2}$ the symmetrisation operator, the following decompositions hold:
\begin{equation*}
  \begin{gathered}
    \tPoly{m}(T;\Symm)
    = \Holy{m}(T) \oplus\cHoly{m}(T)
    \\
    \text{
      with $\Holy{m}(T)\coloneq\HESS\Poly{m+2}(T)$ and
      $\cHoly{m}(T)\coloneq\SYM\big(
      (\bvec{x} - \bvec{x}_T)^\bot\otimes\vPoly{m-1}(T;\Real^2)
      \big)$,
    }
  \end{gathered}
\end{equation*}
and
\begin{equation}\label{eq:vPoly=Coly.oplys.cColy}
  \begin{gathered}
    \tPoly{m}(T;\Symm)
    = \Coly{m}(T) \oplus \cColy{m}(T)
    \\
    \text{
      with $\Coly{m}(T)\coloneq\SYM\CURL\vPoly{m+1}(T;\Real^2)$ and
      $\cColy{m}(T)\coloneq(\bvec{x} - \bvec{x}_T)(\bvec{x} - \bvec{x}_T)^\top\Poly{m-2}(T)$.
    }
  \end{gathered}
\end{equation}

The following result will be needed in the analysis.
\begin{proposition}[Continuity of the inverses of local isomorphisms]\label{prop:continuity.inverses}
  Let $m\ge 1$ and denote by $\Poly{m\perp 1}(T)$ the subspace of $\Poly{m}(T)$ spanned by polynomials that are $L^2$-orthogonal to $\Poly{1}(T)$.
  Then,
  $\VROT:\cHoly{m}(T)\to\vPoly{m-1}(T;\Real^2)$, and, if $m\ge 2$,
  $\DIV\VDIV:\cColy{m}(T)\to\Poly{m-2}(T)$
  and $\HESS:\Poly{m\perp 1}(T)\mapsto\Holy{m-2}(T)$
  are isomorphisms with continuous inverse, i.e.,
  \begin{alignat}{4}\label{eq:est.vrot.-1}
    \norm[\btens{L}^2(T;\Real^{2\times 2})]{\btens{\upsilon}}
    &\lesssim
    h_T \norm[\bvec{L}^2(T;\Real^2)]{\VROT\btens{\upsilon}}
    &\qquad&\forall\btens{\upsilon}\in\cHoly{m}(T),
    \\
    \label{eq:est.div.div.-1}
    \norm[\btens{L}^2(T;\Real^{2\times 2})]{\btens{\upsilon}}
    &\lesssim
    h_T^2 \norm[L^2(T)]{\DIV\VDIV\btens{\upsilon}}
    &\qquad&\forall\btens{\upsilon}\in\cColy{m}(T),
    \\
    \label{eq:est.hess.-1}
    \norm[L^2(T)]{q}
    &\lesssim
    h_T^2\norm[\btens{L}^2(T;\Real^{2\times 2})]{\HESS q}
    &\qquad&\forall q\in\Poly{m\perp 1}(T).
  \end{alignat}
\end{proposition}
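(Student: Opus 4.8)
I would prove the three estimates one by one, each time first settling the algebraic bijectivity and then reducing the analytic bound to a scale-invariant estimate on a ball concentric with $T$, where everything decouples along the grading in homogeneous powers of $\bvec y\coloneq\bvec x-\bvec x_T$.

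For bijectivity, the starting point is that $\VROT\HESS=\bvec 0$ and $\DIV\VDIV\SYM\CURL=\bvec 0$ together with the exactness of the local polynomial Hessian and div-div complexes (see, e.g., \cite{Chen.Huang:22,Di-Pietro.Droniou:22*1}), which identify $\Ker(\VROT|_{\tPoly{m}(T;\Symm)})=\Holy{m}(T)$ and $\Ker(\DIV\VDIV|_{\tPoly{m}(T;\Symm)})=\Coly{m}(T)$. Since the decompositions recalled above are direct, $\VROT$ is injective on $\cHoly{m}(T)$ and $\DIV\VDIV$ is injective on $\cColy{m}(T)$. As the maps $\bvec w\mapsto\SYM((\bvec x-\bvec x_T)^\bot\otimes\bvec w)$ and $p\mapsto(\bvec x-\bvec x_T)(\bvec x-\bvec x_T)^\top p$ are themselves injective, $\DIM\cHoly{m}(T)=\DIM\vPoly{m-1}(T;\Real^2)$ and $\DIM\cColy{m}(T)=\DIM\Poly{m-2}(T)$; since $\VROT$ and $\DIV\VDIV$ lower the polynomial degree by one and two respectively, their images land in the claimed codomains, so injectivity plus equality of dimensions yields bijectivity. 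Alternatively and more explicitly, writing $\btens\upsilon=(\bvec x-\bvec x_T)(\bvec x-\bvec x_T)^\top p$ and expanding, the Euler identities $\bvec y\cdot\GRAD p_d=d\,p_d$ and $\sum_{i,j}y_iy_j\,\partial_{ij}p_d=d(d-1)p_d$ for the $\bvec y$-homogeneous component $p_d$ of $p$ give $\DIV\VDIV\btens\upsilon=\sum_d (d+2)(d+3)\,p_d$, which exhibits $\DIV\VDIV:\cColy{m}(T)\to\Poly{m-2}(T)$ as a bijection directly (and with inverse bounded by $\tfrac16$ on each homogeneous block). The $\HESS$ case is immediate: $\Holy{m-2}(T)=\HESS\Poly{m}(T)$ by definition, $\Ker(\HESS|_{\Poly{m}(T)})=\Poly{1}(T)$, and $\Poly{m\perp 1}(T)$ is by construction a complement of $\Poly{1}(T)$ in $\Poly{m}(T)$.

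For the estimates, mesh regularity supplies a ball $B_T\coloneq B(\bvec x_T,\rho h_T)$ with $B_T\subseteq T\subseteq B(\bvec x_T,h_T)$ and $\rho$ depending only on the regularity parameter, and the standard $L^2$-equivalence of polynomials of degree $\le k+2$ on concentric balls and on $T$ (constants depending only on the regularity parameter and $k$, see \cite{Di-Pietro.Droniou:20}) lets one replace every $L^2(T)$-norm by an $L^2(B_T)$-norm. On the centred ball $B_T$, polynomials that are $\bvec y$-homogeneous of distinct degrees are mutually $L^2$-orthogonal, and the constant-coefficient operators $\VROT$, $\DIV\VDIV$, $\HESS$ respect this grading. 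For \eqref{eq:est.div.div.-1}, decomposing $\btens\upsilon=\sum_d(\bvec x-\bvec x_T)(\bvec x-\bvec x_T)^\top p_d$ and using $\norm[\btens L^2(B_T)]{(\bvec x-\bvec x_T)(\bvec x-\bvec x_T)^\top p_d}\simeq h_T^2\norm[L^2(B_T)]{p_d}$ together with $\DIV\VDIV\btens\upsilon=\sum_d(d+2)(d+3)p_d$, the Pythagorean identity gives $\norm[\btens L^2(B_T)]{\btens\upsilon}\simeq h_T^2\norm[L^2(B_T)]{p}\lesssim h_T^2\norm[L^2(B_T)]{\DIV\VDIV\btens\upsilon}$. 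Estimate \eqref{eq:est.vrot.-1} follows by the same scheme: on each homogeneous block $\bvec w_d\mapsto\SYM((\bvec x-\bvec x_T)^\bot\otimes\bvec w_d)$ is injective with $\norm[\btens L^2(B_T)]{\SYM((\bvec x-\bvec x_T)^\bot\otimes\bvec w_d)}\simeq h_T\norm[\bvec L^2(B_T)]{\bvec w_d}$, and $\VROT$ maps this block bijectively onto the $\bvec y$-homogeneous degree-$d$ vector polynomials with inverse bounded by a constant depending only on $d$, so Pythagoras yields $\norm[\btens L^2(B_T)]{\btens\upsilon}\lesssim h_T\norm[\bvec L^2(B_T)]{\VROT\btens\upsilon}$. (Equivalently, one may rescale $B_T$ to the unit ball, on which $\VROT$ and $\DIV\VDIV$ become fixed injective maps with bounded inverse, the rescaling producing exactly the powers $h_T$ and $h_T^2$.)

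The $\HESS$ estimate \eqref{eq:est.hess.-1} needs one extra step, because $\Poly{m\perp 1}(T)$ is defined through $L^2(T)$ and not $L^2(B_T)$. Splitting $q=q_{\le1}+q_{\ge2}$ into its $\bvec y$-homogeneous parts of degree $\le1$ and $\ge2$, one has $\HESS q=\HESS q_{\ge2}$, while $\lproj{1}{T}q=0$ forces $q_{\le1}=-\lproj{1}{T}q_{\ge2}$, whence $\norm[L^2(T)]{q}\le2\norm[L^2(T)]{q_{\ge2}}$ by the triangle inequality and the contractivity of the $L^2(T)$-orthogonal projection $\lproj{1}{T}$ onto $\Poly{1}(T)$. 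Passing to $B_T$ and using that, on each $\bvec y$-homogeneous block of degree $d\ge2$, $\HESS$ is injective with $\norm[L^2(B_T)]{q_d}\simeq h_T^2\norm[\btens L^2(B_T)]{\HESS q_d}$, Pythagoras gives $\norm[L^2(B_T)]{q_{\ge2}}\lesssim h_T^2\norm[\btens L^2(B_T)]{\HESS q}$, and \eqref{eq:est.hess.-1} follows after transferring back to $T$. The main difficulty I anticipate is not conceptual but bookkeeping: tracking correctly the powers of $h_T$ produced by $(\bvec x-\bvec x_T)$, $(\bvec x-\bvec x_T)(\bvec x-\bvec x_T)^\top$ and $(\bvec x-\bvec x_T)^\bot$ under the homogeneity scaling, and — for the bijectivity statements — pinning down the right exactness result for the local polynomial complexes; once these are settled, each remaining estimate is finite-dimensional with constant depending only on $k$.
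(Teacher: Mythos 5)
Your overall strategy is the one the paper actually has in mind: its proof is a one-line reference to a scaling argument (as in the DDR reference), i.e.\ translate and rescale to a fixed-size ball guaranteed by mesh regularity, use equivalence of polynomial $L^2$-norms on $T$ and on that ball, and conclude by finite-dimensional injectivity of the (fixed, degree-bounded) maps $\VROT$, $\DIV\VDIV$, $\HESS$ on the complement spaces, the rescaling producing exactly the factors $h_T$ and $h_T^2$. Your additional material on bijectivity (kernel identification via the local polynomial complexes, dimension counts, and the explicit Euler-identity computation $\DIV\VDIV\big((\bvec{x}-\bvec{x}_T)(\bvec{x}-\bvec{x}_T)^\top p_d\big)=(d+2)(d+3)p_d$, which I checked) is correct and is a nice explicit supplement to what the paper leaves implicit, as is the reduction of the $L^2(T)$-orthogonality constraint in \eqref{eq:est.hess.-1} to the homogeneous splitting $q=q_{\le 1}+q_{\ge 2}$ with $\norm[L^2(T)]{q}\le 2\norm[L^2(T)]{q_{\ge 2}}$.

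There is, however, one step that is genuinely false as written: polynomials that are $\bvec{y}$-homogeneous of distinct degrees are \emph{not} mutually $L^2$-orthogonal on a ball centred at $\bvec{x}_T$. In polar coordinates the inner product factorises as a radial integral (which never vanishes) times the $L^2(S^1)$ inner product of the angular traces, and homogeneous polynomials of degrees of equal parity can share angular harmonics: e.g.\ $p_0=1$ and $p_2=|\bvec{y}|^2$ (or $y_1^2$) satisfy $\int_{B_T}p_0\,p_2>0$. Consequently every ``Pythagorean identity'' you invoke across homogeneous blocks (for \eqref{eq:est.vrot.-1}, \eqref{eq:est.div.div.-1} and for $\norm[L^2(B_T)]{q_{\ge 2}}\lesssim h_T^2\norm[\btens{L}^2(B_T;\Real^{2\times2})]{\HESS q}$) is not justified. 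The damage is local and repairable in two ways: either replace Pythagoras by a triangle inequality together with the $L^2(B_T)$-boundedness, uniform after rescaling, of the projections onto homogeneous components (a finite-dimensional fact, constants depending only on the degree), or simply drop the blockwise bookkeeping altogether and use the argument you yourself give in parentheses — rescale $B_T$ to the unit ball, where each operator is a fixed injective linear map on a fixed finite-dimensional space, hence has a bounded inverse, and track the powers of $h_T$ produced by the rescaling (this also covers the $\HESS$ case, since $\HESS$ is injective on the span of the homogeneous components of degree $\ge 2$). With that correction your proof is complete and coincides in substance with the scaling argument the paper cites.
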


\begin{proof}
  The proof hinges on a scaling argument analogous to the one used in \cite[Lemma 9]{Di-Pietro.Droniou:21*1}, not repeated here for the sake of brevity.
\end{proof}

Given a polynomial (sub)space $\mathcal{X}^m(Y)$ on $Y\in\Mh$, the corresponding $L^2$-orthogonal projector is denoted by $\pi_{\mathcal{X},Y}^m$.
Boldface fonts will be used when the elements of $\mathcal{X}^m(Y)$ are tensor- or vector-valued.


\section{Full spaces and reconstructions}\label{sec:complex}

In this section we briefly recall the discrete div-div complex of \cite{Di-Pietro.Droniou:22*1}, for which we prove a complete panel of properties including Poincaré inequalities, consistency, and adjoint consistency results that complement the ones established in the previous reference.

\subsection{Spaces}

Throughout the rest of the paper, the integer
\begin{equation}\label{eq:k}
  k\ge 3
\end{equation}
will denote the polynomial degree of the discrete complex.
The discrete counterparts of the spaces $\bvec{H}^1(\Omega;\Real^2)$ and $\Hdivdiv{\Omega}{\Symm}$ are, respectively,
\begin{align}\label{eq:VT}
  \uvec{V}_h^k&\coloneq\Big\{
  \begin{aligned}[t]
    &\uvec{v}_h=
    \big(
    (\bvec{v}_T)_{T\in\Th},
    (\bvec{v}_E)_{E\in\Eh},
    (\bvec{v}_V, \btens{G}_{\bvec{v},V})_{V\in\Vh}
    \big)\st
    \\
    &\qquad\text{
      $\bvec{v}_T\in\vPoly{k-2}(T;\Real^2)$
      for all $T\in\Th$,
    }
    \\
    &\qquad\text{
      $\bvec{v}_E\in\vPoly{k-4}(E;\Real^2)$ for all $E\in\Eh$,
    }
    \\
    &\qquad\text{      
      $\bvec{v}_V\in\Real^2$
      and $\btens{G}_{\bvec{v},V}\in\Real^{2\times 2}$
      for all $V\in\Vh$
    }
    \Big\},
  \end{aligned}
  \\\label{eq:SigmaT}
  \utens{\Sigma}_h^{k-1}&\coloneq\Big\{
  \begin{aligned}[t]
    &\utens{\tau}_h
    =\big(
    (\btens{\tau}_{\cvec{H},T}, \btens{\tau}_{\cvec{H},T}^\compl)_{T\in\Th},
    (\tau_E,D_{\btens{\tau},E})_{E\in\Eh},
    (\btens{\tau}_V)_{V\in\Vh}
    \big)\st
    \\
    &\qquad\text{
      $\btens{\tau}_{\cvec{H},T}\in\Holy{k-4}(T)$
      and $\btens{\tau}_{\cvec{H},T}^\compl\in\cHoly{k-1}(T)$
      for all $T\in\Th$,        
    }
    \\
    &\qquad\text{
      $\tau_E\in\Poly{k-3}(E)$
      and $D_{\btens{\tau},E}\in\Poly{k-2}(E)$ for all $E\in\Eh$,
    }
    \\
    &\qquad\text{
      $\btens{\tau}_V\in\Symm$ for all $V\in\Vh$
    }
    \Big\}.
  \end{aligned}
\end{align}
The interpolators 
$\IV[h]{k}:\bvec{C}^1(\overline{\Omega};\Real^2)\to\uvec{V}_h^k$ and
$\ISigma[h]{k-1}:\btens{H}^2(\Omega;\Symm)\to\utens{\Sigma}_h^{k-1}$ are
such that, for all $\bvec{v}\in\bvec{C}^1(\overline{\Omega};\Real^2)$
and all $\btens{\tau}\in\btens{H}^2(\Omega;\Symm)$,
\begin{align*}
  \IV[h]{k}\bvec{v}&\coloneq
  \Big(
  (\bvec{\pi}_{\cvec{P},T}^{k-2}\bvec{v}_{|T})_{T\in\Th},
  (\bvec{\pi}_{\cvec{P},E}^{k-4}\bvec{v}_{|E})_{E\in\ET},
  \big(\bvec{v}(\bvec{x}_V),\GRAD\bvec{v}(\bvec{x}_V)\big)_{V\in\VT}
  \Big),
  \\ \label{eq:ISigmaT}
  \ISigma[h]{k-1}\btens{\tau}&\coloneq
  \Big(
  \big(\btens{\pi}_{\cvec{H},T}^{k-4}\btens{\tau}_{|T},
  \btens{\pi}_{\cvec{H},T}^{\compl,k-1}\btens{\tau}_{|T}
  \big)_{T\in\Th},
  \big(
  \pi_{\cvec{P},E}^{k-3}(\btens{\tau}_{|E}\normal_E\cdot\normal_E),
    \pi_{\cvec{P},E}^{k-2}\dE\btens{\tau}
    \big)_{E\in\ET},
    \big(\btens{\tau}(\bvec{x}_V)\big)_{V\in\VT}
    \Big),
\end{align*}
where $\bvec{x}_V$ denotes the coordinate vector of the vertex $V\in\VT$
while, for all $E\in\ET$, $\partial_{\tangent_E}$ denotes the derivative along the edge $E$ in the direction of $\tangent_E$ and we have set, for the sake of conciseness,
\[
\dE\btens{\tau}
\coloneq
\partial_{\tangent_E}(\btens{\tau}_{|E}\normal_E\cdot\tangent_E)
+ (\VDIV\btens{\tau})_{|E}\cdot\normal_E.
\]
As customary for DDR methods, we denote the restrictions of spaces and operators to a mesh element or edge $Y\in\Th\cup\Eh$ by replacing the subscript ``$h$'' with ``$Y$''.
Such restrictions are obtained collecting the polynomial components on $Y$ and its boundary.
Given $T\in\Th$, for $\uvec{V}_h^k$ we will also need its restriction $\uvec{V}_{\partial T}^k$ to the boundary of $T$, obtained collecting all the polynomial components that lie thereon.

\subsection{Reconstructions}\label{sec:reconstructions}

Let a mesh element $T\in\Th$ be fixed.
The DDR method hinges on the reconstructions of differential operators and of the corresponding potentials described below.

\subsubsection{Symmetric curl and vector potential}

The key integration by parts formula to reconstruct discrete counterparts of the symmetric curl and of the corresponding vector potential is the following:
For any $\bvec{v}:T\to\Real^2$ and any $\btens{\tau}:T\to\Symm$ smooth enough,
\begin{equation}\label{eq:ibp.VT}
  \int_T\bvec{v}\cdot\VROT\btens{\tau}
  = -\int_T\SYM\CURL\bvec{v}:\btens{\tau}
  + \sum_{E\in\ET}\omega_{TE}\int_{E}\bvec{v}\cdot(\btens{\tau}\,\bvec{t}_E).
\end{equation}
The \emph{full symmetric curl} $\Csym{k-1}:\uvec{V}_T^k\to\tPoly{k-1}(T;\Symm)$ is such that, for all $\uvec{v}_T\in\uvec{V}_T^k$,
\begin{equation}\label{eq:CsymT}
  \int_T\Csym{k-1}\uvec{v}_T:\btens{\tau}_T
  = -\int_T\bvec{v}_T\cdot\VROT\btens{\tau}_T
  + \sum_{E\in\ET}\omega_{TE}\int_E\bvec{v}_{\ET}\cdot(\btens{\tau}_T\,\bvec{t}_E)\qquad
  \forall\btens{\tau}_T\in\tPoly{k-1}(T;\Symm),
\end{equation}
where $\bvec{v}_{\ET}\in\vPoly{k}(\ET;\Real^2)\cap\bvec{C}^0(\partial T;\Real^2)$ is uniquely defined by the following conditions:
\begin{equation}\label{eq:v.ET}
  \begin{gathered}
    \text{
      $\bvec{\pi}_{\cvec{P},E}^{k-4}(\bvec{v}_{\ET})_{|E} = \bvec{v}_E$ for all $E\in\ET$,
      $\quad\partial_{\tangent_E}(\bvec{v}_{\ET})_{|E}(\bvec{x}_V)=\btens{G}_{\bvec{v},V}\tangent_E$
      for all $E\in\ET$ and $V\in\VE$,
    }
    \\
    \text{
      and $\bvec{v}_{\ET}(\bvec{x}_V) = \bvec{v}_V$ for all $V\in\VT$.
    }
  \end{gathered}
\end{equation}
The \emph{discrete symmetric curl} $\uCsym{k-1}:\uvec{V}_T^k\to\utens{\Sigma}_T^{k-1}$, acting between the discrete spaces in the com\-plex, is obtained setting, for all $\uvec{v}_T\in\uvec{V}_T^k$,
\begin{equation}\label{eq:uCsymT}
  \uCsym{k-1}\uvec{v}_T\coloneq\Big(
  \begin{aligned}[t]
    &\btens{\pi}_{\ctens{H},T}^{k-4}\big(\Csym{k-1}\uvec{v}_T\big),
    \btens{\pi}_{\ctens{H},T}^{\compl,k-1}\big(\Csym{k-1}\uvec{v}_T\big),
    \\
    &\big(
    \pi_{\mathcal{P},E}^{k-3}(\partial_{\tangent_E}\bvec{v}_{\ET}\cdot\normal_E),
    \partial_{\tangent_E}^2\bvec{v}_{\ET}\cdot\tangent_E
    \big)_{E\in\ET},
    \\
    &\big(
    \mathbb{C}\btens{G}_{\bvec{v},V}
    \big)_{V\in\VT}
    \Big),
  \end{aligned}
\end{equation}
with $\mathbb{C}$ as in \eqref{eq:def.tensor.C}. The \emph{global symmetric curl operator} $\uCsym[h]{k-1}:\uvec{V}_h^k\to\utens{\Sigma}_h^{k-1}$ is such that, for all $\uvec{v}_h\in\uvec{V}_h^k$,
\[
(\uCsym[h]{k-1}\uvec{v}_h)_{|T} =
\uCsym{k-1}\uvec{v}_T\qquad
\forall T\in\Th.
\]
Notice that this definition makes sense since the discrete curl components at vertices and edges are single-valued.
The \emph{vector potential} $\PV{k}:\uvec{V}_T^k\to\vPoly{k}(T;\Real^2)$ is such that, for all $\uvec{v}_T\in\uvec{V}_T^k$,
\begin{equation}\label{eq:PVT}
  \int_T\PV{k}\uvec{v}_T\cdot\VROT\btens{\tau}_T
  = -\int_T\Csym{k-1}\uvec{v}_T:\btens{\tau}_T
  + \sum_{E\in\ET}\omega_{TE}\int_E\bvec{v}_{\ET}\cdot(\btens{\tau}_T\,\tangent_E)\qquad
  \forall\btens{\tau}_T\in\cHoly{k+1}(T).
\end{equation}
We recall the following polynomial consistency property from \cite{Di-Pietro.Droniou:22*1}:
\begin{equation}\label{eq:PVT.consistency}
  \PV{T}\IV{k}\bvec{v} = \bvec{v}
  \qquad
  \forall \bvec{v}\in\vPoly{k}(T;\Real^2).
\end{equation}

\begin{remark}[Validity of \eqref{eq:PVT}]\label{rem:validity.PVT}
  Relation \eqref{eq:PVT} remains valid for all $\btens{\tau}_T\in\Holy{k-1}(T)\oplus\cHoly{k+1}(T)$, as can be checked taking $\btens{\tau}_T = \HESS q_T$ with $q_T\in\Poly{k+1}(T)$ and noticing that both sides vanish (use $\VROT\HESS = \bvec{0}$ for the left-hand side and the definition \eqref{eq:CsymT} of $\Csym{k-1}$ with $\btens{\tau}_T = \HESS q_T$ along with $\VROT\HESS = \bvec{0}$ for the right-hand side).
  This implies, in particular, that \eqref{eq:PVT} holds for all $\btens{\tau}_T\in\tPoly{k-1}(T;\Symm)\subset\Holy{k-1}(T)\oplus\cHoly{k+1}(T)$.
\end{remark}

\subsubsection{Div-div and tensor potential}

The starting point for reconstructions in $\utens{\Sigma}_T^{k-1}$ is the following integration by parts formula, corresponding to \cite[Eq.~(2.4)]{Comodi:89} (see also \cite[Eq.~(2)]{Chen.Huang:20}) and valid for all tensor-valued functions $\bvec{\tau}:T\to\Symm$ and all scalar-valued functions $q:T\to\Real$ smooth enough:
\begin{equation}\label{eq:ibp.SigmaT}
  \begin{aligned}
    \int_T\DIV\VDIV\btens{\tau}~q
    &= \int_T\btens{\tau}:\HESS q
    -\sum_{E\in\ET}\omega_{TE}\left[
      \int_E(\btens{\tau}\normal_E\cdot\normal_E)~\partial_{\normal_E}q
      - \int_E\dE\btens{\tau}~q
      \right]
    \\
    &\quad
    - \sum_{E\in\ET}\omega_{TE}\sum_{V\in\VE}\omega_{EV}(\btens{\tau}\normal_E\cdot\tangent_E)(\bvec{x}_V)~q(\bvec{x}_V).
  \end{aligned}
\end{equation}
For all $\utens{\tau}_T\in\utens{\Sigma}_T^{k-1}$, the \emph{discrete div-div operator} $\DD{k-2}:\uvec{\Sigma}_T^{k-1}\to\Poly{k-2}(T)$ is such that
\begin{multline}\label{eq:DDT}
  \int_T \DD{k-2}\utens{\tau}_T~q_T
  =
  \int_T\btens{\tau}_{\cvec{H},T}:\HESS q_T
  -\sum_{E\in\ET}\omega_{TE}\left(
  \int_E\tau_E\,\partial_{\normal_E}q_T
  -\int_E D_{\tau,E}\,q_T
  \right)
  \\
  -\sum_{E\in\ET}\omega_{TE}\sum_{V\in\VE}\omega_{EV}\,(\btens{\tau}_V\normal_E\cdot\tangent_E)\,q_T(\bvec{x}_V)
  \qquad\forall q_T\in\Poly{k-2}(T),
\end{multline}
while the \emph{tensor potential} $\PSigmaT{k-1}:\utens{\Sigma}_T^{k-1}\to\Poly{k-1}(T;\Symm)$ satisfies, for all $(q_T,\btens{\upsilon}_T)\in\Poly{k+1}(T)\times\cHoly{k-1}(T)$,
\begin{multline} \label{eq:P.Sigma.T}
  \int_T\PSigmaT{k-1}\utens{\tau}_T:\left(\HESS q_T + \btens{\upsilon}_T\right)
  =
  \int_T\DD{k-2}\utens{\tau}_T\,q_T
  + \sum_{E\in\ET}\omega_{TE}\left(
  \int_E \PSigmaE{k-1}\utens{\tau}_E\,\partial_{\normal_E} q_T
  - \int_E D_{\btens{\tau},E}\,q_T
  \right)
  \\    
  + \sum_{E\in\ET}\omega_{TE}\sum_{V\in\VE}\omega_{EV}(\btens{\tau}_V\normal_E\cdot\tangent_E)\,q_T(\bvec{x}_V)    
  + \int_T\btens{\tau}_{\cvec{H},T}^\compl:\btens{\upsilon}_T.
\end{multline}
Above, for all $E\in\ET$, denoting by $\utens{\tau}_E\coloneq\big(\tau_E, D_{\btens{\tau},E}, (\btens{\tau}_V)_{V\in\VE}\big)$ the restriction of $\utens{\tau}_T$ to $E$, $\PSigmaE{k-1}\utens{\tau}_E\in\Poly{k-1}(E)$ uniquely defined by the following conditions:
\[
  \text{
    $\PSigmaE{k-1}\utens{\tau}_E(\bvec{x}_V) = \btens{\tau}_V\normal_E\cdot\normal_E$ for all $V\in\VE$
    and $\pi_{\mathcal{P},E}^{k-3}\big(\PSigmaE{k-1}\utens{\tau}_E\big) = \tau_E$.
  }
\]
We recall for future use the following result proved in \cite[Lemma 4]{Di-Pietro.Droniou:22*1}:
\begin{equation}\label{eq:PSigmaT.circ.uCsymT=CsymT}
  \PSigmaT{k-1}\circ\uCsym{k-1} = \Csym{k-1},
\end{equation}
expressing the commutativity of the following diagram: 
\[
  \begin{tikzcd}
    \uvec{V}_T^k \arrow[r, "\Csym{k-1}"] \arrow[rd, swap, "\uCsym{k-1}"] & \tPoly{k-1}(T;\Symm) \\ 
    {} & \utens{\Sigma}_T^{k-1} \arrow[u, swap, "\PSigmaT{k-1}"] 
  \end{tikzcd}
\]
The global div-div operator $\DD[h]{k-2}:\utens{\Sigma}_h^{k-1}\to\Poly{k-2}(\Th)$ acting between spaces in the discrete complex is such that, for all $\utens{\tau}_h\in\utens{\Sigma}_h^{k-1}$,
\[
(\DD[h]{k-2}\utens{\tau}_h)_{|T}
\coloneq\DD{k-2}\utens{\tau}_T\qquad
\forall T\in\Th.
\]

\subsection{$L^2$-products and norms}\label{sec:l2-product.norms}

The discrete $L^2$-products in $\uvec{V}_h^k$ and $\utens{\Sigma}_h^{k-1}$ are defined setting: For all $\uvec{w}_h,\,\uvec{v}_h\in\uvec{V}_h^k$ and all $\utens{\upsilon}_h,\,\utens{\tau}_h\in\utens{\Sigma}_h^{k-1}$,
\[
(\uvec{w}_h,\uvec{v}_h)_{\bvec{V},h}
\coloneq\sum_{T\in\Th}(\uvec{w}_T,\uvec{v}_T)_{\bvec{V},T},\qquad
(\utens{\upsilon}_h,\utens{\tau}_h)_{\btens{\Sigma},h}
\coloneq\sum_{T\in\Th}(\utens{\upsilon}_T,\utens{\tau}_T)_{\btens{\Sigma},T},
\]
where, for all $T\in\Th$,
\begin{alignat}{2} \label{eq:L2 product in VT}
  (\uvec{w}_T,\uvec{v}_T)_{\bvec{V},T}
  &\coloneq\int_T\PV{k}\uvec{w}_T\cdot\PV{k}\uvec{v}_T
  + s_{\bvec{V},T}(\uvec{w}_T,\uvec{v}_T),
  \\ \label{eq:L2 product in ET}
  (\utens{\upsilon}_T,\utens{\tau}_T)_{\btens{\Sigma},T}
  &\coloneq\int_T\PSigmaT{k-1}\utens{\upsilon}_T:\PSigmaT{k-1}\utens{\tau}_T
  + s_{\btens{\Sigma},T}(\utens{\upsilon}_T,\utens{\tau}_T).
\end{alignat}
Above, $s_{\bvec{V},T}:\uvec{V}_T^k\times\uvec{V}_T^k\to\Real$
and $s_{\btens{\Sigma},T}:\utens{\Sigma}_T^{k-1}\times\utens{\Sigma}_T^{k-1}\to\Real$
are local stabilisation bilinear forms.
We refer to \cite[Section 4.2]{Di-Pietro.Droniou:22*1} for the precise expression of $s_{\btens{\Sigma},T}$ and we set
\begin{equation}\label{eq:SVT}
  s_{\bvec{V},T}(\uvec{w}_T,\uvec{v}_T)
  \coloneq h_T \sum_{E\in\ET}\int_E(\PV{k}\uvec{w}_T-\bvec{w}_{\ET})\cdot(\PV{k}\uvec{v}_T-\bvec{v}_{\ET}).
\end{equation}
By \eqref{eq:PVT.consistency}, this stabilisation bilinear form satisfies the following polynomial consistency property:
\begin{equation*}
  s_{\bvec{V},T}(\IV{k}\bvec{w},\uvec{v}_T) = 0
  \qquad\forall(\bvec{w},\uvec{v}_T)\in\vPoly{k}(T;\Real^2)\times\uvec{V}_T^k,
\end{equation*}
so that
\begin{equation}\label{eq:L2prod.VT:polynomial.consistency}
  (\IV{k}\bvec{w},\uvec{v}_T)_{\bvec{V},T}
  = \int_T\bvec{w}\cdot\PV{k}\uvec{v}_T
  \qquad\forall(\bvec{w},\uvec{v}_T)\in\vPoly{k}(T;\Real^2)\times\uvec{V}_T^k.
\end{equation}

We define the following \emph{$L^2$-product norms}:
For $\bullet\in\Th\cup\{h\}$ and all $(\uvec{v}_\bullet,\utens{\tau}_\bullet)\in\uvec{V}_\bullet^k\times\utens{\Sigma}_\bullet^{k-1}$,
\begin{equation}\label{eq:norms.V.Sigma}
  \norm[\bvec{V},\bullet]{\uvec{v}_\bullet}\coloneq
  (\uvec{v}_\bullet, \uvec{v}_\bullet)_{\bvec{V},\bullet}^{\nicefrac12},\qquad
  \norm[\btens{\Sigma},\bullet]{\utens{\tau}_\bullet}\coloneq
  (\utens{\tau}_\bullet, \utens{\tau}_\bullet)_{\btens{\Sigma},\bullet}^{\nicefrac12}.
\end{equation}
Given $T\in\Th$, we also define the local \emph{component norms} $\tnorm[\bvec{V},T]{{\cdot}}$ on $\uvec{V}_T^k$ and
$\tnorm[\btens{\Sigma},T]{{\cdot}}$ on $\utens{\Sigma}_T^{k-1}$ such that,
for all $(\uvec{v}_T,\utens{\tau}_T)\in\uvec{V}_T^k\times\utens{\Sigma}_T^{k-1}$,
\begin{align}\label{eq:tnorm.V}
  \tnorm[\bvec{V},T]{\uvec{v}_T}^2
  &\coloneq
  \norm[\bvec{L}^2(T;\Real^2)]{\bvec{v}_T}^2
  + \sum_{E\in\ET} h_T\norm[\bvec{L}^2(E;\Real^2)]{\bvec{v}_E}^2
  + \sum_{V\in\VT} \left(
  h_T^2 |\bvec{v}_V|^2 + h_T^4 |\btens{G}_{\bvec{v},V}|^2
  \right),
  \\ \label{eq:tnorm.Sigma}
  \tnorm[\btens{\Sigma},T]{\utens{\tau}_T}^2
  &\coloneq
  \norm[\btens{L}^2(T;\Real^{2\times 2})]{\btens{\tau}_{\ctens{H},T}}^2
  + \norm[\btens{L}^2(T;\Real^{2\times 2})]{\btens{\tau}_{\ctens{H},T}^\compl}^2
  + \sum_{E\in\ET}\left(
  h_T\norm[L^2(E)]{\tau_E}^2 + h_T^3\norm[L^2(E)]{D_{\btens{\tau},E}}^2
  \right)
  \\ \nonumber
  &\qquad
  + \sum_{V\in\VT}h_T^2|\btens{\tau}_V|^2.
\end{align}
The corresponding global component norms, respectively denoted by $\tnorm[\bvec{V},h]{{\cdot}}$ and $\tnorm[\btens{\Sigma},h]{{\cdot}}$, are obtained summing the squares of the local norms on every $T\in\Th$ and taking the square root of the result.
The following equivalences hold uniformly in $h$:
For all $\bullet\in\Th\cup\{h\}$ and all $(\uvec{v}_\bullet,\utens{\tau}_\bullet)\in\uvec{V}_\bullet^k\times\utens{\Sigma}_\bullet^{k-1}$,
\begin{equation}\label{eq:norm.equivalence}
  \norm[\bvec{V},\bullet]{\uvec{v}_\bullet}\simeq
  \tnorm[\bvec{V},\bullet]{\uvec{v}_\bullet},\qquad
  \norm[\btens{\Sigma},\bullet]{\utens{\tau}_\bullet}\simeq
  \tnorm[\btens{\Sigma},\bullet]{\utens{\tau}_\bullet}.
\end{equation}
The second equivalence has been proved in \cite[Lemma 9]{Di-Pietro.Droniou:22*1}.
The first one follows from similar arguments, not detailed here for the sake of conciseness.

For future use, we note the following boundedness properties of the local interpolators, that can be proved using trace inequalities:
\begin{subequations}\label{eq:I:boundedness}
  \begin{alignat}{4}\label{eq:IV:boundedness}
    \tnorm[\bvec{V},T]{\IV{k}\bvec{v}}
    &\lesssim
    \norm[\bvec{L}^2(T;\Real^2)]{\bvec{v}}
    + h_T\seminorm[\bvec{H}^1(T;\Real^2)]{\bvec{v}}
    + h_T^2\seminorm[\bvec{H}^2(T;\Real^2)]{\bvec{v}}
    + h_T^3\seminorm[\bvec{H}^3(T;\Real^2)]{\bvec{v}}
    &\quad&
    \forall\bvec{v}\in\bvec{H}^3(T;\Real^2),
    \\ \label{eq:ISigma:boundedness}
    \tnorm[\btens{\Sigma},T]{\ISigma{k-1}\btens{\tau}}
    &\lesssim
    \norm[\btens{L}^2(T;\Real^{2\times 2})]{\btens{\tau}}
    + h_T\seminorm[\btens{H}^1(T;\Real^{2\times 2})]{\btens{\tau}}
    + h_T^2\seminorm[\btens{H}^2(T;\Real^{2\times 2})]{\btens{\tau}}
    &\quad&
    \forall\btens{\tau}\in\btens{H}^2(T;\Symm).
  \end{alignat}
\end{subequations}

\subsection{Poincaré inequalities}

The goal of this section is to prove the following result.

\begin{lemma}[Poincaré inequalities]\label{lem:poincare}
  The following properties hold:
  \begin{enumerate}
  \item For all $\uvec{v}_h\in\uvec{V}_h^k$ such that
  \begin{equation}\label{eq:orthogonality:sym.curl}
    \sum_{T\in\Th}\int_T\PV{k}\uvec{v}_T\cdot\bvec{w} = 0
    \qquad\forall\bvec{w}\in\RT{1}(\Omega),
  \end{equation}
  it holds, with hidden constant independent of $\uvec{v}_h$,
  \begin{equation}\label{eq:poincare.V}
    \tnorm[\bvec{V},h]{\uvec{v}_h}\lesssim\tnorm[\btens{\Sigma},h]{\uCsym[h]{k-1}\uvec{v}_h};
  \end{equation}
\item Denote by $[\cdot,\cdot]_{\btens{\Sigma},h}$ an inner product in $\utens{\Sigma}_h^{k-1}$ with induced norm equivalent to $\tnorm[\btens{\Sigma},h]{{\cdot}}$ uniformly in $h$.
  Then, for all $\utens{\tau}_h\in\utens{\Sigma}_h^{k-1}$ such that
  \[
  [\utens{\tau}_h,\utens{\eta}_h]_{\btens{\Sigma},h} = 0
  \qquad\forall \utens{\eta}_h \in \ker \DD[h]{k-2},
  \]
  it holds, with hidden constant independent of $\utens{\tau}_h$,
  \begin{equation}\label{eq:poincare.Sigma}
    \tnorm[\btens{\Sigma},h]{\utens{\tau}_h}
    \lesssim\norm[L^2(\Omega)]{\DD[h]{k-2}\utens{\tau}_h}.
  \end{equation}
  \end{enumerate}
\end{lemma}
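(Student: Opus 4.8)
Both Poincaré inequalities are "complex-level" statements that should be deduced from their local counterparts combined with a global argument controlling the "kernel" directions. The natural strategy for DDR complexes is to use the abstract framework of \cite{Di-Pietro.Droniou:22}: establish (i) that the discrete complex has the right cohomology (so that, e.g., the image of $\uCsym[h]{k-1}$ coincides with the kernel of $\DD[h]{k-2}$ up to finite-dimensional corrections), and (ii) local Poincaré-type estimates on each element, then patch together via the component norms. Since the excerpt provides the commutation property \eqref{eq:PSigmaT.circ.uCsymT=CsymT}, the polynomial consistency \eqref{eq:PVT.consistency}, and the norm equivalences \eqref{eq:norm.equivalence}, the bulk of the work is to combine these with the Poincaré--Korn inequalities for hybrid fields announced for the Appendix.

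\textbf{Part 1 (the $\bvec{V}$-estimate).} First I would reduce, via the norm equivalence $\norm[\bvec{V},h]{{\cdot}}\simeq\tnorm[\bvec{V},h]{{\cdot}}$, to bounding $\tnorm[\bvec{V},h]{\uvec{v}_h}$. The key is that $\tnorm[\bvec{V},h]{\uvec{v}_h}$ controls (up to mesh-dependent weights) the broken $L^2$-norm of the element potentials $\PV{k}\uvec{v}_T$ plus the jump/stabilisation contributions encoded in $s_{\bvec{V},T}$; the latter are, by definition \eqref{eq:SVT}, exactly the mismatch between $\PV{k}\uvec{v}_T$ and the boundary reconstruction $\bvec{v}_{\ET}$. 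So the strategy is: (a) apply a \emph{hybrid Poincaré--Korn inequality} (from Appendix \ref{appendix}) to the piecewise-polynomial vector field $(\PV{k}\uvec{v}_T)_{T}$ together with its trace data, controlling its $H^1$-broken-type norm by its symmetric gradient plus jumps; (b) use \eqref{eq:PSigmaT.circ.uCsymT=CsymT} to identify $\SYM\CURL\PV{k}\uvec{v}_T = \PSigmaT{k-1}\uCsym{k-1}\uvec{v}_T$, hence bound the symmetric-curl term by $\norm[\btens{\Sigma},h]{\uCsym[h]{k-1}\uvec{v}_h}\simeq\tnorm[\btens{\Sigma},h]{\uCsym[h]{k-1}\uvec{v}_h}$; (c) absorb the rigid-body / $\RT{1}(\Omega)$ component using the orthogonality condition \eqref{eq:orthogonality:sym.curl}, which kills precisely the kernel of $\SYM\CURL$ at the continuous level (namely $\RT{1}(\Omega)$, matching the first arrow of \eqref{eq:continuous.complex}). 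The jump terms that appear in the hybrid Korn inequality must be matched to the stabilisation $s_{\bvec{V},T}$ — this is where the specific form \eqref{eq:SVT} is designed to fit.

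\textbf{Part 2 (the $\btens{\Sigma}$-estimate).} This is a surjectivity-type Poincaré inequality: $\DD[h]{k-2}$ has a kernel, and one controls $\tnorm[\btens{\Sigma},h]{\utens{\tau}_h}$ on its orthogonal complement by $\norm[L^2(\Omega)]{\DD[h]{k-2}\utens{\tau}_h}$. The standard route is a \emph{right-inverse / Bogovskii-type} construction: given $r_h \coloneq \DD[h]{k-2}\utens{\tau}_h \in \Poly{k-2}(\Th)$, one must exhibit $\utens{\sigma}_h\in\utens{\Sigma}_h^{k-1}$ with $\DD[h]{k-2}\utens{\sigma}_h = r_h$ and $\tnorm[\btens{\Sigma},h]{\utens{\sigma}_h}\lesssim\norm[L^2(\Omega)]{r_h}$; then $\utens{\tau}_h-\utens{\sigma}_h\in\ker\DD[h]{k-2}$, the inner-product orthogonality gives $[\utens{\tau}_h,\utens{\tau}_h]_{\btens{\Sigma},h} = [\utens{\tau}_h,\utens{\sigma}_h]_{\btens{\Sigma},h}\le\tnorm[\btens{\Sigma},h]{\utens{\tau}_h}\,\tnorm[\btens{\Sigma},h]{\utens{\sigma}_h}$, and Cauchy--Schwarz finishes. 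To build $\utens{\sigma}_h$ one lifts $r_h$ to a continuous symmetric tensor field $\btens{\tau}\in\btens{H}^2(\Omega;\Symm)$ with $\DIV\VDIV\btens{\tau} = r_h$ and $\norm[\btens{H}^2]{\btens{\tau}}\lesssim\norm[L^2]{r_h}$ (continuous surjectivity of $\DIV\VDIV$ from the last arrow of \eqref{eq:continuous.complex}, i.e. right-exactness of the continuous div-div complex — standard since $\Omega$ is polygonal and connected), then set $\utens{\sigma}_h\coloneq\ISigma[h]{k-1}\btens{\tau}$ and invoke the commutation of the interpolator with $\DIV\VDIV$ together with the boundedness \eqref{eq:ISigma:boundedness}. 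The element-wise estimate \eqref{eq:est.div.div.-1} from Proposition \ref{prop:continuity.inverses} may be needed to handle the $\cColy{}$-components locally when refining this bound.

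\textbf{Main obstacle.} The delicate point is Part 1: proving the hybrid Poincaré--Korn inequality with \emph{exactly} the jump terms appearing in $s_{\bvec{V},T}$ and showing they are the right ones — i.e. that controlling $(\PV{k}\uvec{v}_T-\bvec{v}_{\ET})$ on edges (not full inter-element jumps of the potential, but the mismatch with the single-valued boundary reconstruction $\bvec{v}_{\ET}$, which itself encodes vertex gradient values $\btens{G}_{\bvec{v},V}$) suffices to glue the element estimates into a global Korn inequality. This requires care in handling the vertex degrees of freedom $(\bvec{v}_V,\btens{G}_{\bvec{v},V})$, which are what distinguishes this $\SYM\CURL$-type complex from a plain $H^1$ setting, and is precisely why the Appendix on hybrid Poincaré--Korn inequalities is flagged as "instrumental." Part 2 is comparatively routine once the continuous right-exactness and the interpolator commutation are in hand. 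Throughout, one also uses that $k\ge 3$ so that the polynomial degrees in \eqref{eq:VT}--\eqref{eq:SigmaT} are all non-negative and the consistency properties apply.
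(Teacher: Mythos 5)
Your Part 2 is essentially the paper's proof: continuous surjectivity of $\DIV\VDIV:\btens{H}^2(\Omega;\Symm)\to L^2(\Omega)$, the commutation property of $\ISigma[h]{k-1}$ with $\DIV\VDIV$, the boundedness \eqref{eq:ISigma:boundedness}, and the orthogonality/projection argument — nothing to add there. Your Part 1 also follows the paper's architecture (norm equivalence \eqref{eq:norm.equivalence}, the hybrid Poincar\'e--Korn inequality \eqref{eq:hho-SymCurl} of Appendix \ref{appendix} applied to the hybrid field built from $(\PV{k}\uvec{v}_T)_{T\in\Th}$ and the single-valued traces $\bvec{v}_{\ET}$, and the orthogonality \eqref{eq:orthogonality:sym.curl} to kill the $\RT{1}(\Omega)$ kernel), but it contains a genuine gap at the step where you close the estimate. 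The identity you invoke, $\SYM\CURL\PV{k}\uvec{v}_T=\PSigmaT{k-1}\uCsym{k-1}\uvec{v}_T$, is false: the commutation property \eqref{eq:PSigmaT.circ.uCsymT=CsymT} gives $\PSigmaT{k-1}\uCsym{k-1}\uvec{v}_T=\Csym{k-1}\uvec{v}_T$, and $\Csym{k-1}\uvec{v}_T$ differs from $\SYM\CURL\PV{k}\uvec{v}_T$ by boundary contributions, as one sees by combining \eqref{eq:PVT} (valid for $\btens{\tau}\in\tPoly{k-1}(T;\Symm)$ by Remark \ref{rem:validity.PVT}) with the integration by parts \eqref{eq:ibp.VT}: for all $\btens{\tau}\in\tPoly{k-1}(T;\Symm)$,
\begin{equation*}
  \int_T\SYM\CURL\PV{k}\uvec{v}_T:\btens{\tau}
  = \int_T\Csym{k-1}\uvec{v}_T:\btens{\tau}
  + \sum_{E\in\ET}\omega_{TE}\int_E(\PV{k}\uvec{v}_T-\bvec{v}_{\ET})\cdot(\btens{\tau}\,\tangent_E).
\end{equation*}
Consequently the quantity you must actually bound by $\tnorm[\btens{\Sigma},T]{\uCsym{k-1}\uvec{v}_T}$ is the sum $\norm[\btens{L}^2(T;\Real^{2\times 2})]{\SYM\CURL\PV{k}\uvec{v}_T}+\sum_{E\in\ET}h_T^{-\nicefrac12}\norm[\bvec{L}^2(E;\Real^2)]{\PV{k}\uvec{v}_T-\bvec{v}_{\ET}}$, where the edge terms are exactly the jumps of the hybrid Korn inequality and (up to $h_T\lesssim 1$) the stabilisation \eqref{eq:SVT}. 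This is the content of Proposition \ref{lem:est.H1norm.PVT}, which is the technical heart of the paper's proof and which you flag as "the main obstacle" without supplying an argument: the paper proves it via the one-dimensional estimate \eqref{eq:est.normL2.1d}, the boundary-derivative estimate of Proposition \ref{prop:est.boundary.fct} (which is where the vertex data $\btens{G}_{\bvec{v},V}$ enter through $\mathbb{C}\btens{G}_{\bvec{v},V}$), a Raviart--Thomas corrector $\bvec{w}$ fixed by the mean conditions \eqref{eq:w}, the polynomial consistency \eqref{eq:PVT.consistency}, and the inverse continuity \eqref{eq:est.vrot.-1}. Without this local estimate (or an equivalent), the edge mismatch and the sym-curl of the potential are not controlled by the right-hand side of \eqref{eq:poincare.V}, and the argument does not close.
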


\begin{remark}[Poincaré inequality for the symmetric curl]
    The standard $L^2$-product in \eqref{eq:orthogonality:sym.curl} could be replaced by a weighted version and \eqref{eq:poincare.V} would still hold.
\end{remark}

\subsubsection{Preliminary results}\label{sec:poincare:preliminary.results}

This section contains preliminary results required in the proof of Lemma \ref{lem:poincare}.
\begin{proposition}[Estimate of the $L^2$-norm of one-variate functions]
  Let $E\in\Eh$ and, for a given polynomial degree $m\ge 0$, $\varphi\in\Poly{m}(E)$.
  Then,
  \begin{equation}\label{eq:est.normL2.1d}
    \norm[L^2(E)]{\varphi}
    \lesssim
    \norm[L^2(E)]{\lproj{m-2}{E}\varphi}
    + h_E^{\nicefrac12} \sum_{V\in\VE}|\varphi(\bvec{x}_V)|.
  \end{equation}
\end{proposition}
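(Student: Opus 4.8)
The plan is to reduce the estimate on the edge $E$ to a scale-invariant statement on a reference interval, and there exploit the fact that evaluation at the endpoints together with the $L^2$-projection onto $\Poly{m-2}$ determines a polynomial of degree $m$. First I would fix $E\in\Eh$ with endpoints $V_1,V_2\in\VE$ and introduce an affine bijection $\phi_E:\widehat E\to E$ from a reference interval $\widehat E$ (say $[0,1]$ or $[-1,1]$) with $|\phi_E'|\simeq h_E$, and set $\widehat\varphi\coloneq\varphi\circ\phi_E\in\Poly{m}(\widehat E)$. Under this change of variables $\norm[L^2(E)]{\varphi}\simeq h_E^{\nicefrac12}\norm[L^2(\widehat E)]{\widehat\varphi}$, $\norm[L^2(E)]{\lproj{m-2}{E}\varphi}\simeq h_E^{\nicefrac12}\norm[L^2(\widehat E)]{\lproj{m-2}{\widehat E}\widehat\varphi}$ (the $L^2$-projector commutes with the affine pullback since the polynomial spaces are preserved), and $|\varphi(\bvec{x}_V)|=|\widehat\varphi(\widehat x_V)|$ for the reference endpoints $\widehat x_V$. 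Hence the claimed inequality is equivalent to the $h$-free bound $\norm[L^2(\widehat E)]{\widehat\varphi}\lesssim\norm[L^2(\widehat E)]{\lproj{m-2}{\widehat E}\widehat\varphi}+\sum_{V\in\VE}|\widehat\varphi(\widehat x_V)|$ on the fixed interval.

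Next I would prove this reference estimate by a dimension/compactness argument (norm equivalence on finite-dimensional spaces), as is standard for this type of serendipity estimate. Consider the linear map $N:\Poly{m}(\widehat E)\to\Real$ defined by $N(\widehat\varphi)\coloneq\norm[L^2(\widehat E)]{\lproj{m-2}{\widehat E}\widehat\varphi}+|\widehat\varphi(\widehat x_{V_1})|+|\widehat\varphi(\widehat x_{V_2})|$; it is a seminorm on the finite-dimensional space $\Poly{m}(\widehat E)$, and it suffices to show it is in fact a norm, i.e. that $N(\widehat\varphi)=0$ implies $\widehat\varphi=0$. If $N(\widehat\varphi)=0$ then $\lproj{m-2}{\widehat E}\widehat\varphi=0$, which places $\widehat\varphi$ in the $2$-dimensional orthogonal complement of $\Poly{m-2}(\widehat E)$ inside $\Poly{m}(\widehat E)$ (this is where $m\ge 0$ and the polynomial-space inclusions are used, with the convention $\Poly{-1}=\{0\}$ handled separately below), and the two further conditions $\widehat\varphi(\widehat x_{V_1})=\widehat\varphi(\widehat x_{V_2})=0$ force $\widehat\varphi$ to vanish on this $2$-dimensional space — concretely, any such $\widehat\varphi$ is a linear combination of two Legendre-type polynomials of degrees $m-1,m$ (or $m,m-1$), and a nonzero element of this space has at most $m$ roots but the two endpoint conditions plus orthogonality to $\Poly{m-2}$ over-determine it; one checks directly that the $2\times 2$ evaluation matrix at the endpoints applied to a basis of this complement is invertible. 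By equivalence of norms on finite-dimensional spaces, $\norm[L^2(\widehat E)]{{\cdot}}\lesssim N(\cdot)$, which is the desired reference inequality.

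The low-degree cases must be treated by hand: for $m=0$ we have $\Poly{-2}=\Poly{-1}=\{0\}$ so the first term on the right of \eqref{eq:est.normL2.1d} is absent, but then $\widehat\varphi$ is constant and equals its endpoint value, so the bound is trivial; for $m=1$, $\lproj{-1}{\widehat E}\widehat\varphi=0$ as well, and a degree-one polynomial is determined by its two endpoint values with the obvious bound. For $m\ge 2$ the complement of $\Poly{m-2}(\widehat E)$ in $\Poly{m}(\widehat E)$ genuinely has dimension $2$ and the argument above applies verbatim. I expect the main obstacle — really the only nontrivial point — to be verifying that the endpoint evaluations are unisolvent on the two-dimensional $L^2$-orthogonal complement of $\Poly{m-2}(\widehat E)$ in $\Poly{m}(\widehat E)$; this is conveniently done using the parity of Legendre polynomials on a symmetric interval $[-1,1]$, where the complement is spanned by $L_{m-1}$ and $L_m$ of opposite parity, so the endpoint-evaluation matrix $\left(\begin{smallmatrix}L_{m-1}(-1)&L_m(-1)\\ L_{m-1}(1)&L_m(1)\end{smallmatrix}\right)=\left(\begin{smallmatrix}(-1)^{m-1}&(-1)^m\\ 1&1\end{smallmatrix}\right)$ has determinant $\pm 2\ne 0$. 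All remaining steps are the routine scaling computations sketched in the first paragraph.
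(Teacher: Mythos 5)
Your proof is correct, but it follows a genuinely different route from the paper. The paper argues directly on the edge: it inserts the mean value $\overline{\varphi}_E$, controls $\norm[L^2(E)]{\varphi-\overline{\varphi}_E}$ by a Poincar\'e--Wirtinger inequality combined with an integration by parts along $E$ (which is exactly where $\lproj{m-2}{E}\varphi$ and the vertex values appear), and then bounds $\norm[L^2(E)]{\overline{\varphi}_E}$ by a case analysis ($m\ge 2$ via $L^2$-boundedness of $\lproj{0}{E}\circ\lproj{m-2}{E}$, $m\in\{0,1\}$ via $\overline{\varphi}_E=\frac12\sum_{V\in\VE}\varphi(\bvec{x}_V)$). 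You instead rescale to a reference interval and invoke equivalence of norms on the finite-dimensional space $\Poly{m}(\widehat E)$, the only substantive point being unisolvence of the two endpoint evaluations on the two-dimensional $L^2$-orthogonal complement of $\Poly{m-2}(\widehat E)$, which your Legendre parity computation settles (and your separate treatment of $m\in\{0,1\}$ is consistent with the convention $\Poly{-1}=\{0\}$); the commutation of the $L^2$-projector with the affine pullback and the $h_E^{\nicefrac12}$ scalings are as you state. What each approach buys: yours is shorter and conceptually transparent (a pure unisolvence statement), at the price of a nonconstructive constant from finite-dimensional norm equivalence, whose dependence on $m$ is nonetheless admissible here since the paper's convention allows hidden constants to depend on the polynomial degree (and the reference-element reduction is uniform for edges, so no mesh-regularity subtlety arises); the paper's argument is more quantitative, using only standard Poincar\'e--Wirtinger, trace and inverse inequalities, in the same spirit as the other estimates of Section~3. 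One cosmetic slip: $N$ is a seminorm, not a ``linear map'', as you yourself say immediately after; this does not affect the argument.
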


\begin{proof}
  Let $\overline{\varphi}_E\coloneq \frac{1}{h_E}\int_E\varphi$ denote the average value of $\varphi$ over $E$.
  Inserting $\overline{\varphi}_E$ and using a triangle inequality, we can write
  \begin{equation}\label{eq:est.normL2.1d:basic}
    \norm[L^2(E)]{\varphi}
    \le\norm[L^2(E)]{\varphi - \overline{\varphi}_E} + \norm[L^2(E)]{\overline{\varphi}_E}
    \eqcolon \term_1 + \term_2.
  \end{equation}
  To estimate the first term we start with a Poincaré--Wirtinger inequality to write $\term_1\lesssim h_E\norm[L^2(E)]{\varphi'}$, where $\varphi'\coloneq\partial_{\tangent_E}\varphi$.
  We then notice that, for all $\psi\in\Poly{m-1}(E)$,
  $
  \int_E\varphi'~\psi = -\int_E\lproj{m-2}{E}\varphi~\psi' + \sum_{V\in\VE}\omega_{EV}\varphi(\bvec{x}_V)~\psi(\bvec{x}_V),
  $
  where the introduction of $\lproj{m-2}{E}$ inside the integral is justified by its definition after observing that $\psi'\in\Poly{m-2}(E)$ and $\omega_{EV}$ is the orientation of $V$ relative to $E$.
  Taking $\psi = \varphi'$, using Cauchy--Schwarz and discrete trace inequalities in the right-hand side, simplifying and multiplying by $h_E$, we obtain
  $h_E\norm[L^2(E)]{\varphi'}\lesssim\norm[L^2(E)]{\lproj{m-2}{E}\varphi} + h_E^{\nicefrac12}\sum_{V\in\VE}|\varphi(\bvec{x}_V)|$, hence
  \begin{equation}\label{eq:est.normL2.1d:term1}
    \term_1\lesssim\norm[L^2(E)]{\lproj{m-2}{E}\varphi} + h_E^{\nicefrac12}\sum_{V\in\VE}|\varphi(\bvec{x}_V)|.
  \end{equation}
  To estimate $\term_2$ we distinguish two cases. If $m\ge 2$, then $\term_2 = \norm[L^2(E)]{\lproj{0}{E}\varphi} = \norm[L^2(E)]{\lproj{0}{E}(\lproj{m-2}{E}\varphi)}\le\norm[L^2(E)]{\lproj{m-2}{E}\varphi}$ by $L^2$-boundedness of $\lproj{0}{E}$.
  If, on the other hand, $m\in\{0,1\}$, $\overline{\varphi}_E = \frac12\sum_{V\in\VE}\varphi(\bvec{x}_V)$, so that $\term_2\le h_E^{\nicefrac12}\sum_{V\in\VE}|\varphi(\bvec{x}_V)|$.
  In all the cases, we therefore have
  $
  \term_2\lesssim\norm[L^2(E)]{\lproj{m-2}{E}\varphi} + h_E^{\nicefrac12}\sum_{V\in\VE}|\varphi(\bvec{x}_V)|,
  $
  which, used together with \eqref{eq:est.normL2.1d:term1} in \eqref{eq:est.normL2.1d:basic}, yields \eqref{eq:est.normL2.1d}.
\end{proof}

Let $\uvec{v}_{\partial T}\in\uvec{V}_{\partial T}^k$ and let $\bvec{v}_{\ET}$ be given by \eqref{eq:v.ET}.
For all $E\in\ET$, we decompose it into its tangential and normal components as $\bvec{v}_{\ET|E} = v_{\normal,E} \normal_E + v_{\tangent,E} \tangent_E$ and, for $\bullet\in\{\normal,\tangent\}$, we let $v_{\bullet,\partial T}\in\Poly{k}(\ET)$ be such that $(v_{\bullet,\partial T})_{|E} = \omega_{TE}v_{\bullet,E}$ for all $E\in\ET$.
We additionally denote by $\partial_{\tangent_{\partial T}}$ the piecewise tangential derivative on $\partial T$ such that $(\partial_{\tangent_{\partial T}})_{|E}\coloneq\omega_{TE}\partial_{\tangent_E}$ for all $E\in\ET$.

\begin{proposition}[Estimate of the tangential derivative of the boundary reconstruction]\label{prop:est.boundary.fct}
  Let $T\in\Th$, $\uvec{v}_{\partial T}\in\uvec{V}_{\partial T}^k$, and $\bvec{v}_{\ET}$ given by \eqref{eq:v.ET} be such that $\int_{\partial T}\partial_{\tangent_{\partial T}}v_{\tangent,\partial T} = 0$.
  Then,
  \begin{equation}\label{eq:est.boundary.fct}
    \norm[\bvec{L}^2(\partial T;\Real^2)]{\partial_{\tangent_{\partial T}}\bvec{v}_{\ET}}
    \lesssim
    \norm[L^2(\partial T)]{\lproj{k-3}{\ET}(\partial_{\partial T}v_{\normal,\partial T})}
    + h_T\norm[L^2(\partial T)]{\partial_{\tangent_{\partial T}}^2v_{\tangent,\partial T}}
    + h_T^{\nicefrac12}\sum_{V\in\VT}|\mathbb{C}\btens{G}_{\bvec{v},V}|,
  \end{equation}
  where $\lproj{k-3}{\ET}$ denotes the $L^2$-orthogonal projector on $\Poly{k-3}(\ET)$.
\end{proposition}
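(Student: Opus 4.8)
The plan is to reduce to a component-wise estimate on each edge and then exploit the structure of the boundary reconstruction \eqref{eq:v.ET}. Fix $E\in\ET$ and write $\bvec{v}_{\ET|E} = v_{\normal,E}\normal_E + v_{\tangent,E}\tangent_E$ with $\normal_E,\tangent_E$ constant on $E$; then $\partial_{\tangent_E}\bvec{v}_{\ET|E} = (\partial_{\tangent_E}v_{\normal,E})\normal_E + (\partial_{\tangent_E}v_{\tangent,E})\tangent_E$ and, since $(\normal_E,\tangent_E)$ is orthonormal and $|\omega_{TE}|=1$,
\[
  \norm[\bvec{L}^2(\partial T;\Real^2)]{\partial_{\tangent_{\partial T}}\bvec{v}_{\ET}}^2
  = \sum_{E\in\ET}\Big(\norm[L^2(E)]{\partial_{\tangent_E}v_{\normal,E}}^2 + \norm[L^2(E)]{\partial_{\tangent_E}v_{\tangent,E}}^2\Big),
\]
so it suffices to bound the normal and tangential contributions separately. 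A recurring ingredient is that several vertex quantities are controlled by $\mathbb{C}\btens{G}_{\bvec{v},V}$: by \eqref{eq:def.tensor.C}, $\ker\mathbb{C}$ is the line of scalar multiples of the identity matrix, so any linear functional on $\Real^{2\times2}$ that vanishes on multiples of the identity factors through $\mathbb{C}$, and is hence $\lesssim|\mathbb{C}\btens{G}_{\bvec{v},V}|$ (the hidden constant being uniform since $\tangent_E,\normal_E$ range in a compact set).

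For the normal contribution, $\partial_{\tangent_E}v_{\normal,E}\in\Poly{k-1}(E)$, so the one-variate estimate \eqref{eq:est.normL2.1d} (with $m=k-1\ge 0$) yields
\[
  \norm[L^2(E)]{\partial_{\tangent_E}v_{\normal,E}}
  \lesssim \norm[L^2(E)]{\lproj{k-3}{E}\big(\partial_{\tangent_E}v_{\normal,E}\big)}
  + h_E^{\nicefrac12}\sum_{V\in\VE}\big|\partial_{\tangent_E}v_{\normal,E}(\bvec{x}_V)\big|.
\]
By \eqref{eq:v.ET} we have $\partial_{\tangent_E}v_{\normal,E}(\bvec{x}_V) = (\btens{G}_{\bvec{v},V}\tangent_E)\cdot\normal_E$, which vanishes when $\btens{G}_{\bvec{v},V}$ is a multiple of the identity (as $\tangent_E\cdot\normal_E=0$), hence is $\lesssim|\mathbb{C}\btens{G}_{\bvec{v},V}|$. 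Squaring, summing over $E\in\ET$ (using $h_E\simeq h_T$, that each vertex belongs to exactly two edges, and that $\lproj{k-3}{\ET}$ acts edgewise with $(\partial_{\tangent_{\partial T}}v_{\normal,\partial T})_{|E}=\partial_{\tangent_E}v_{\normal,E}$), we obtain $\sum_{E\in\ET}\norm[L^2(E)]{\partial_{\tangent_E}v_{\normal,E}}^2 \lesssim \norm[L^2(\partial T)]{\lproj{k-3}{\ET}(\partial_{\tangent_{\partial T}}v_{\normal,\partial T})}^2 + h_T\sum_{V\in\VT}|\mathbb{C}\btens{G}_{\bvec{v},V}|^2$.

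The tangential contribution is the delicate one, since no $\lproj{k-3}{E}(\partial_{\tangent_E}v_{\tangent,E})$ term is available in \eqref{eq:est.boundary.fct} and a direct application of \eqref{eq:est.normL2.1d} is therefore ruled out; this is exactly where the zero-mean hypothesis enters. Set $w\coloneq\partial_{\tangent_{\partial T}}v_{\tangent,\partial T}$, so $w_{|E}=\partial_{\tangent_E}v_{\tangent,E}\in\Poly{k-1}(E)$ and, by assumption, $\sum_{E\in\ET}\int_E w_{|E}=\int_{\partial T}w=0$. With $\overline{w}_E\coloneq h_E^{-1}\int_E w_{|E}$, a Poincaré--Wirtinger inequality on each edge gives $\norm[L^2(E)]{w_{|E}-\overline{w}_E}\lesssim h_E\norm[L^2(E)]{\partial_{\tangent_E}w_{|E}}$, whence $\sum_{E\in\ET}\norm[L^2(E)]{w_{|E}}^2 \lesssim h_T^2\norm[L^2(\partial T)]{\partial_{\tangent_{\partial T}}^2 v_{\tangent,\partial T}}^2 + h_T\sum_{E\in\ET}|\overline{w}_E|^2$. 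To control the edge averages, enumerate the edges $E_1,\dots,E_N$ ($N=\card\ET$ bounded) cyclically around $\partial T$, with $V_i$ the vertex shared by $E_i$ and $E_{i+1}$; the bound $|w_{|E_i}(s)-w_{|E_i}(\bvec{x}_{V})|\le h_{E_i}^{\nicefrac12}\norm[L^2(E_i)]{\partial_{\tangent_{E_i}}w_{|E_i}}$ together with the identities $w_{|E_i}(\bvec{x}_{V_i})=(\btens{G}_{\bvec{v},V_i}\tangent_{E_i})\cdot\tangent_{E_i}$ and $w_{|E_{i+1}}(\bvec{x}_{V_i})=(\btens{G}_{\bvec{v},V_i}\tangent_{E_{i+1}})\cdot\tangent_{E_{i+1}}$ from \eqref{eq:v.ET} — whose difference vanishes when $\btens{G}_{\bvec{v},V_i}$ is a multiple of the identity (both reducing to $|\tangent|^2=1$), hence is $\lesssim|\mathbb{C}\btens{G}_{\bvec{v},V_i}|$ — give
\[
  |\overline{w}_{E_{i+1}}-\overline{w}_{E_i}|
  \lesssim |\mathbb{C}\btens{G}_{\bvec{v},V_i}| + h_{E_i}^{\nicefrac12}\norm[L^2(E_i)]{\partial_{\tangent_{E_i}}w_{|E_i}} + h_{E_{i+1}}^{\nicefrac12}\norm[L^2(E_{i+1})]{\partial_{\tangent_{E_{i+1}}}w_{|E_{i+1}}}.
\]
Since $\sum_i h_{E_i}\overline{w}_{E_i}=0$ and $h_{E_i}\simeq h_T$, a telescoping argument bounds each $|\overline{w}_{E_i}|$ by the sum of the right-hand sides above; squaring, using $N$ bounded and Cauchy--Schwarz, we arrive at $h_T\sum_{E\in\ET}|\overline{w}_E|^2\lesssim h_T^2\norm[L^2(\partial T)]{\partial_{\tangent_{\partial T}}^2 v_{\tangent,\partial T}}^2 + h_T\sum_{V\in\VT}|\mathbb{C}\btens{G}_{\bvec{v},V}|^2$.

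Collecting the normal and tangential estimates, taking square roots, and using $\big(\sum_{V\in\VT}|\mathbb{C}\btens{G}_{\bvec{v},V}|^2\big)^{\nicefrac12}\le\sum_{V\in\VT}|\mathbb{C}\btens{G}_{\bvec{v},V}|$ yields \eqref{eq:est.boundary.fct}. The main obstacle is the tangential part: recognising that the zero-mean hypothesis must be used (a naive edgewise estimate leaves the uncontrollable projection term $\lproj{k-3}{E}(\partial_{\tangent_E}v_{\tangent,E})$), turning it into a Poincaré-type control of the edge averages through the increment/telescoping argument around $\partial T$, and checking that the relevant vertex differences factor through the tensor $\mathbb{C}$.
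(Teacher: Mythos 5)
Your proof is correct and follows essentially the same route as the paper: the same normal/tangential splitting, the same use of the one-variate estimate \eqref{eq:est.normL2.1d} with vertex values rewritten through $\mathbb{C}\btens{G}_{\bvec{v},V}$ for the normal part, and for the tangential part the same Poincar\'e-with-vertex-jumps mechanism driven by the zero-mean hypothesis. The only difference is cosmetic: where the paper invokes the stated boundary inequality \eqref{eq:poincare.dg} and an explicit jump identity involving $\tfrac12\tr\btens{G}_{\bvec{v},V}$, you prove the corresponding control directly via edgewise Poincar\'e--Wirtinger plus telescoping of edge averages and a factorisation-through-$\mathbb{C}$ argument, which amounts to supplying the proof of that auxiliary inequality.
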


\begin{proof}
  Denote, for the sake of brevity, by $\mathcal{N}_{\partial T}(\bvec{v}_{\ET})$ the quantity in the right-hand side of \eqref{eq:est.boundary.fct}.
  We start using a triangle inequality along with H\"older inequalities and the fact that $\tangent_E$ and $\normal_E$ are unit vectors to write
  \begin{equation}\label{eq:est.H1norm.PVT:term21:basic}
    \norm[\bvec{L}^2(\partial T;\Real^2)]{\partial_{\tangent_{\partial T}}\bvec{v}_{\ET}}      
    \lesssim
    \norm[L^2(\partial T)]{\partial_{\tangent_{\partial T}}v_{\normal,\partial T}}
    + \norm[L^2(\partial T)]{\partial_{\tangent_{\partial T}}v_{\tangent,\partial T}}
    \eqcolon\term_1 + \term_2.
  \end{equation}
  
  By \eqref{eq:est.normL2.1d} applied to each $E\in\ET$ with $\varphi = \partial_{\tangent_E}v_{\normal,E}$ and $m = k-1$, it is readily inferred that    
  \begin{equation}\label{eq:est.boundary.fct:term1}
    \term_1
    \lesssim
    \norm[L^2(\partial T)]{\lproj{k-3}{\ET}(\partial_{\tangent_{\partial T}} v_{\normal,\partial T})}
    + h_T^{\nicefrac12}\sum_{E\in\ET}\sum_{V\in\VE}|\partial_{\tangent_E} v_{\normal,E}(\bvec{x}_V)|
    \lesssim\mathcal{N}_{\partial T}(\bvec{v}_{\ET}),
  \end{equation}
  where the conclusion follows noticing that, for all $E\in\ET$ and all $V\in\VE$,
  ${|\partial_{\tangent_E} v_{\normal,E}(\bvec{x}_V)|}
  = {|\btens{G}_{\bvec{v},V}\tangent_E\cdot\normal_E|}
  = {|\mathbb{C}\btens{G}_{\bvec{v},V}\normal_E\cdot\normal_E|}
  \lesssim |\mathbb{C}\btens{G}_{\bvec{v},V}|$ and using $\card(\ET)\lesssim 1$.

  Let us now turn to $\term_2$. 
  Let $\varphi\in\Poly{k}(\ET)$ be such that $\int_{\partial T}\varphi = 0$.
  For all $V\in\VT$ shared by the edges $E_1,\,E_2\in\ET$ numbered so that $E_2$ follows $E_1$ travelling along $\partial T$ according to its orientation, define the jump $\jump{\varphi}\coloneq\varphi_{|E_2} - \varphi_{E_1}$.
  Then, it holds
  \begin{equation}\label{eq:poincare.dg}
    \norm[L^2(\partial T)]{\varphi} 
    \lesssim h_T\norm[L^2(\partial T)]{\partial_{\tangent_{\partial T}}\varphi}
    + h_T^{\nicefrac12}\sum_{V\in\VT}|\jump{\varphi}|.
  \end{equation}
  Apply this inequality to $\varphi = \partial_{\tangent_{\partial T}} v_{\tangent,\partial T}$ and denote by $\term_{2,1}$ and $\term_{2,2}$ the terms in the right-hand side.
  Clearly, $\term_{2,1} = \norm[L^2(\partial T)]{\partial_{\tangent_{\partial T}}^2v_{\tangent,\partial T}}\le\mathcal{N}_{\partial T}(\bvec{v}_{\ET})$.
  For the second contribution, we start by noticing that, for all $E\in\ET$ and all $V\in\VE$, $\partial_{\tangent_{\partial T}}v_{\tangent,\partial T}(\bvec{x}_V) = -\mathbb{C}\btens{G}_{\bvec{v},V}\normal_E\cdot\tangent_E + \frac12\tr\btens{G}_{\bvec{v},V}$ so that, in particular, for all $V\in\VT$,
  $\jump{\partial_{\tangent_{\partial T}} v_{\tangent,\partial T}}
  = \mathbb{C}\btens{G}_{\bvec{v},V}\normal_{E_1}\cdot\tangent_{E_1}
  - \mathbb{C}\btens{G}_{\bvec{v},V}\normal_{E_2}\cdot\tangent_{E_2}$.
  Using this fact along with $\card(\ET)\lesssim 1$, we conclude that $|\term_{2,2}|\lesssim h_T^{\nicefrac12}\sum_{V\in\VT}|\mathbb{C}\btens{G}_{\bvec{v},V}|\le\mathcal{N}_{\partial T}(\bvec{v}_{\ET})$.
  Gathering the above estimates on $\term_{2,1}$ and $\term_{2,2}$ finally gives $\term_2\lesssim\mathcal{N}_{\partial T}(\bvec{v}_{\ET})$ which, combined with \eqref{eq:est.boundary.fct:term1}, yields \eqref{eq:est.boundary.fct}.
\end{proof}

\begin{proposition}[Estimate of the discrete sym-curl norm of the vector potential]\label{lem:est.H1norm.PVT}
  For all $T\in\Th$ and all $\uvec{v}_T\in\uvec{V}_T^k$, it holds
  \begin{equation}\label{eq:est.H1norm.PVT}
    \norm[\btens{L}^2(T;\Real^{2\times 2})]{\SYM\CURL\PV{k}\uvec{v}_T}
    + \sum_{E\in\ET}h_T^{-\nicefrac12}\norm[\bvec{L}^2(E;\Real^2)]{\PV{k}\uvec{v}_T - \bvec{v}_{\ET}}
    \lesssim\tnorm[\btens{\Sigma},T]{\uCsym{k-1}\uvec{v}_T}.
  \end{equation}  
\end{proposition}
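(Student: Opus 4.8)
The plan is to control separately the two terms on the left of \eqref{eq:est.H1norm.PVT} and, up to an absorption argument, to reduce everything to a bound on the edge residual $\PV{k}\uvec{v}_T-\bvec{v}_{\ET}$ over $\partial T$. First I would establish, for any $\btens{\tau}_T\in\tPoly{k-1}(T;\Symm)$, the identity
\[
  \int_T\left(\SYM\CURL\PV{k}\uvec{v}_T-\Csym{k-1}\uvec{v}_T\right):\btens{\tau}_T
  = \sum_{E\in\ET}\omega_{TE}\int_E\left(\PV{k}\uvec{v}_T-\bvec{v}_{\ET}\right)\cdot(\btens{\tau}_T\,\tangent_E),
\]
obtained by applying the integration by parts formula \eqref{eq:ibp.VT} to $\bvec{v}=\PV{k}\uvec{v}_T$ and subtracting the defining relation \eqref{eq:PVT} of $\PV{k}$ (valid on $\tPoly{k-1}(T;\Symm)$ by Remark \ref{rem:validity.PVT}), the volume contributions $\int_T\PV{k}\uvec{v}_T\cdot\VROT\btens{\tau}_T$ cancelling. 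Taking $\btens{\tau}_T=\SYM\CURL\PV{k}\uvec{v}_T-\Csym{k-1}\uvec{v}_T\in\tPoly{k-1}(T;\Symm)$ and using Cauchy--Schwarz on the edges, a discrete trace inequality, and $h_E\simeq h_T$ gives
\begin{equation}\label{eq:plan.residual}
  \norm[\btens{L}^2(T;\Real^{2\times 2})]{\SYM\CURL\PV{k}\uvec{v}_T-\Csym{k-1}\uvec{v}_T}
  \lesssim\sum_{E\in\ET}h_T^{-\nicefrac12}\norm[\bvec{L}^2(E;\Real^2)]{\PV{k}\uvec{v}_T-\bvec{v}_{\ET}}.
\end{equation}

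On the other hand, the commutation property \eqref{eq:PSigmaT.circ.uCsymT=CsymT} gives $\Csym{k-1}\uvec{v}_T=\PSigmaT{k-1}(\uCsym{k-1}\uvec{v}_T)$, so that \eqref{eq:L2 product in ET} (the stabilisation term being nonnegative) together with the norm equivalence \eqref{eq:norm.equivalence} yields $\norm[\btens{L}^2(T;\Real^{2\times 2})]{\Csym{k-1}\uvec{v}_T}\lesssim\tnorm[\btens{\Sigma},T]{\uCsym{k-1}\uvec{v}_T}$. Combining this with \eqref{eq:plan.residual} and a triangle inequality, the entire left-hand side of \eqref{eq:est.H1norm.PVT} is controlled by $\tnorm[\btens{\Sigma},T]{\uCsym{k-1}\uvec{v}_T}$ as soon as one proves
\begin{equation}\label{eq:plan.goal}
  \sum_{E\in\ET}h_T^{-\nicefrac12}\norm[\bvec{L}^2(E;\Real^2)]{\PV{k}\uvec{v}_T-\bvec{v}_{\ET}}
  \lesssim\tnorm[\btens{\Sigma},T]{\uCsym{k-1}\uvec{v}_T}.
\end{equation}

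Proving \eqref{eq:plan.goal} is the core of the argument. I would first note that \eqref{eq:est.H1norm.PVT} is invariant under $\uvec{v}_T\mapsto\uvec{v}_T-\IV{k}\bvec{a}$ for $\bvec{a}\in\RT{1}(T)$: by \eqref{eq:PVT.consistency} this shifts $\PV{k}\uvec{v}_T$ by $\bvec{a}$ and $\bvec{v}_{\ET}$ by $\bvec{a}_{|\partial T}$ (so the residual is unchanged), while $\uCsym{k-1}(\IV{k}\bvec{a})=\bvec{0}$ because $\SYM\CURL\bvec{a}=\mathbb{C}\GRAD\bvec{a}=\bvec{0}$ on $\RT{1}(T)$. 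This three-dimensional gauge freedom is used to normalise $\uvec{v}_T$ so that $\PV{k}\uvec{v}_T$ is $\bvec{L}^2(T)$-orthogonal to $\RT{1}(T)$; since $\RT{1}(T)$ is precisely the kernel of $\SYM\CURL$ on $\bvec{H}^1(T;\Real^2)$, a Poincaré-type inequality (made uniform by a scaling argument) then gives $\norm[\bvec{L}^2(T;\Real^2)]{\PV{k}\uvec{v}_T}\lesssim h_T\norm[\btens{L}^2(T;\Real^{2\times 2})]{\SYM\CURL\PV{k}\uvec{v}_T}$. Writing $\bvec{d}_{\ET}\coloneq(\PV{k}\uvec{v}_T)_{|\partial T}-\bvec{v}_{\ET}\in\vPoly{k}(\ET;\Real^2)\cap\bvec{C}^0(\partial T;\Real^2)$, I would estimate $\norm[\bvec{L}^2(E;\Real^2)]{\bvec{d}_{\ET}}$ by a Poincaré--Wirtinger argument on the connected closed curve $\partial T$: the mean of $\bvec{d}_{\ET}$ over $\partial T$ is bounded by a discrete trace inequality together with the Poincaré bound just obtained, while the zero-mean part is bounded by $h_T\norm[\bvec{L}^2(\partial T;\Real^2)]{\partial_{\tangent_{\partial T}}\bvec{d}_{\ET}}$. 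For the latter I would split $\partial_{\tangent_{\partial T}}\bvec{d}_{\ET}=\partial_{\tangent_{\partial T}}(\PV{k}\uvec{v}_T)_{|\partial T}-\partial_{\tangent_{\partial T}}\bvec{v}_{\ET}$: the term $\partial_{\tangent_{\partial T}}\bvec{v}_{\ET}$ is handled by Proposition \ref{prop:est.boundary.fct}, whose hypothesis $\int_{\partial T}\partial_{\tangent_{\partial T}}v_{\tangent,\partial T}=0$ can be enforced at the outset by subtracting a suitable multiple of $\IV{k}\bvec{x}$ (with $\bvec{x}\in\RT{1}(T)$); the component of $\partial_{\tangent_{\partial T}}(\PV{k}\uvec{v}_T)_{|\partial T}$ along $\normal_E$ on each edge is a fixed linear function of $(\SYM\CURL\PV{k}\uvec{v}_T)_{|E}$ (because $\mathbb{C}\GRAD\bvec{v}$ determines $\partial_{\tangent_E}\bvec{v}\cdot\normal_E$), hence bounded via a discrete trace inequality; and its component along $\tangent_E$, which additionally involves $\DIV\PV{k}\uvec{v}_T$ together with the vertex modes of $\bvec{v}_{\ET}$ not directly seen by $\uCsym{k-1}\uvec{v}_T$, is estimated by testing \eqref{eq:PVT} against suitably chosen $\btens{\tau}_T\in\cHoly{k+1}(T)$ (exploiting that $\VROT:\cHoly{k+1}(T)\to\vPoly{k}(T;\Real^2)$ is an isomorphism with continuous inverse, see Proposition \ref{prop:continuity.inverses} and \eqref{eq:est.vrot.-1}) and invoking the one-variate estimate \eqref{eq:est.normL2.1d} on each edge. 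Finally, the copies of $\sum_{E\in\ET}h_T^{-\nicefrac12}\norm[\bvec{L}^2(E;\Real^2)]{\PV{k}\uvec{v}_T-\bvec{v}_{\ET}}$ that reappear along the way (through \eqref{eq:plan.residual} inserted into the Poincaré bound for $\PV{k}\uvec{v}_T$) enter with a bounded prefactor and can be moved to the left-hand side of \eqref{eq:plan.goal}, closing the estimate.

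The main obstacle is this last step, and within it the book-keeping of the low-frequency (``constant'' and tangential--tangential) modes of $\bvec{d}_{\ET}$ that are invisible to $\SYM\CURL\PV{k}\uvec{v}_T$: one must show that these cancel, up to terms bounded by $\tnorm[\btens{\Sigma},T]{\uCsym{k-1}\uvec{v}_T}$, against the corresponding modes of $\bvec{v}_{\ET}$, which requires carefully coordinating the defining relation \eqref{eq:PVT}, the $\RT{1}(T)$-normalisation of $\PV{k}\uvec{v}_T$, Proposition \ref{prop:est.boundary.fct}, and the final absorption.
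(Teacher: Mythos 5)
Your opening steps coincide with the paper's proof: the identity obtained by subtracting \eqref{eq:PVT} (valid on $\tPoly{k-1}(T;\Symm)$ by Remark \ref{rem:validity.PVT}) from the integration by parts formula \eqref{eq:ibp.VT} applied to $\PV{k}\uvec{v}_T$, the choice of the test tensor, and the bound $\norm[\btens{L}^2(T;\Real^{2\times 2})]{\Csym{k-1}\uvec{v}_T}\lesssim\tnorm[\btens{\Sigma},T]{\uCsym{k-1}\uvec{v}_T}$ via \eqref{eq:PSigmaT.circ.uCsymT=CsymT} and \eqref{eq:norm.equivalence} are exactly how the paper reduces \eqref{eq:est.H1norm.PVT} to the edge-residual bound. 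The core step, however, contains genuine gaps. First, your gauge fixing is over-determined: $\RT{1}(T)$ is three-dimensional, and imposing $L^2(T)$-orthogonality of $\PV{k}\uvec{v}_T$ to all of $\RT{1}(T)$ (which your volume Poincar\'e inequality requires, its kernel being the full $\RT{1}(T)$) already spends the three parameters, so you cannot additionally enforce $\int_{\partial T}\partial_{\tangent_{\partial T}}v_{\tangent,\partial T}=0$ (the hypothesis of Proposition \ref{prop:est.boundary.fct}) by subtracting a further multiple of $\IV{k}\bvec{x}$: that subtraction destroys the orthogonality to the radial part of $\RT{1}(T)$, and if you keep only orthogonality to constants plus the boundary condition, the Poincar\'e inequality you invoke fails (the radial mode has zero symmetric curl and nonzero norm). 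Second, splitting $\partial_{\tangent_{\partial T}}\big(\PV{k}\uvec{v}_T-\bvec{v}_{\ET}\big)$ into its two pieces discards precisely the cancellation you need: the tangential--tangential part $\partial_{\tangent_E}(\PV{k}\uvec{v}_T\cdot\tangent_E)$ is not a function of $\SYM\CURL\PV{k}\uvec{v}_T$ and is not controlled by $\tnorm[\btens{\Sigma},T]{\uCsym{k-1}\uvec{v}_T}$ in general (take $\uvec{v}_T=\IV{k}\bvec{x}$, for which the right-hand side vanishes while this quantity does not), so its control must come either from the radial-mode orthogonality you have already spent or from cancellation against the corresponding mode of $\bvec{v}_{\ET}$; your sketch of recovering it ``by testing \eqref{eq:PVT} against suitably chosen $\btens{\tau}_T\in\cHoly{k+1}(T)$'' is precisely the missing argument, as you yourself acknowledge. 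Third, the closing absorption cannot work as stated: the copies of the edge residual re-enter through your intermediate bound on $\SYM\CURL\PV{k}\uvec{v}_T$ with order-one constants from trace and Poincar\'e inequalities, so you end up with an inequality of the form $X\lesssim A+CX$ with $C$ not small, and the term cannot be moved to the left-hand side.

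The paper avoids all three issues by normalising on the boundary rather than in the volume: it selects $\bvec{w}\in\RT{1}(T)$ matching $\int_{\partial T}\bvec{v}_{\ET}$ and $\int_{\partial T}\partial_{\tangent_{\partial T}}v_{\tangent,\partial T}$ (see \eqref{eq:w}; these two conditions are compatible because they are triangular in the parameters of $\RT{1}(T)$), bounds $\bvec{v}_{\ET}-\bvec{w}$ by a Poincar\'e--Wirtinger inequality on $\partial T$ combined with Proposition \ref{prop:est.boundary.fct} applied to $\uvec{v}_T-\IV{k}\bvec{w}$ (using $\uCsym{k-1}\IV{k}\bvec{w}=\uvec{0}$), and bounds $\PV{k}(\uvec{v}_T-\IV{k}\bvec{w})$ in the volume by testing \eqref{eq:PVT} with $\btens{\tau}\in\cHoly{k+1}(T)$ such that $\VROT\btens{\tau}=\PV{k}(\uvec{v}_T-\IV{k}\bvec{w})$ and invoking \eqref{eq:est.vrot.-1}; since only $\Csym{k-1}\uvec{v}_T$ (already bounded) and the previously estimated boundary difference appear on the right, no self-referential term arises and no absorption is needed. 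You should replace your core step by this, or an equivalently non-circular, argument.
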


\begin{proof}
  Recalling Remark \ref{rem:validity.PVT} to write \eqref{eq:PVT} for $\btens{\tau}\in\tPoly{k-1}(T;\Symm)$ and using the integration by parts formula \eqref{eq:ibp.VT} for the left-hand side of the resulting expression, we have
  \[
  \begin{aligned}
    \int_T\SYM\CURL\PV{k}\uvec{v}_T:\btens{\tau}
    = \int_T\Csym{k-1}\uvec{v}_T:\btens{\tau}
    + \sum_{E\in\ET}\omega_{TE}\int_E(\PV{k}\uvec{v}_T - \bvec{v}_{\ET})\cdot(\btens{\tau}~\tangent_E).
  \end{aligned}
  \]
  Taking $\btens{\tau} = \SYM\CURL\PV{k}\uvec{v}_T$, using Cauchy--Schwarz and discrete trace inequalities in the right-hand side, and simplifying, we infer that
  \begin{equation}\label{eq:est.H1norm.PVT:basic}
    \begin{aligned}
      \norm[\btens{L}^2(T;\Real^{2\times 2})]{\SYM\CURL\PV{k}\uvec{v}_T}
      &\lesssim\norm[\btens{L}^2(T;\Real^{2\times 2})]{\Csym{k-1}\uvec{v}_T}
      + \sum_{E\in\ET} h_T^{-\nicefrac12} \norm[\bvec{L}^2(E;\Real^2)]{\PV{k}\uvec{v}_T - \bvec{v}_{\ET}}
      \\
      &\eqcolon\term_1 + \term_2.
    \end{aligned}
  \end{equation}
  We proceed to estimate the terms in the right-hand side.
  \smallskip\\
  \underline{(i) \emph{Estimate of $\term_1$.}}
  Using, in this order, \eqref{eq:PSigmaT.circ.uCsymT=CsymT}, the definitions \eqref{eq:norms.V.Sigma} of $\norm[\btens{\Sigma},T]{{\cdot}}$ and \eqref{eq:L2 product in ET} of the discrete $L^2$-product in $\utens{\Sigma}_T^{k-1}$,
  and the norm equivalence \eqref{eq:norm.equivalence}, we can write:
  For all $\uvec{v}_T\in\uvec{V}_T^k$,
  \begin{equation}\label{eq:est.H1norm.PVT:term1}
    \term_1
    = \norm[\btens{L}^2(T;\Real^{2\times 2})]{\PSigmaT{k-1}\uCsym{k-1}\uvec{v}_T}
    \le \norm[\btens{\Sigma},T]{\uCsym{k-1}\uvec{v}_T}
    \lesssim\tnorm[\btens{\Sigma},T]{\uCsym{k-1}\uvec{v}_T}.
  \end{equation}
  \\
  \underline{(ii) \emph{Estimate of $\term_2$.}}
  Let $\bvec{w}\in\RT{1}(T)$ be such that
  \begin{equation}\label{eq:w}
    \text{
      $\int_{\partial T}\bvec{w} = \int_{\partial T}\bvec{v}_{\ET}$\quad
      and\quad $\int_{\partial T}\partial_{\tangent_{\partial T}} w_{\tangent,\partial T}
      = \int_{\partial T}\partial_{\tangent_{\partial T}} v_{\tangent_{\partial T}}$.
    }
  \end{equation}
  To check that it is possible to match these conditions, write $\bvec{w}(\bvec{x}) = \bvec{z} + (\bvec{x} - \bvec{x}_{\partial T})q$ with $\bvec{z}\in\Real^2$, $q\in\Real$, and $\bvec{x}_{\partial T}\coloneq\frac{1}{|\partial T|}\int_{\partial T}\bvec{x}$, and notice that the first condition in \eqref{eq:w} yields $\bvec{z} = \frac{1}{|\partial T|}\int_{\partial T}\bvec{v}_{\ET}$, while the second one is fulfilled taking 
  $q = \frac{1}{|\partial T|}\int_{\partial T}\partial_{\tangent_{\partial T}} v_{\tangent_{\partial T}}$. 
  
  Using a triangle inequality, we have
  \[
  \term_2
  \lesssim
  \sum_{E\in\ET} h_T^{-\nicefrac12}\norm[\bvec{L}^2(E;\Real^2)]{\bvec{w} - \bvec{v}_{\ET}}
  + \sum_{E\in\ET} h_T^{-\nicefrac12}\norm[\bvec{L}^2(E;\Real^2)]{\PV{k}\uvec{v}_T - \bvec{w}}
  \eqcolon\term_{2,1} + \term_{2,2}.
  \]
  Starting with $\term_{2,1}$, noticing that $\bvec{w} - \bvec{v}_{\ET}\in\bvec{C}^0(\partial T;\Real^2)$ has zero average on $\partial T$, applying a Poincar\'e--Wirtinger inequality on $\partial T$ as in \cite[Point 1. of Lemma 7]{Di-Pietro.Droniou:22}, and concluding with Proposition \ref{prop:est.boundary.fct} gives
  \begin{equation}\label{eq:est.H1norm.PVT:term21}
    \term_{2,1}\lesssim
    h_T^{\nicefrac12}
    \norm[\bvec{L}^2(\partial_T;\Real^2)]{\partial_{\tangent_{\partial T}}(\bvec{v}_{\ET}-\bvec{w})}
    \lesssim\tnorm[\btens{\Sigma},T]{\uCsym{k-1}(\uvec{v}_T-\IV{k}\bvec{w})}
    = \tnorm[\btens{\Sigma},T]{\uCsym{k-1}\uvec{v}_T},
  \end{equation}
  where, in the second step, we have additionally used the consistency of the boundary reconstruction \eqref{eq:v.ET} applied to $\IV{k}\bvec{w}$, while the conclusion follows recalling that $\uCsym{k-1}\IV{k}\bvec{w} = \uvec{0}$ by the local complex property for the DDR sequence.

  Let us now consider $\term_{2,2}$.
  By polynomial consistency \eqref{eq:PVT.consistency} of $\PV{k}$, it holds $\PV{k}\IV{k}\bvec{w} = \bvec{w}$, hence
  \begin{equation}\label{eq:est.H1norm.PVT:term22:basic}
    \term_{2,2}
    = \sum_{E\in\ET}h_T^{-\nicefrac12}\norm[\bvec{L}^2(E;\Real^2)]{\PV{k}(\uvec{v}_T - \IV{k}\bvec{w})}
    \lesssim h_T^{-1}\norm[\bvec{L}^2(T;\Real^2)]{\PV{k}(\uvec{v}_T - \IV{k}\bvec{w})},
  \end{equation}
  where the conclusion follows from discrete trace inequalities along with $\card(\ET)\lesssim 1$.
  Taking, in the definition \eqref{eq:PVT} of $\PV{k}$, $\btens{\tau}\in\cHoly{k+1}(T)$ such that $\VROT\btens{\tau} = \PV{k}(\uvec{v}_T - \IV{k}\bvec{w})$ (this is possible since $\VROT:\cHoly{k+1}(T)\to\vPoly{k}(T)$ is surjective by Proposition \ref{prop:continuity.inverses}) and using Cauchy--Schwarz and discrete trace inequalities in the right-hand side along with \eqref{eq:est.vrot.-1} to write $\norm[\btens{L}^2(T;\Real^{2\times 2})]{\btens{\tau}}\lesssim h_T\norm[\bvec{L}^2(T;\Real^2)]{\VROT\btens{\tau}} = h_T\norm[\bvec{L}^2(T;\Real^2)]{\PV{k}(\uvec{v}_T - \IV{k}\bvec{w})}$, we obtain, after simplification,
  \[
  \begin{aligned}
    \norm[\bvec{L}^2(T;\Real^2)]{\PV{k}(\uvec{v}_T - \IV{k}\bvec{w})}
    &\lesssim h_T\left(
    \norm[\btens{L}^2(T;\Real^{2\times 2})]{\Csym{k-1}\uvec{v}_T}
    + h_T^{-\nicefrac12}\norm[\bvec{L}^2(\partial T;\Real^2)]{\bvec{v}_{\ET} - \bvec{w}}
    \right)
    \\
    &\lesssim h_T\tnorm[\btens{\Sigma},T]{\uCsym{k-1}\uvec{v}_T},
  \end{aligned}
  \]
  where the conclusion follows using, respectively, \eqref{eq:est.H1norm.PVT:term1} and \eqref{eq:est.H1norm.PVT:term21} to estimate the terms in parentheses.
  Plugging this estimate into \eqref{eq:est.H1norm.PVT:term22:basic}, we finally get
  $\term_{2,2}\lesssim\tnorm[\btens{\Sigma},T]{\uCsym{k-1}\uvec{v}_T}$,
  which, combined with \eqref{eq:est.H1norm.PVT:term21}, gives
  \begin{equation}\label{eq:est.H1norm.PVT:term2}
    \term_2\lesssim\tnorm[\btens{\Sigma},T]{\uCsym{k-1}\uvec{v}_T}.
  \end{equation}
  \\
  \underline{(iii) \emph{Conclusion.}}
  Plug \eqref{eq:est.H1norm.PVT:term1} and \eqref{eq:est.H1norm.PVT:term2} into \eqref{eq:est.H1norm.PVT:basic} to estimate the first term in the left-hand side of \eqref{eq:est.H1norm.PVT} and notice that the estimate of the second term in the left-hand side of \eqref{eq:est.H1norm.PVT} is precisely \eqref{eq:est.H1norm.PVT:term2}.
\end{proof}

\subsubsection{Proof of the discrete Poincar\'e inequalities}\label{sec:poincare:proof}

\begin{proof}[Proof of Lemma \ref{lem:poincare}]
\underline{(i) \emph{Poincar\'e inequality \eqref{eq:poincare.V}} for $\uCsym[h]{k-1}$.} 
Let $\uvec{v}_h\in\uvec{V}_h^k$ be such that $\int_\Omega{\PVh{k}\uvec{v}_h\cdot\bvec{w}} = 0$ for all $\bvec{w}\in\RT{1}(\Omega)$, 
with the global reconstruction operator $\PVh{k}$ defined such that $(\PVh{k}\uvec{v}_h)_{|T} = \PV{k}\uvec{v}_T$ for all $T\in\Th$.
Owing to the uniform norm equivalence \eqref{eq:norm.equivalence}, the definitions \eqref{eq:norms.V.Sigma} of the $\norm[\bvec{V},T]{{\cdot}}$-norm and \eqref{eq:SVT} of the stabilisation bilinear form, and the fact that $h_T\lesssim 1$ for all $T\in\Th$, we infer
\begin{equation}\label{eq:poinV_start}
\begin{aligned}
  \tnorm[\bvec{V},h]{\uvec{v}_h}^2
  \lesssim \norm[\bvec{V},h]{\uvec{v}_h}^2
  &= \norm[\bvec{L}^2(\Omega;\Real^2)]{\PVh{k}\uvec{v}_h}^2 + \sum_{T\in\Th} s_{\bvec{V},T}(\uvec{v}_T,\uvec{v}_T) \\
  &\lesssim  \norm[\bvec{L}^2(\Omega;\Real^2)]{\PVh{k}\uvec{v}_h}^2 + \sum_{T\in\Th}\sum_{E\in\ET}h_T^{-1} \norm[\bvec{L}^2(E;\Real^2)]{\PV{k}\uvec{v}_T - \bvec{v}_{\ET}}^2.
\end{aligned}
\end{equation}
We notice that, for all neighboring elements $T_1, T_2\in\Th$ sharing the internal edge $E$, we have $(\bvec{v}_{\mathcal{E}_{T_1}})_{|E} = (\bvec{v}_{\mathcal{E}_{T_2}})_{|E}\eqcolon\hat{\bvec{v}}_E$.
Letting, for any boundary edge $E\subset\partial T$, $\hat{\bvec{v}}_E\coloneq(\bvec{v}_{\mathcal{E}_T})_{|E}$ and applying the second inequality of Proposition \ref{prop:Poin-Korn_hybrid} below to the hybrid vector field $\uvec{u}_h = \big((\PV{k}\uvec{v}_T)_{T\in\Th}, (\hat{\bvec{v}}_E)_{E\in\Eh}\big)$, we obtain
$$
 \norm[L^2(\Omega;\Real^2)]{\PVh{k}\uvec{v}_h}^2 \lesssim
\sum_{T\in\Th}\left(\norm[\btens{L}^2(\Omega;\Real^{2\times2})]{\SYM\CURL\PV{k}\uvec{v}_T}^2
 + \sum_{E\in\ET}h_T^{-1}\norm[\bvec{L}^2(E;\Real^2)]{\PV{k}\uvec{v}_T - \bvec{v}_{\ET}}^2\right).
$$
 Plugging the previous bound into \eqref{eq:poinV_start} and using Proposition \ref{lem:est.H1norm.PVT}, \eqref{eq:poincare.V} follows.
\medskip\\
\underline{(ii) \emph{Poincar\'e inequality \eqref{eq:poincare.Sigma}} for $\DD[h]{k-2}$.}  
Let $\utens{\tau}_h\in \big(\ker \DD[h]{k-2} \big)^{\perp}$, where the space $\big(\ker \DD[h]{k-2}\big)^{\perp} \subset \utens{\Sigma}_h^{k-1}$ denotes the orthogonal of $\ker \DD[h]{k-2}$ with respect to the inner product $[\cdot,\cdot]_{\btens{\Sigma},h}$. 
Owing to the surjectivity of the operator $\DIV\VDIV: \btens{H}^2(\Omega;\Symm)\to L^2(\Omega)$ (cf. \cite[Theorem 3.25]{Pauly.Zulehner:20}), the commutation property stated in \cite[Eq. (19)]{Di-Pietro.Droniou:22*1}, and the boundedness of the global interpolator $\ISigma[h]{k-1}$ resulting from \eqref{eq:ISigma:boundedness}, we infer the existence of a tensor field $\btens{\tau}\in \btens{H}^2(\Omega;\Symm)$ such that
\begin{equation}\label{eq:tnorm.eps}
\DD[h]{k-2} \utens{\tau}_h = \DIV\VDIV \btens{\tau} = \DD[h]{k-2}(\ISigma[h]{k-1}\btens{\tau}) \;\text{ and }\;
\tnorm[\btens{\Sigma},h]{\ISigma[h]{k-1}\btens{\tau}} \lesssim \norm[\btens{H}^2(\Omega;\Symm)]{\btens{\tau}} \lesssim \norm[\btens{L}^2(\Omega)]{\DD[h]{k-2} \utens{\tau}_h}.
\end{equation}
Therefore, we have that $\utens{\tau}_h-\ISigma[h]{k-1}\btens{\tau} \in \ker \DD[h]{k-2}$, i.e.,
\[
[\utens{\tau}_h-\ISigma[h]{k-1}\btens{\tau},\utens{\upsilon}_h]_{\btens{\Sigma},h}=0 \qquad\forall \utens{\upsilon}_h\in \big(\ker \DD[h]{k-2} \big)^{\perp}, 
\]
namely, $\utens{\tau}_h$ can be seen as the $[\cdot,\cdot]_{\btens{\Sigma},h}$-orthogonal projection of $\ISigma[h]{k-1}\btens{\tau}$ on the space $\big(\ker \DD[h]{k-2}\big)^{\perp}$.
Thus, the norm induced by $[\cdot,\cdot]_{\btens{\Sigma},h}$ of $\utens {\tau}_h$ is bounded by that of $\ISigma[h]{k-1}\btens{\tau}$, and the assumed uniform equivalence between the induced norm and $\tnorm[\btens{\Sigma},h]{{\cdot}}$ along with the inequality in \eqref{eq:tnorm.eps} yields the result.
\end{proof}

\subsection{Consistency of the discrete $L^2$-products}

\begin{lemma}[Consistency of the discrete $L^2$-products]\label{lem:consistency}
  The discrete $L^2$-products satisfy the following consistency properties:
  \begin{enumerate}
  \item For all $\bvec{w}\in\bvec{H}^3(\Omega;\Real^2)$, define the linear form $\ErrV(\bvec{w};\cdot):\uvec{V}_h^k\to\Real$ such that
    \[
    \ErrV(\bvec{w};\uvec{v}_h)
    \coloneq\sum_{T\in\Th}\int_T\bvec{w}\cdot\PV{k}\uvec{v}_T
    - (\IV[h]{k}\bvec{w}, \uvec{v}_h)_{\bvec{V},h}
    \qquad\forall\uvec{v}_h\in\uvec{V}_h^k.
    \]
    Then, under the additional regularity $\bvec{w}\in\bvec{H}^{k+1}(\Th;\Real^2)$, it holds
    \begin{equation}\label{eq:ErrV:est}
      \sup_{\uvec{v}_h\in\uvec{V}_h^k\setminus\{\uvec{0}\}}\frac{\ErrV(\bvec{w};\uvec{v}_h)}{\norm[\bvec{V},h]{\uvec{v}_h}}
      \lesssim h^{k+1}\seminorm[\bvec{H}^{k+1}(\Th;\Real^2)]{\bvec{v}}.
    \end{equation}
  \item For all $\btens{\upsilon}\in\btens{H}^2(\Omega;\Symm)$, define the linear form $\ErrSigma(\btens{\upsilon};\cdot):\utens{\Sigma}_h^{k-1}\to\Real$ such that
    \[
    \ErrSigma(\btens{\upsilon};\utens{\tau}_h)
    \coloneq\sum_{T\in\Th}\int_T\btens{\upsilon}:\PSigmaT{k-1}\utens{\tau}_T
    - (\ISigma[h]{k-1}\btens{\upsilon}, \utens{\tau}_h)_{\btens{\Sigma},h}
    \qquad\forall\utens{\tau}_h\in\utens{\Sigma}_h^{k-1}.
    \]
    Then, under the additional regularity $\btens{\upsilon}\in\btens{H}^k(\Th;\Symm)$, it holds
    \begin{equation}\label{eq:ErrS:est}
      \sup_{\utens{\tau}_h\in\utens{\Sigma}^{k-1}_h\setminus\{\utens{0}\}}\frac{\ErrSigma(\btens{\upsilon};\utens{\tau}_h)}{\norm[\btens{\Sigma},h]{\utens{\tau}_h}}
      \lesssim h^k\seminorm[\btens{H}^k(\Th;\Real^{2\times 2})]{\btens{\upsilon}}.
    \end{equation}
  \end{enumerate}
\end{lemma}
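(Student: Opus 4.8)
The two statements are structurally identical, so I would set up a single template and instantiate it twice. For the vector case, the starting point is the polynomial consistency identity \eqref{eq:L2prod.VT:polynomial.consistency}: for any $\bvec{w}\in\vPoly{k}(T;\Real^2)$ one has $(\IV{k}\bvec{w},\uvec{v}_T)_{\bvec{V},T}=\int_T\bvec{w}\cdot\PV{k}\uvec{v}_T$, so the element contribution to $\ErrV(\bvec{w};\uvec{v}_h)$ vanishes whenever $\bvec{w}$ is a degree-$k$ polynomial on $T$. The plan is therefore to localise: write $\ErrV(\bvec{w};\uvec{v}_h)=\sum_{T\in\Th}\big(\int_T\bvec{w}\cdot\PV{k}\uvec{v}_T-(\IV{k}\bvec{w},\uvec{v}_T)_{\bvec{V},T}\big)$, insert $\bvec{w}_T\coloneq\lproj{k}{T}\bvec{w}$ (or any degree-$k$ polynomial approximation, e.g.\ an averaged Taylor polynomial) in each summand, and use \eqref{eq:L2prod.VT:polynomial.consistency} together with linearity of $\IV{k}$ and of the local product to reduce the $T$-contribution to $\int_T(\bvec{w}-\bvec{w}_T)\cdot\PV{k}\uvec{v}_T-(\IV{k}(\bvec{w}-\bvec{w}_T),\uvec{v}_T)_{\bvec{V},T}$.

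Next I would bound each of these two pieces separately. For the first, Cauchy--Schwarz gives $\int_T(\bvec{w}-\bvec{w}_T)\cdot\PV{k}\uvec{v}_T\le\norm[\bvec{L}^2(T;\Real^2)]{\bvec{w}-\bvec{w}_T}\norm[\bvec{L}^2(T;\Real^2)]{\PV{k}\uvec{v}_T}$, and the potential is controlled by the $L^2$-product norm since $\norm[\bvec{L}^2(T;\Real^2)]{\PV{k}\uvec{v}_T}\le\norm[\bvec{V},T]{\uvec{v}_T}$ by the definition \eqref{eq:L2 product in VT} (stabilisation is a nonnegative contribution). For the second, Cauchy--Schwarz for the local product bilinear form plus the norm equivalence \eqref{eq:norm.equivalence} and the interpolator boundedness \eqref{eq:IV:boundedness} give $(\IV{k}(\bvec{w}-\bvec{w}_T),\uvec{v}_T)_{\bvec{V},T}\lesssim\tnorm[\bvec{V},T]{\IV{k}(\bvec{w}-\bvec{w}_T)}\,\norm[\bvec{V},T]{\uvec{v}_T}$, and \eqref{eq:IV:boundedness} expresses $\tnorm[\bvec{V},T]{\IV{k}(\bvec{w}-\bvec{w}_T)}$ through $\norm[\bvec{L}^2(T)]{\bvec{w}-\bvec{w}_T}+h_T\seminorm[\bvec{H}^1(T)]{\bvec{w}-\bvec{w}_T}+h_T^2\seminorm[\bvec{H}^2(T)]{\bvec{w}-\bvec{w}_T}+h_T^3\seminorm[\bvec{H}^3(T)]{\bvec{w}-\bvec{w}_T}$. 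At this point the standard polynomial approximation estimates on shape-regular elements (see, e.g., \cite{Di-Pietro.Droniou:20}) yield $\seminorm[\bvec{H}^m(T)]{\bvec{w}-\bvec{w}_T}\lesssim h_T^{k+1-m}\seminorm[\bvec{H}^{k+1}(T;\Real^2)]{\bvec{w}}$ for $0\le m\le 3$ (using $k\ge3$), so every term collapses to $h_T^{k+1}\seminorm[\bvec{H}^{k+1}(T;\Real^2)]{\bvec{w}}$. Summing over $T$ with Cauchy--Schwarz in the element index and using $h_T\le h$ gives \eqref{eq:ErrV:est}. The tensor case \eqref{eq:ErrS:est} is handled by the same argument, replacing \eqref{eq:L2prod.VT:polynomial.consistency} with the analogous polynomial consistency of the $\btens{\Sigma}$-product (which follows from \eqref{eq:PSigmaT.circ.uCsymT=CsymT}-type consistency of $\PSigmaT{k-1}$ and the polynomial consistency of $s_{\btens{\Sigma},T}$ established in \cite{Di-Pietro.Droniou:22*1}), using $\btens{\upsilon}_T\coloneq\tlproj{k-1}{T}\btens{\upsilon}$, and invoking \eqref{eq:ISigma:boundedness} in place of \eqref{eq:IV:boundedness}; since $\ISigma{k-1}$ only sees up to second derivatives, only the estimates $\seminorm[\btens{H}^m(T)]{\btens{\upsilon}-\btens{\upsilon}_T}\lesssim h_T^{k-m}\seminorm[\btens{H}^k(T;\Real^{2\times2})]{\btens{\upsilon}}$ for $0\le m\le2$ are needed, producing the $h^k$ rate.

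The only point requiring a little care — and the main (mild) obstacle — is the polynomial consistency of the $\btens{\Sigma}$-product: unlike \eqref{eq:L2prod.VT:polynomial.consistency}, which is stated explicitly in the excerpt, the identity $(\ISigma{k-1}\btens{\upsilon},\utens{\tau}_T)_{\btens{\Sigma},T}=\int_T\btens{\upsilon}:\PSigmaT{k-1}\utens{\tau}_T$ for $\btens{\upsilon}\in\tPoly{k-1}(T;\Symm)$ is only implicit. I would establish it from two ingredients: first, $\PSigmaT{k-1}\ISigma{k-1}\btens{\upsilon}=\btens{\upsilon}$ for such $\btens{\upsilon}$ (the polynomial consistency of the tensor potential, proved in \cite{Di-Pietro.Droniou:22*1}), and second, $s_{\btens{\Sigma},T}(\ISigma{k-1}\btens{\upsilon},\utens{\tau}_T)=0$ (polynomial consistency of the stabilisation, same reference); together with \eqref{eq:L2 product in ET} these give the required identity. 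Everything else is a routine chaining of Cauchy--Schwarz, norm equivalences, interpolator boundedness, and Bramble--Hilbert-type approximation, so I would present the vector case in full and then indicate that the tensor case follows verbatim with the obvious substitutions.
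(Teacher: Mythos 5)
Your proposal is correct and follows essentially the same route as the paper's proof: for part 1 the paper likewise inserts $\vlproj{k}{T}\bvec{w}_{|T}$, invokes the polynomial consistency \eqref{eq:L2prod.VT:polynomial.consistency}, and bounds the two resulting terms via Cauchy--Schwarz, the norm equivalence \eqref{eq:norm.equivalence}, the boundedness \eqref{eq:IV:boundedness}, and the approximation properties of the projector, exactly as you do. For part 2 the paper simply defers to the estimate of a term in an external lemma of \cite{Di-Pietro.Droniou:22}, whereas you reconstruct the same argument explicitly (including deriving the polynomial consistency of the $\btens{\Sigma}$-product from the consistency of $\PSigmaT{k-1}$ and of $s_{\btens{\Sigma},T}$ proved in \cite{Di-Pietro.Droniou:22*1}), which is a valid unpacking of the same proof.
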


\begin{proof}
  \underline{(i) \emph{Proof of \eqref{eq:ErrV:est}.}}
  By the polynomial consistency \eqref{eq:L2prod.VT:polynomial.consistency} of the discrete $L^2$-product in $\uvec{V}_T^k$, we can write
  \begin{equation}\label{eq:ErrV:est:basic}
    \ErrV(\bvec{w};\uvec{v}_h)
    = \sum_{T\in\Th}\left(
    \term_1(T) + \term_2(T)
    \right),
  \end{equation}
  where, recalling that $\vlproj{k}{T}$ denotes the $L^2$-orthogonal projector on $\vPoly{k}(T)$,
  \[
  \term_1(T)\coloneq\int_T(\bvec{w} - \vlproj{k}{T}\bvec{w}_{|T})\cdot\PV{k}\uvec{v}_T,\qquad
  \term_2(T)\coloneq(\IV{k}(\bvec{w} - \vlproj{k}{T}\bvec{w}_{|T}),\uvec{v}_T)_{\bvec{V},T}.
  \]
  For the first term, a Cauchy--Schwarz inequality followed by the approximation properties of $\vlproj{k}{T}$ (see, e.g., \cite[Lemma 3.4]{Di-Pietro.Droniou:17} or \cite[Section 1.3.3]{Di-Pietro.Droniou:20}) and the definition \eqref{eq:norms.V.Sigma} of the $\norm[\bvec{V},T]{{\cdot}}$-norm give
  \begin{equation}\label{eq:ErrV:est:T1(T)}
    |\term_1(T)|
    \le\norm[\bvec{L}^2(T;\Real^2)]{\bvec{w} - \vlproj{k}{T}\bvec{w}_{|T}}
    \norm[\bvec{L}^2(T;\Real^2)]{\PV{k}\uvec{v}_T}
    \lesssim h_T^{k+1}\seminorm[\bvec{H}^{k+1}(T;\Real^2)]{\bvec{w}}
    \norm[\bvec{V},T]{\uvec{v}_T}.
  \end{equation}
  For the second term, a Cauchy--Schwarz inequality, the local norm equivalence expressed by \eqref{eq:norm.equivalence} with $\bullet = T$ along with the boundedness \eqref{eq:IV:boundedness} of $\IV{k}$, and again the approximation properties of $\vlproj{k}{T}$ give
  \begin{equation}\label{eq:ErrV:est:T2(T)}
    \begin{aligned}
      |\term_2(T)|
      &\le\norm[\bvec{V},T]{\IV{k}(\bvec{w} - \vlproj{k}{T}\bvec{w}_{|T})}
      \norm[\bvec{V},T]{\uvec{v}_T}
      \\
      &\lesssim\left(
      \sum_{i=0}^3h_T^i\seminorm[\bvec{H}^i(T;\Real^2)]{\bvec{w} - \vlproj{k}{T}\bvec{w}_{|T}}
      \right)\norm[\bvec{V},T]{\uvec{v}_T}
      \\
      &\lesssim
      h_T^{k+1}\seminorm[\bvec{H}^{k+1}(T;\Real^2)]{\bvec{w}}
      \norm[\bvec{V},T]{\uvec{v}_T}.
    \end{aligned}
  \end{equation}
  Using \eqref{eq:ErrV:est:T1(T)} and \eqref{eq:ErrV:est:T2(T)} to bound the terms in the right-hand side of \eqref{eq:ErrV:est:basic}, we obtain \eqref{eq:ErrV:est} after applying a discrete Cauchy--Schwarz inequality on the sum over $T\in\Th$.
  \medskip\\
  \underline{(ii) \emph{Proof of \eqref{eq:ErrS:est}.}}
  The proof coincides with the estimate the term $\term_1$ in the proof of \cite[Lemma 15]{Di-Pietro.Droniou:22} with $\ell = k+1$, to which we refer for further details.
\end{proof}

\subsection{Adjoint consistency of the discrete differential operators}

To state the following theorem, we denote by $\normal$ the normal vector field on $\partial\Omega$ pointing out of $\Omega$ and by $\tangent$ the tangent vector field oriented so that $(\tangent,\normal)$ forms a right-handed coordinate system.

\begin{lemma}[Adjoint consistency]\label{lem:adjoint.consistency}
  The discrete differential operators defined in Section \ref{sec:reconstructions} satisfy the following adjoint consistency properties:
  \begin{enumerate}
  \item Given $\btens{\upsilon}\in\btens{H}^2(\Omega;\Symm)$ such that $\btens{\upsilon}\tangent = \bvec{0}$ on $\partial\Omega$, we define the sym curl adjoint consistency error $ \Errsymcurl:\uvec{V}_h^k \to \Real $ by:
    For all $\uvec{v}_h\in\uvec{V}_h^k$, 
    \[
      \Errsymcurl(\btens{\upsilon}; \uvec{v}_h)
      \coloneq
      (\ISigma[h]{k-1}{\btens{\upsilon}},\uCsym[h]{k-1}\uvec{v}_h)_{\btens{\Sigma},h}
      + \sum_{T\in\Th}\int_{T} \VROT\btens{\upsilon} \cdot \PV{k}\uvec{v}_T.
      \]
      Then, further assuming $\btens{\upsilon}\in\btens{H}^k(\Th;\Symm)$, it holds:
      For all $ \uvec{v}_h \in \uvec{V}_h^k$,
      \begin{equation}\label{eq:adjoint_symcurl}
        |\Errsymcurl (\btens{\upsilon}; \uvec{v}_h)|
        \lesssim h^k \seminorm[\btens{H}^k(\Th;\Real^{2\times 2})]{\btens{\upsilon}} \norm[\btens{\Sigma},h]{\uCsym[h]{k-1}\uvec{v}_h}.
      \end{equation}
    \item Given $q\in H^2(\Omega)$ such that $q = \partial_{\normal} q = 0$ on $\partial\Omega$, we define the div-div adjoint consistency error $\Errdivdiv:\utens{\Sigma}_h^{k-1} \to \Real $ by:
      For all $\utens{\tau}_h \in  \utens{\Sigma}_h^{k-1}$,
      \begin{equation}\label{eq:Edivdivh}
        \Errdivdiv(q;\utens{\tau}_h)
        \coloneq \int_\Omega q~\DD[h]{k-2}\utens{\tau}_h
        - \sum_{T\in\Th}\int_T \HESS q : \PSigmaT{k-1}  \utens{\tau}_T.
      \end{equation}
      Then, further assuming $q \in H^{k+2}(\Omega)$, it holds:
      For all $\utens{\tau}_h \in  \utens{\Sigma}_h^{k-1}$,
      \begin{equation}\label{eq:adjoint_divdiv}
        |\Errdivdiv (q; \utens{\tau}_h)|
        \lesssim h^k \seminorm[H^{k+2}(\Th)]{q} \norm[\btens{\Sigma},h]{\utens{\tau}_h}.
      \end{equation}
  \end{enumerate}
\end{lemma}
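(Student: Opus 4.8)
The plan is to treat the two adjoint consistency estimates separately, in each case exploiting the integration by parts formula that was used to define the relevant discrete operator, together with the polynomial consistency of the corresponding potential and the approximation properties of the $L^2$-orthogonal projectors appearing in the interpolators. For the sym curl estimate \eqref{eq:adjoint_symcurl}, the key observation is that the continuous integration by parts formula \eqref{eq:ibp.VT}, applied to $\btens{\upsilon}$ and the exact potential, reads $\sum_{T}\int_T \bvec{v}\cdot\VROT\btens{\upsilon} = -\sum_T\int_T\SYM\CURL\bvec{v}:\btens{\upsilon} + \sum_T\sum_{E\in\ET}\omega_{TE}\int_E\bvec{v}\cdot(\btens{\upsilon}\tangent_E)$, and the boundary contributions cancel over interior edges by single-valuedness of $\bvec{v}\in\bvec{H}^1$ while they vanish on $\partial\Omega$ because $\btens{\upsilon}\tangent = \bvec0$ there. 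First I would rewrite $\Errsymcurl(\btens{\upsilon};\uvec{v}_h)$ by inserting $\btens{\pi}_{\ctens{H},T}^{k-4}\btens{\upsilon} + \btens{\pi}_{\ctens{H},T}^{\compl,k-1}\btens{\upsilon}$-type projections, use \eqref{eq:P.Sigma.T} (which characterises $\PSigmaT{k-1}$ against $\HESS q_T + \btens{\upsilon}_T$) together with $\PSigmaT{k-1}\circ\uCsym{k-1} = \Csym{k-1}$ from \eqref{eq:PSigmaT.circ.uCsymT=CsymT} and the definition \eqref{eq:CsymT} of $\Csym{k-1}$ to express the discrete $L^2$-product in terms of the boundary reconstruction $\bvec{v}_{\ET}$ and $\Csym{k-1}\uvec{v}_T$; the volumetric term $\sum_T\int_T\VROT\btens{\upsilon}\cdot\PV{k}\uvec{v}_T$ is then paired against the defining relation \eqref{eq:PVT} of $\PV{k}$, after which the $\Csym{k-1}$ terms telescope out and one is left with a sum of boundary consistency residuals of the form $\sum_T\sum_{E\in\ET}\omega_{TE}\int_E(\text{trace data of }\btens{\upsilon} - \text{its projection})\cdot(\text{trace of }\bvec{v}_{\ET}\text{ or of a polynomial test tensor})$ plus stabilisation residuals $s_{\btens{\Sigma},T}(\ISigma{k-1}\btens{\upsilon},\uCsym{k-1}\uvec{v}_T)$. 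Each such residual is bounded, via Cauchy--Schwarz and continuous/discrete trace inequalities, by $h_T^k\seminorm[\btens{H}^k(T)]{\btens{\upsilon}}\,\tnorm[\btens{\Sigma},T]{\uCsym{k-1}\uvec{v}_T}$ using standard projector approximation estimates on the volume and on the edges, and summing with a discrete Cauchy--Schwarz yields \eqref{eq:adjoint_symcurl} after invoking the norm equivalence \eqref{eq:norm.equivalence}.

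For the div-div estimate \eqref{eq:adjoint_divdiv}, I would argue analogously starting from the integration by parts formula \eqref{eq:ibp.SigmaT} for $\btens{\tau}$ and $q$. Since $q\in H^{k+2}(\Omega)$ with $q = \partial_{\normal}q = 0$ on $\partial\Omega$, one may test \eqref{eq:DDT} with $q_T = \lproj{k-2}{T}q$; comparing $\int_\Omega q\,\DD[h]{k-2}\utens{\tau}_h = \sum_T\int_T \DD{k-2}\utens{\tau}_T\,\lproj{k-2}{T}q$ with \eqref{eq:ibp.SigmaT} applied to the exact potential, and using \eqref{eq:P.Sigma.T} to identify $\sum_T\int_T\HESS q:\PSigmaT{k-1}\utens{\tau}_T$ in terms of $\DD{k-2}\utens{\tau}_T$, the edge data $\PSigmaE{k-1}\utens{\tau}_E$, $D_{\btens{\tau},E}$, and the vertex values $\btens{\tau}_V$, the bulk terms cancel and one is left with: (a) a consistency residual in the bulk coming from $\HESS q - \HESS(\text{projection})$ paired against $\PSigmaT{k-1}\utens{\tau}_T$, controlled by $h_T^k\seminorm[H^{k+2}(T)]{q}\,\norm[\btens{L}^2(T)]{\PSigmaT{k-1}\utens{\tau}_T}$; (b) edge residuals of the form $\int_E(\partial_{\normal_E}q - \text{trace of }\partial_{\normal_E}\lproj{k-2}{T}q)\cdot(\PSigmaE{k-1}\utens{\tau}_E - \btens{\tau}\normal_E\cdot\normal_E)$ and similar terms involving $D_{\btens{\tau},E}$ and the boundary vertex values — here the homogeneous boundary conditions on $q$ kill the edge terms that would otherwise survive on $\partial\Omega$; and (c) the interior-edge/vertex contributions, which cancel by single-valuedness of the components of $\utens{\tau}_h$ together with the continuity across edges of $q$, $\partial_{\normal}q$ and their polynomial approximations. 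Bounding each residual by Cauchy--Schwarz, trace inequalities, and projector approximation, and summing with a discrete Cauchy--Schwarz while invoking the norm equivalence \eqref{eq:norm.equivalence}, gives \eqref{eq:adjoint_divdiv}.

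The main obstacle I anticipate is the careful bookkeeping of boundary terms: one must verify that, after all the integrations by parts and insertions of projectors, the contributions from interior edges and interior vertices genuinely cancel (using single-valuedness of edge and vertex DOFs and $H^1$/$H^2$-conformity of the exact fields) and that the contributions from $\partial\Omega$ vanish by the assumed homogeneous boundary conditions ($\btens{\upsilon}\tangent = \bvec0$, resp. $q = \partial_{\normal}q = 0$), leaving only residuals that are genuinely of order $h^k$ because they pair an approximation error against bounded discrete data. A secondary technical point is ensuring the correct $h_T$-weights so that the edge residuals, after trace inequalities, reproduce exactly the scaling built into $\tnorm[\btens{\Sigma},T]{{\cdot}}$ (and $\tnorm[\bvec{V},T]{{\cdot}}$ via the control of $\bvec{v}_{\ET}$ by $\uCsym{k-1}\uvec{v}_T$ established in Proposition \ref{lem:est.H1norm.PVT}); the stabilisation residuals $s_{\btens{\Sigma},T}(\ISigma{k-1}\btens{\upsilon},\cdot)$ and, for the first estimate, the interplay with $s_{\bvec{V},T}$ require the polynomial consistency of the stabilisations, which is available from the construction. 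Both proofs closely parallel the analogous arguments in \cite{Di-Pietro.Droniou:22*1,Di-Pietro.Droniou:22}, so the essential work is adapting that template to the present operators and verifying the cancellations.
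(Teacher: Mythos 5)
Your plan for the sym-curl estimate \eqref{eq:adjoint_symcurl} is essentially the paper's proof: rewrite the error using \eqref{eq:PSigmaT.circ.uCsymT=CsymT} and the relation \eqref{eq:PVT} with piecewise polynomial test tensors (the insertion of $\btens{\upsilon}\tangent_E$ in the edge terms being licit by single-valuedness at interior edges and $\btens{\upsilon}\tangent=\bvec{0}$ on $\partial\Omega$), integrate by parts via \eqref{eq:ibp.VT}, choose the elementwise $L^2$-projection of $\btens{\upsilon}$, and bound the remaining discrete factor by $\norm[\btens{\Sigma},h]{\uCsym[h]{k-1}\uvec{v}_h}$ through Proposition \ref{lem:est.H1norm.PVT} and the stabilisation consistency; this part is sound and matches the paper.

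For the div-div estimate \eqref{eq:adjoint_divdiv} the architecture you describe is again the paper's, but the projection you specify makes a key step fail. You propose $q_T=\lproj{k-2}{T}q$. The identity $\int_T q\,\DD{k-2}\utens{\tau}_T=\int_T \lproj{k-2}{T}q\,\DD{k-2}\utens{\tau}_T$ is exact, but after invoking \eqref{eq:P.Sigma.T} the surviving residuals involve $\HESS(q-q_T)$ in the bulk, $\partial_{\normal_E}(q-q_T)$ and $q-q_T$ on the edges, and $(q-q_T)(\bvec{x}_V)$ at vertices; with $q_T=\lproj{k-2}{T}q$ the projector on $\Poly{k-2}(T)$ cannot deliver better than order $k-1$ in $L^2$ (hence order $k-3$ for the Hessian term), so your claimed bound $h_T^k\seminorm[H^{k+2}(T)]{q}$ in item (a) does not follow and the rate is lost. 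The fix, and what the paper does, is to exploit that \eqref{eq:P.Sigma.T} admits test functions $q_T\in\Poly{k+1}(T)$ and take $q_T=\lproj{k+1}{T}q$ throughout; the cancellation of the bulk pairing with $\DD{k-2}\utens{\tau}_T$ is unaffected since $\DD{k-2}\utens{\tau}_T\in\Poly{k-2}(T)\subset\Poly{k+1}(T)$, and every residual then carries the factor $h^k\seminorm[H^{k+2}(\Th)]{q}$. Two further points: in your edge residual (b) the subtraction of $\btens{\tau}\normal_E\cdot\normal_E$ is spurious, since in this statement $\utens{\tau}_h$ is an arbitrary discrete field with no underlying continuous tensor, and none is needed; and to turn the surviving discrete factors $h_T^2\norm[L^2(T)]{\DD{k-2}\utens{\tau}_T}$, $\norm[\btens{L}^2(T;\Real^{2\times 2})]{\PSigmaT{k-1}\utens{\tau}_T}$, and $h_T^{\nicefrac12}\norm[L^2(E)]{\PSigmaE{k-1}\utens{\tau}_E}$ into $\norm[\btens{\Sigma},h]{\utens{\tau}_h}$ you need their boundedness by the component norm (see \cite[Eqs.~(57)--(59)]{Di-Pietro.Droniou:22*1}) combined with \eqref{eq:norm.equivalence}, not merely trace inequalities.
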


\begin{proof}
  \underline{(i) \emph{Proof of \eqref{eq:adjoint_symcurl}.}}
  By definition \eqref{eq:L2 product in ET} of the local discrete $L^2$-product in $\utens{\Sigma}_h^{k-1}$ and the commutation property \eqref{eq:PSigmaT.circ.uCsymT=CsymT}, it holds that
  \begin{multline}\label{eq:error symcurlh}
    \Errsymcurl (\btens{\upsilon}; \uvec{v}_h)
    =
    \sum_{T\in \Th} \bigg[
      \int_{T} \PSigmaT{k-1}\ISigma{k-1}{\btens{\upsilon}}_{|T} : \Csym{k-1} \uvec{v}_T
      + s_{\btens{\Sigma},T}(\ISigma{k-1}{\btens{\upsilon}}_{|T}, \uCsym{k-1}\uvec{v}_T)
      \\
      + \int_{T} \VROT\btens{\upsilon}\cdot \PV{k}\uvec{v}_T
      \bigg].
  \end{multline}
  Accounting for Remark \ref{rem:validity.PVT}, it holds, for all $(\btens{\tau}_T)_{T\in\Th}\in\bigtimes_{T\in\Th}\tPoly{k-1}(T)$,
  \[
  \sum_{T\in\Th}\bigg[
  \int_T\PV{k}\uvec{v}_T\cdot\VROT\btens{\tau}_T
  + \int_T\Csym{k-1}\uvec{v}_T:\btens{\tau}_T
  - \sum_{E\in\ET}\omega_{TE}\int_E\bvec{v}_{\ET}\cdot(\btens{\tau}_T\,\tangent_E)
  \bigg]
  = 0.
  \]
  Subtracting this expression from \eqref{eq:error symcurlh}, we obtain
  \begin{equation*}
    \begin{aligned}
      \Errsymcurl (\btens{\upsilon}; \uvec{v}_h)
      &= 
      \sum_{T\in \Th}\bigg[
        \int_T \left(\PSigmaT{k-1}\ISigma{k-1}{\btens{\upsilon}}_{|T} - \btens{\tau}_T \right) : \Csym{k-1} \uvec{v}_T 
        +  s_{\btens{\Sigma},T}(\ISigma{k-1}{\btens{\upsilon}}_{|T}, \uCsym{k-1}\uvec{v}_T)
        \bigg]
      \\
      &\qquad
      +\sum_{T\in \Th} \bigg[
        \int_T \VROT(\btens{\upsilon}-\btens{\tau}_T)\cdot\PV{k}\uvec{v}_T +  \sum_{E\in \ET}  \omega_{TE} \int_E \bvec{v}_{\ET}\cdot(\btens{\tau}_T-\btens{\upsilon})\tangent_E
        \bigg],
    \end{aligned}
  \end{equation*}
  where we have additionally introduced $\btens{\upsilon}_{|E}\tangent_E$ into the boundary term using the fact that this quantity is single-valued if $E$ is an internal edge while it vanishes if $E\subset \partial \Omega$.
  Applying the integration by parts formula \eqref{eq:ibp.VT} to the third term leads to
  \begin{multline}\label{eq:error symcurl 2}
    \Errsymcurl (\btens{\upsilon}; \uvec{v}_h)
    =
    \sum_{T\in \Th}\bigg[
      \int_T \left(\PSigmaT{k-1}\ISigma{k-1}{\btens{\upsilon}}_{|T} - \btens{\tau}_T \right) \cdot \Csym{k-1} \uvec{v}_T
      +  s_{\btens{\Sigma},T}(\ISigma{k-1}{\btens{\upsilon}}_{|T}, \uCsym{k-1}\uvec{v}_T)
      \bigg]
    \\
    -\sum_{T\in \Th} \bigg[
      \int_T(\btens{\upsilon}-\btens{\tau}_T) \cdot \SYM\CURL  \PV{k}\uvec{v}_T
      - \sum_{E\in \ET}  \omega_{TE} \int_E (\bvec{v}_{\ET}-\PV{k}\uvec{v}_T)\cdot(\btens{\tau}_T-\btens{\upsilon}) \cdot \tangent_E
      \bigg].
  \end{multline}
  Take now $\btens{\tau}_T = \tlproj{k-1}{T}\btens{\upsilon}_{|T}$ for all $T\in\Th$.
  Using Cauchy-Schwarz inequalities in the right-hand side of \eqref{eq:error symcurl 2}
  followed by the approximation properties of $\PSigmaT{k-1}$ and $\tlproj{k-1}{T}$ (see, respectively, \cite[Proposition 14]{Di-Pietro.Droniou:22*1} and \cite[Theorem 1.45]{Di-Pietro.Droniou:20}) as well as the consistency property of the stabilisation term proved in \cite[Proposition 12]{Di-Pietro.Droniou:22*1}, we get
  \begin{multline*}
    |\Errsymcurl (\btens{\upsilon}; \uvec{v}_h)|
    \lesssim h^k \seminorm[\btens{H}^k(\Th;\Real^{2\times 2})]{\btens{\upsilon}}\bigg[
      \sum_{T\in\Th}\bigg(
      \norm[\btens{L}^2(T;\Real^{2\times 2})]{\Csym{k-1} \uvec{v}_T}^2
      + s_{\btens{\Sigma},T}(\uCsym{k-1}\uvec{v}_T, \uCsym{k-1}\uvec{v}_T)
      \\
      + \norm[\btens{L}^2(T;\Real^{2\times 2})]{\SYM\CURL  \PV{k}\uvec{v}_T}^2
      + h_T^{-1}\sum_{E\in\ET}\norm[\bvec{L}^2(E;\Real^2)]{\bvec{v}_{\ET}-\PV{k}\uvec{v}_T}^2
      \bigg)
      \bigg].
  \end{multline*}
  Let us consider the factor in square brackets.
  Using, respectively,
  \eqref{eq:PSigmaT.circ.uCsymT=CsymT} along with \eqref{eq:est.H1norm.PVT:term1} for the first term,
  the definition \eqref{eq:norms.V.Sigma} of the $L^2$-product norm on $\utens{\Sigma}_h^{k-1}$ for the second term,
  and \eqref{eq:est.H1norm.PVT} for the third and fourth terms,
  this factor is $\lesssim\norm[\btens{\Sigma},h]{\uCsym[h]{k-1}\uvec{v}_h}$,
  thus concluding the proof of \eqref{eq:adjoint_symcurl}.
  \medskip\\
\underline{(ii) \emph{Proof of \eqref{eq:adjoint_divdiv}.}}
Combining the definitions \eqref{eq:Edivdivh} of the adjoint consistency error and \eqref{eq:P.Sigma.T} of the tensor potential, it is inferred that, for all $(q_T)_{T\in\Th}\in\bigtimes_{T\in\Th}\Poly{k+1}(T)$,
\begin{equation*}
  \begin{aligned}
    \Errdivdiv (q,\utens{\tau}_h)
    &= 
    \sum_{T\in \Th}\bigg[
      \int_T (q - q_T)\, \DD{k-2}\utens{\tau}_T
      - \int_T\HESS(q - q_T) : \PSigmaT{k-1}\utens{\tau}_T
      \bigg]
    \\
    &\qquad
    + \sum_{T\in \Th}\omega_{TE}
    \sum_{E\in\ET}\left[
      \int_E \PSigmaE{k-1}\utens{\tau}_E\,\partial_{\normal_E} (q-q_T)
      - \int_E D_{\btens{\tau},E}\,(q-q_T)
    \right]
    \\
    &\qquad
    +
    \sum_{T\in\Th}\sum_{E\in\ET}\omega_{TE}\sum_{V\in\VE}\omega_{EV}(\btens{\tau}_V\normal_E\cdot\tangent_E)\,(q-q_T)(\bvec{x}_V),
  \end{aligned}
\end{equation*}
where the insertion of $q$ and $\partial_{\normal_E} q$ into the boundary integrals is possible since these quantities are continuous at internal edges and vanish on boundary edges.
  Taking $q_T = \lproj{k+1}{T} q$ for all $T\in\Th$ and using Cauchy--Schwarz inequalities followed by the approximation properties of $\lproj{k+1}{T}$, it is inferred that
  \begin{equation*}
    \begin{aligned}
      |\Errdivdiv (q,\utens{\tau}_h)| 
      &\lesssim
      h^k\seminorm[H^{k+2}(\Th)]{q}
      \Bigg\{
      \sum_{T\in\Th}\bigg[
        h_T^4 \norm[L^2(T)]{\DD[T]{k-2}\utens{\tau}_T}^2
        + \norm[\bvec{L}^2(T;\Real^{2\times2})]{\PSigmaT{k-1}\utens{\tau}_T}^2
        \\
        &\quad
        +\sum_{E\in\ET}\Big(
        h_T\norm[L^2(E)]{\PSigmaE{k-1}\utens{\tau}_E}^2
        + h_T^3\norm[L^2(E)]{ D_{\btens{\tau},E}\,}^2
        + \sum_{V\in\VE} h_T^2|\btens{\tau}_V|^2
        \Big)
        \bigg]        
        \Bigg\rbrace^{\frac12}.
    \end{aligned}
  \end{equation*}
  Using \cite[Eq. (57)--(59)]{Di-Pietro.Droniou:22*1} for the first three terms and the definition \eqref{eq:tnorm.Sigma} for the last two, we infer that the quantity in braces is $\lesssim \tnorm[\btens{\Sigma},T]{\utens{\tau}_T}$, hence $\lesssim \norm[\btens{\Sigma},T]{\utens{\tau}_T}$ by the norm equivalence \eqref{eq:norm.equivalence} written for $\bullet = T$, thus concluding the proof of \eqref{eq:adjoint_divdiv}.
\end{proof}


\section{Serendipity DDR complex}\label{sec:serendipity.complex}

Denote, as in the previous section, by $k\ge 3$ the polynomial degree of the discrete complex.
Following \cite{Di-Pietro.Droniou:22}, we consider the construction illustrated in the following diagram:
\begin{equation}\label{eq:discrete.complex}
  \begin{tikzcd}[sep=huge]
    \arrow[<->]{d}{} \RT{1}(\Omega)
    \arrow{r}[above=2pt]{\IV[h]{k}} & \uvec{V}_h^k
    \arrow[bend left, dashed]{d}{\RV{h}}
    \arrow{r}[above=2pt]{\uCsym[h]{k-1}} & \utens{\Sigma}_h^{k-1}
    \arrow[bend left, dashed]{d}{\RS{h}}
    \arrow{r}[above=2pt]{\DD[h]{k-2}} &
    \arrow[<->]{d}{}
    \Poly{k-2}(\Th)
    \arrow{r}[above=2pt]{0} & 0
    \\
    \RT{1}(\Omega)
    \arrow{r}[above=2pt]{\sIV[h]{k}} & \suvec{V}_h^k
    \arrow[bend left]{u}{\EV{h}}
    \arrow{r}[above=2pt]{\suCsym[h]{k-1}} & \sutens{\Sigma}_h^{k-1}
    \arrow[bend left]{u}{\ES{h}}
    \arrow{r}[above=2pt]{\sDD[h]{k-2}} & \Poly{k-2}(\Th)
    \arrow{r}[above=2pt]{0} & 0,
  \end{tikzcd}
\end{equation}
where, according to \cite[Eqs. (2.2) and (2.4)]{Di-Pietro.Droniou:22}, we have set
\begin{equation}\label{eq:s.operators}
  \sIV[h]{k}\coloneq\RV{h}\IV[h]{k},\qquad
  \suCsym[h]{k-1}\coloneq\RS{h}\uCsym[h]{k-1}\EV{h},\qquad
  \sDD[h]{k-2}\coloneq\DD[h]{k-2}\ES{h}.
\end{equation}
The purpose of the rest of this section is to
\begin{itemize}
\item provide a precise definition of the extension and reduction operators $\EV{h}$, $\RV{h}$, $\ES{h}$, $\RS{h}$ as well as the spaces and operators that appear in the bottom (serendipity) complex;
\item prove that the properties of the top complex are inherited by the bottom complex.
\end{itemize}
This latter point makes the object of Theorems \ref{thm:homological.properties} and \ref{thm:analytical.properties} below, which are therefore the main results of this section.

As most of the developments are local, in what follows we denote by $T\in\Th$ a generic mesh element without necessarily specifying this fact at each occurrence.
As usual, a local version of diagram \eqref{eq:discrete.complex} on $T$ is obtained taking the restriction of the spaces and operators collecting the components attached to $T$ and, when present, to the edges and nodes that lie on its boundary.

\subsection{Estimate of symmetric tensor-valued polynomials}

Throughout the rest of this section, we work under the following assumption:

\begin{assumption}[Boundaries selection for serendipity spaces]\label{assum:choice.detaP}
  For each $T\in\Th$ element of the mesh, we select a set $\sET$ of $\eta_T\ge 2$ edges that are not pairwise aligned and such that, for all $E\in\sET$, $T$ lies entirely on one side of the hyperplane $H_E$ spanned by $E$.
  For all $E\in\sET$, denoting by $\bvec{x}_E$ its middle point and defining the scaled distance function to $H_E$ by $d_E(\bvec{x}) = h_E^{-1}\omega_{TE}(\bvec{x}-\bvec{x}_{E})\cdot\normal_E$, we assume the existence of a real number $\theta>0$ such that $d_E(\bvec{x}_{E'})\ge \theta$ for all $E\,,E'\in\sET$, $E\setminus\{E'\}$.
\end{assumption}

From this point on, the hidden constant in $a\lesssim b$ (see Section \ref{sec:setting:mesh}) will possibly depend also on the boundaries selection regularity parameter $\theta$.

\begin{lemma}[Estimate of symmetric tensor-valued polynomials]\label{lem:estimate.symmetric.poly}
  Let $m\ge 0$ and let Assumption \ref{assum:choice.detaP} hold.
  Let $T\in\Th$ be a mesh element. Then, for all $\btens{\tau}\in\tPoly{m}(T;\Symm)$, it holds
  \begin{multline}\label{eq:est.reconstruction}
    \norm[\btens{L}^2(T;\Real^{2\times 2})]{\btens{\tau}}
    \lesssim
    \norm[\btens{L}^2(T;\Real^{2\times 2})]{\Hproj{m-3}\btens{\tau}}
    +  \norm[\btens{L}^2(T;\Real^{2\times 2})]{\cHproj{m+2-\eta_T}\btens{\tau}}
    \\
    + \sum_{E\in\ET}\left(
    h_T^{\nicefrac12}\norm[L^2(E)]{\lproj{m-2}{E}(\btens{\tau}_{|E}\normal_E\cdot\normal_E)}
    + h_T^{\nicefrac32}\norm[L^2(E)]{\dE\btens{\tau}}
    \right)
    + h_T\sum_{V\in\VT}|\btens{\tau}(\bvec{x}_V)|.
  \end{multline}
\end{lemma}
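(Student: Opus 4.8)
\emph{Splitting off the $\DIV\VDIV$-part.} The plan is first to isolate the component of $\btens{\tau}$ on which $\DIV\VDIV$ is invertible. Using \eqref{eq:vPoly=Coly.oplys.cColy}, write $\btens{\tau}=\btens{\sigma}+\btens{\rho}$ with $\btens{\sigma}\in\Coly{m}(T)$ (hence $\DIV\VDIV\btens{\sigma}=0$) and $\btens{\rho}\in\cColy{m}(T)$, so $\DIV\VDIV\btens{\rho}=\DIV\VDIV\btens{\tau}$ and \eqref{eq:est.div.div.-1} gives $\norm[\btens{L}^2(T;\Real^{2\times 2})]{\btens{\rho}}\lesssim h_T^2\norm[L^2(T)]{\DIV\VDIV\btens{\tau}}$ (the low-degree cases where $\cColy{m}(T)=\{\btens 0\}$ being trivial). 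I would then bound the right-hand side by taking $q=\DIV\VDIV\btens{\tau}\in\Poly{m-2}(T)$ in the integration by parts formula \eqref{eq:ibp.SigmaT}: the volume term equals $\int_T\btens{\tau}:\HESS(\DIV\VDIV\btens{\tau})=\int_T\Hproj{m-4}\btens{\tau}:\HESS(\DIV\VDIV\btens{\tau})$ because $\HESS(\DIV\VDIV\btens{\tau})\in\Holy{m-4}(T)\subset\Holy{m-3}(T)$, which an inverse inequality turns into $\lesssim h_T^{-2}\norm[\btens{L}^2(T;\Real^{2\times 2})]{\Hproj{m-3}\btens{\tau}}\,\norm[L^2(T)]{\DIV\VDIV\btens{\tau}}$; the edge and vertex terms, after discrete trace and inverse inequalities, produce $h_T^{-2}\norm[L^2(T)]{\DIV\VDIV\btens{\tau}}$ times exactly the boundary and vertex contributions of \eqref{eq:est.reconstruction} — here one uses $\partial_{\normal_E}q_{|E}\in\Poly{m-3}(E)$ to insert $\lproj{m-3}{E}$, hence a fortiori $\lproj{m-2}{E}$, against $\btens{\tau}_{|E}\normal_E\cdot\normal_E$. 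Dividing by $\norm[L^2(T)]{\DIV\VDIV\btens{\tau}}$ yields the bound on $\btens{\rho}$. Since every term of the right-hand side of \eqref{eq:est.reconstruction} is also individually $\lesssim\norm[\btens{L}^2(T;\Real^{2\times 2})]{\btens{\rho}}$ ($L^2$-boundedness of projections, discrete trace inequalities for the edge terms, an inverse inequality for the vertex values), replacing $\btens{\tau}$ by $\btens{\sigma}=\btens{\tau}-\btens{\rho}$ alters both sides of \eqref{eq:est.reconstruction} only by a quantity already controlled by its right-hand side. It therefore suffices to prove \eqref{eq:est.reconstruction} when $\DIV\VDIV\btens{\tau}=0$.

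\emph{The $\DIV\VDIV$-free part.} For such $\btens{\tau}$, exactness of the polynomial complex (equivalently \eqref{eq:vPoly=Coly.oplys.cColy}) provides a vector potential $\bvec{v}\in\vPoly{m+1}(T;\Real^2)$, normalised $L^2$-orthogonal to $\RT{1}(T)$, with $\btens{\tau}=\SYM\CURL\bvec{v}$ and $\norm[\bvec{L}^2(T;\Real^2)]{\bvec{v}}\simeq h_T\norm[\btens{L}^2(T;\Real^{2\times 2})]{\btens{\tau}}$ by a scaling argument ($\SYM\CURL$ being an isomorphism from $\{\bvec{v}\perp\RT{1}(T)\}$ onto $\Coly{m}(T)$). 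A direct computation from \eqref{eq:2d.differential.operators} shows that, on each $E\in\ET$, the data entering \eqref{eq:est.reconstruction} are first-order boundary quantities of $\bvec{v}$: one has $\btens{\tau}_{|E}\normal_E\cdot\normal_E=-\partial_{\tangent_E}(\bvec{v}_{|E}\cdot\normal_E)$, the quantity $\btens{\tau}_{|E}\normal_E\cdot\tangent_E$ is a tangential first derivative of $\bvec{v}_{|E}$, and $\VDIV\btens{\tau}=\tfrac12\CURL(\DIV\bvec{v})$ so that $\dE\btens{\tau}$ is a tangential second derivative of $\bvec{v}_{|E}$; moreover $\btens{\tau}(\bvec{x}_V)$ pins down $\GRAD\bvec{v}(\bvec{x}_V)$ modulo a multiple of the identity. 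Through $\norm[\btens{L}^2(T;\Real^{2\times 2})]{\btens{\tau}}\simeq h_T^{-1}\norm[\bvec{L}^2(T;\Real^2)]{\bvec{v}}$, the estimate \eqref{eq:est.reconstruction} reduces to a serendipity-type estimate of the vector polynomial $\bvec{v}$ in terms of its edge traces, its gradient at the vertices, and a reduced interior projection — the local counterpart of the $\bvec{H}^1$-serendipity estimate underlying the reduced space $\suvec{V}_h^k$ and proved following \cite{Di-Pietro.Droniou:22}. The selected edges $\sET$ of Assumption \ref{assum:choice.detaP} enter precisely here: the product $b_T=\prod_{E\in\sET}d_E$ is a degree-$\eta_T$ polynomial vanishing (to second order) on $\sET$, and it is the device that recovers the top $\eta_T-2$ homogeneous layers of the Koszul component of $\btens{\tau}$ — equivalently of $\bvec{v}$ — from the edge data, which is why only $\cHproj{m+2-\eta_T}\btens{\tau}$, and not $\cHproj{m}\btens{\tau}$, survives on the right-hand side.

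\emph{Main obstacle.} The delicate point is exactly this last reduction. The boundary data of $\btens{\tau}$ determine $\bvec{v}_{|E}$ only up to its two top degrees and an additive constant per edge (the constant of integration in $\btens{\tau}_{|E}\normal_E\cdot\normal_E=-\partial_{\tangent_E}(\bvec v_{|E}\cdot\normal_E)$, together with the fact that only $\lproj{m-2}{E}$ of it is retained), so the passage from "$\btens{\tau}$-data" to "$\bvec{v}$-data" is not term-by-term: one must reconcile the per-edge constants through $\btens{\tau}(\bvec{x}_V)$ and continuity of $\bvec{v}$ along $\partial T$, using Poincar\'e--Wirtinger arguments on $\partial T$ and the one-dimensional estimate \eqref{eq:est.normL2.1d} (in the spirit of Proposition \ref{prop:est.boundary.fct}), and then match $\cHproj{m+2-\eta_T}\btens{\tau}$ against the analogous reduced interior projection of $\bvec{v}$ by means of the isomorphism $\VROT:\cHoly{m}(T)\to\vPoly{m-1}(T;\Real^2)$ of Proposition \ref{prop:continuity.inverses} and the non-alignment and $\theta$-separation of the edges in $\sET$. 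An equivalent, and arguably cleaner, packaging is to rescale $T$ to a reference configuration — by mesh regularity and Assumption \ref{assum:choice.detaP} these form a compact family and $\eta_T$ ranges over a finite set — observe that \eqref{eq:est.reconstruction} is scale-invariant, and invoke equivalence of norms on the finite-dimensional space $\tPoly{m}(T;\Symm)$; the statement then reduces to checking that the right-hand side of \eqref{eq:est.reconstruction} is a norm, i.e.\ that its vanishing forces $\btens{\tau}=\btens 0$, which follows by combining the conclusion $\DIV\VDIV\btens{\tau}=0$ of the first step (obtained from \eqref{eq:ibp.SigmaT}) with the vanishing-data argument on the potential $\bvec{v}$ above.
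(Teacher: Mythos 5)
Your first step (splitting off the $\cColy{m}(T)$ component, bounding it via \eqref{eq:est.div.div.-1} after estimating $h_T^2\norm[L^2(T)]{\DIV\VDIV\btens{\tau}}$ through \eqref{eq:ibp.SigmaT} with $q=\DIV\VDIV\btens{\tau}$, and checking that the right-hand side of \eqref{eq:est.reconstruction} absorbs this component) is sound and coincides in substance with the paper's treatment of the decomposition $\btens{\tau}=\SYM\CURL\bvec{v}+\btens{\upsilon}$ from \eqref{eq:vPoly=Coly.oplys.cColy}. The problem is that the remaining part — the only place where Assumption \ref{assum:choice.detaP} and the reduced projection $\cHproj{m+2-\eta_T}\btens{\tau}$ actually matter — is not proved: you explicitly flag it as the ``main obstacle'' and stop at a plan. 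The paper closes exactly this gap in three concrete moves: (a) it normalises $\bvec{v}$ by the \emph{boundary} conditions \eqref{eq:zero.mean} (not by $L^2(T)$-orthogonality to $\RT{1}(T)$, which would not feed the boundary Poincar\'e--Wirtinger step), and controls $\norm[\bvec{L}^2(\partial T;\Real^2)]{\bvec{v}}$ by translating the edge and vertex data of $\btens{\tau}$ into tangential derivatives of the components of $\bvec{v}$ and invoking \eqref{eq:est.normL2.1d}; (b) it uses the serendipity estimate of the companion work (the analogue of your $b_T=\prod_{E\in\sET}d_E$ device) to reduce $\norm[\bvec{L}^2(T;\Real^2)]{\bvec{v}}$ to $\norm[\bvec{L}^2(T;\Real^2)]{\vlproj{m+1-\eta_T}{T}\bvec{v}}$ plus the boundary norm; (c) it bounds $\vlproj{m+1-\eta_T}{T}\bvec{v}$ by duality against $\btens{\phi}\in\cHoly{m+2-\eta_T}(T)$ through \eqref{eq:ibp.VT}, which is where $\cHproj{m+2-\eta_T}\btens{\tau}$ appears, using the continuity \eqref{eq:est.vrot.-1} of $\VROT^{-1}$. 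You name these ingredients but do not carry out any of them, so the estimate of $\norm[\btens{L}^2(T;\Real^{2\times 2})]{\SYM\CURL\bvec{v}}$ — the heart of the lemma — is missing.

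The compactness/norm-equivalence shortcut you offer instead does not repair this. The rescaled elements do not form a finite set of reference configurations: they are a continuum of polygonal shapes, and the assertion that this family is compact and that the norm-equivalence constant varies continuously (hence is uniformly bounded in terms of the mesh regularity parameter and $\theta$ only) is precisely the quantitative content of \eqref{eq:est.reconstruction}, not a consequence of equivalence of norms on a fixed finite-dimensional space. Even granting compactness in some Hausdorff sense, you would still have to prove lower semicontinuity/continuity of the right-hand-side functional with respect to the shape (its very structure — edge projections $\lproj{m-2}{E}$, the selected set $\sET$, vertex evaluations — changes combinatorially with the element), which is nowhere addressed. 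The qualitative unisolvence check (``vanishing right-hand side forces $\btens{\tau}=\btens{0}$'') is fine as far as it goes, but it only yields a constant for each fixed element, with no uniformity in $h$. So as written the proposal establishes the easy reduction but leaves the essential, Assumption-\ref{assum:choice.detaP}-dependent estimate unproved.
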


\begin{remark}[Reduction by serendipity]
  Lemma \ref{lem:estimate.symmetric.poly} clearly shows which polynomial components $\utens{\Sigma}_T^{k-1}$ can be reduced by serendipity, namely the ones in $\cHoly{k-1}(T)$.
  As will become clear in what follows, in order to preserve the homological properties, a corresponding reduction of the components of $\uvec{V}_T^k$ in $\Poly{k-2}(T)$ is required; see Remark \ref{rem:dof.reduction} below.
\end{remark}

\begin{proof}[Proof of Lemma \ref{lem:estimate.symmetric.poly}]
  Let $\btens{\tau}\in\tPoly{m}(T;\Symm)$ and denote, for the sake of brevity, by $\mathcal{N}_T(\btens{\tau})$ the right-hand side of \eqref{eq:est.reconstruction}.
  
  We start by estimating $\DIV\VDIV\btens{\tau}$.
  Using the integration by parts formula \eqref{eq:ibp.SigmaT} with $q\in\Poly{m-2}(T)$, inserting $\Hproj{m-3}$ in front of $\btens{\tau}$ in the first term in the right-hand side (since $\HESS q\in\Holy{m-4}(T)\subset\Holy{m-3}(T)$) and $\lproj{m-2}{E}$ in front of $\btens{\tau}\normal_E\cdot\normal_E$ in the second term (since $\partial_{\normal_E}q\in\Poly{m-3}(E)\subset\Poly{m-2}(E)$), and using Cauchy--Schwarz along with discrete trace and inverse inequalities, we infer
  $
  \int_T\DIV\VDIV\btens{\tau}~q
  \lesssim h_T^{-2}\mathcal{N}_T(\btens{\tau})\norm[L^2(T)]{q}.
  $
  Taking $q = \DIV\VDIV\btens{\tau}$, simplifying, and multiplying both sides by $h_T^2$ yields
  \begin{equation}\label{eq:est.div.div.tau}
    h_T^2\norm[L^2(T)]{\DIV\VDIV\btens{\tau}}\lesssim\mathcal{N}_T(\btens{\tau}).
  \end{equation}

  By \eqref{eq:vPoly=Coly.oplys.cColy}, $\btens{\tau}$ can be decomposed as follows:
  \begin{equation}\label{eq:decomp.tau}
    \btens{\tau} = \SYM\CURL\bvec{v} + \btens{\upsilon},
  \end{equation}
  with $\bvec{v}\in\vPoly{m+1}(T;\Real^2)$ and $\btens{\upsilon}\in\cColy{m}(T)$.
  Since $\bvec{v}$ is defined up to a function in $\RT{1}(T)$, we can assume that
  \begin{equation}\label{eq:zero.mean}
    \text{
      $\int_{\partial T}\bvec{v} = \bvec{0}$\quad
      and\quad $\int_{\partial T}\partial_{\tangent,\partial T} v_{\tangent,\partial T} = 0$,
    }
  \end{equation}
  where we remind the reader that, as in Section \ref{sec:poincare:preliminary.results}, $\partial_{\tangent,\partial T}$ and $v_{\tangent,\partial T}$ are, respectively, the broken tangential derivative and tangential component of $\bvec{v}$ on $\partial T$.
  
  We next proceed to estimate the $L^2$-norms of the terms in the right-hand side of \eqref{eq:decomp.tau}.
To estimate $\norm[\btens{L}^2(T;\Real^{2\times 2})]{\btens{\upsilon}}$, we start with \eqref{eq:est.div.div.-1}, notice that $\DIV\VDIV\btens{\upsilon} = \DIV\VDIV\btens{\tau}$ (since $\DIV\VDIV\SYM\CURL = 0$), then invoke \eqref{eq:est.div.div.tau} to write
  \begin{equation}\label{eq:est.upsilon}
    \norm[\btens{L}^2(T;\Real^{2\times 2})]{\btens{\upsilon}}
    \lesssim h_T^2\norm[L^2(T)]{\DIV\VDIV\btens{\upsilon}}
    = h_T^2\norm[L^2(T)]{\DIV\VDIV\btens{\tau}}
    \lesssim\mathcal{N}_T(\btens{\tau}).
  \end{equation}

  To estimate $\norm[\btens{L}^2(T;\Real^{2\times 2})]{\SYM\CURL\bvec{v}}$, we start by using a discrete inverse inequality followed by \cite[Lemma 13]{Di-Pietro.Droniou:22} to write
  \begin{equation}\label{eq:est.norm.L2.sym.curl.v}
    \norm[\btens{L}^2(T;\Real^{2\times 2})]{\SYM\CURL\bvec{v}}
    \lesssim h_T^{-1} \norm[\bvec{L}^2(T;\Real^2)]{\bvec{v}}
    \lesssim h_T^{-1}\left(
    \norm[\bvec{L}^2(T;\Real^2)]{\vlproj{m+1-\eta_T}{T}\bvec{v}}
    + h_T^{\nicefrac12}\norm[\bvec{L}^2(\partial T;\Real^2)]{\bvec{v}}
    \right).
  \end{equation}
  We next proceed to estimate the terms in parentheses, starting with $\norm[\bvec{L}^2(\partial T;\Real^2)]{\bvec{v}}$.
  Since $\bvec{v}$  has zero average on $\partial T$, by a Poincaré--Wirtinger inequality we infer
  \begin{equation}\label{eq:est.v.pT:basic}
    \norm[\bvec{L}^2(\partial T;\Real^2)]{\bvec{v}}
    \lesssim h_T\norm[\bvec{L}^2(\partial T;\Real^2)]{\partial_{\tangent_{\partial T}}\bvec{v}}.
  \end{equation}
  Decomposing $\bvec{v}_{\partial T}$ into its normal and tangential components, and using triangle and H\"older inequalities along with the fact that $\normal_E$ and $\tangent_E$ are unit vectors, we get
  \begin{equation}\label{eq:est.ptE.v}
    \norm[\bvec{L}^2(\partial T;\Real^2)]{\partial_{\tangent_{\partial T}}\bvec{v}}
    \le\norm[L^2(\partial T)]{\partial_{\tangent_{\partial T}} v_{\normal,\partial T}}
    + \norm[L^2(\partial T)]{\partial_{\tangent_{\partial T}} v_{\tangent,\partial T}},
  \end{equation}
  where we remind the reader that $v_{\normal,\partial T}$ denotes the normal component of $\bvec{v}$ on $\partial T$.
  Since, for all $E\in\ET$, $\partial_{\tangent_E} v_{\normal,E} = (\SYM\CURL\bvec{v})\normal_E\cdot\normal_E = (\btens{\tau} - \btens{\upsilon})\normal_E\cdot\normal_E$ (cf., respectively, \cite[Eq. (3)]{Chen.Huang:20} and \eqref{eq:decomp.tau}), we can use a triangle inequality to write
  \begin{equation}\label{eq:est.ptE.vn}
    \begin{aligned}
      &\norm[L^2(\partial T)]{\partial_{\tangent_{\partial T}} v_{\normal,\partial T}}^2
      \\
      &\quad
      \lesssim
      \sum_{E\in\ET}\left(
      \norm[L^2(E)]{\btens{\tau}_{|E}\normal_E\cdot\normal_E}^2
      + \norm[L^2(E)]{\btens{\upsilon}_{|E}\normal_E\cdot\normal_E}^2
      \right)
      \\
      &\quad
      \lesssim
      \sum_{E\in\ET}\left(
      \norm[L^2(E)]{\lproj{m-2}{E}(\btens{\tau}_{|E}\normal_E\cdot\normal_E)}^2
      + h_E\sum_{V\in\VE}|\btens{\tau}(\bvec{x}_V)\normal_E\cdot\normal_E|^2
      + \norm[L^2(E)]{\btens{\upsilon}_{|E}\normal_E\cdot\normal_E}^2
      \right)
      \\
      &\quad
      \lesssim h_T^{-1}\mathcal{N}_T(\btens{\tau})^2,
    \end{aligned}
  \end{equation}
  where the second line follows from \eqref{eq:est.normL2.1d} applied to $\varphi = \btens{\tau}_{|E}\normal_E\cdot\normal_E$,
  while the conclusion follows using the definition of $\mathcal{N}_T(\btens{\tau})$ for the first two terms and a discrete trace inequality followed by \eqref{eq:est.upsilon} for the last term.
  To estimate $\norm[L^2(\partial T)]{\partial_{\tangent_{\partial T}} v_{\tangent,\partial T}}$, we proceed in a similar way as for the estimate of $\term_2$ in Proposition \ref{prop:est.boundary.fct} (using the fact that $\partial_{\tangent_{\partial T}} v_{\tangent,\partial T}$ has zero average on $T$) to infer
  \[
  \begin{aligned}
    \norm[L^2(\partial T)]{\partial_{\tangent_{\partial T}} v_{\tangent,\partial T}}
    &\lesssim
    h_T\norm[L^2(\partial T)]{\partial_{\tangent_{\partial T}}^2 v_{\tangent,\partial T}}
    + h_T^{\nicefrac12}\sum_{V\in\VT}|\SYM\CURL\bvec{v}(\bvec{x}_V)|
    \\
    &=
    h_T\left(
    \sum_{E\in\ET}\norm[L^2(E)]{\dE(\btens{\tau} - \btens{\upsilon})}^2
    \right)^{\nicefrac12}
    + h_T^{\nicefrac12}\sum_{V\in\VT}|\btens{\tau}(\bvec{x}_V) - \btens{\upsilon}(\bvec{x}_V)|
    \\
    &\lesssim
    h_T\left(
    \sum_{E\in\ET}\norm[L^2(E)]{\dE\btens{\tau}}^2
    \right)^{\nicefrac12}
    + h_T^{\nicefrac12}\sum_{V\in\VT}|\btens{\tau}(\bvec{x}_V)|
    + h_T^{-\nicefrac12}\norm[\btens{L}^2(T;\Real^{2\times 2})]{\btens{\upsilon}},
  \end{aligned}
  \]
  where, to pass to the second line, we have used \cite[Eq. (4)]{Chen.Huang:20} to write $\partial_{\tangent_E}^2 v_{\tangent,E} = \dE\SYM\CURL\bvec{v} = \dE(\btens{\tau} - \btens{\upsilon})$ for the first term and \eqref{eq:decomp.tau} for the second,
  while, to pass to the third line, we have used triangle inequalities followed by discrete inverse and trace inequalities along with $\card(\ET) = \card(\VT) \lesssim 1$ to treat the terms containing $\btens{\upsilon}$.
  Combining the definition of $\mathcal{N}_T(\btens{\tau})$ with \eqref{eq:est.upsilon}, we conclude that
  \begin{equation}\label{eq:est.ptE.vt}
  \norm[L^2(\partial T)]{\partial_{\tangent_{\partial T}} v_{\tangent,\partial T}}
  \lesssim h_T^{-\nicefrac12}\mathcal{N}_T(\btens{\tau}).
  \end{equation}
  Plugging \eqref{eq:est.ptE.vn} and \eqref{eq:est.ptE.vt} into \eqref{eq:est.ptE.v} and the resulting inequality into \eqref{eq:est.v.pT:basic}, we conclude that
  \begin{equation}\label{eq:est.v.pT}
    \norm[\bvec{L}^2(\partial T;\Real^2)]{\bvec{v}}
    \lesssim h_T^{\nicefrac12}\mathcal{N}_T(\btens{\tau}).
  \end{equation}

  It only remains to estimate $\norm[\bvec{L}^2(T;\Real^2)]{\vlproj{m+1-\eta_T}{T}\bvec{v}}$ in \eqref{eq:est.norm.L2.sym.curl.v}.
  To this end, we start using the integration by parts formula \eqref{eq:ibp.VT} to write, for all $\btens{\phi}\in\cHoly{m+2-\eta_T}(T)$,
  \[
  \begin{aligned}
    \int_T\bvec{v}\cdot\VROT\btens{\phi}
    &= -\int_T\SYM\CURL\bvec{v}:\btens{\phi}
    + \sum_{E\in\ET}\omega_{TE}\int_E\bvec{v}\cdot(\btens{\phi}\,\tangent_E)
    \\
    &= -\int_T\cHproj{m+2-\eta_T}\btens{\tau}:\btens{\phi}
    +\int_T\btens{\upsilon}:\btens{\phi}
    + \sum_{E\in\ET}\omega_{TE}\int_E\bvec{v}\cdot(\btens{\phi}\,\tangent_E),
  \end{aligned}
  \]
  where we have used \eqref{eq:decomp.tau} to pass to the second line and invoked its definition to insert $\cHproj{m+2-\eta_T}$ into the first term.
  We then apply Cauchy--Schwarz and discrete trace inequalites to get
  \[
  \begin{aligned}
    \left|
    \int_T\bvec{v}\cdot\VROT\btens{\phi}
    \right|
    &\lesssim\left(
    \norm[\btens{L}^2(T;\Real^{2\times 2})]{\cHproj{m+2-\eta_T}\btens{\tau}}
    + \norm[\btens{L}^2(T;\Real^{2\times 2})]{\btens{\upsilon}}
    + h_T^{-\nicefrac12}\norm[\bvec{L}^2(\partial T;\Real^2)]{\bvec{v}}
    \right)\norm[\btens{L}^2(T;\Real^{2\times 2})]{\btens{\phi}}
    \\
    &\lesssim h_T\mathcal{N}_T(\btens{\tau})\norm[\bvec{L}^2(T;\Real^2)]{\VROT\btens{\phi}},
  \end{aligned}
  \]
  where the conclusion follows using the definition of $\mathcal{N}_T(\btens{\tau})$ along with \eqref{eq:est.upsilon} and \eqref{eq:est.v.pT} for the first factor and \eqref{eq:est.vrot.-1} for the second.
  Taking the supremum over $\btens{\phi}\in\cHoly{m+2-\eta_T}(T)$ such that $\norm[\bvec{L}^2(T;\Real^2)]{\VROT\btens{\phi}} = 1$ finally yields
  \begin{equation*}
    \norm[\bvec{L}^2(T;\Real^2)]{\vlproj{m+1-\eta_T}{T}\bvec{v}}\lesssim h_T\mathcal{N}_T(\btens{\tau}).
  \end{equation*}
  Plugging this result and \eqref{eq:est.v.pT} into \eqref{eq:est.norm.L2.sym.curl.v} gives $\norm[\btens{L}^2(T;\Real^{2\times 2})]{\SYM\CURL\bvec{v}}\lesssim\mathcal{N}_T(\btens{\tau})$ which, combined with \eqref{eq:est.upsilon}, gives \eqref{eq:est.reconstruction} after taking the $L^2$-norm of \eqref{eq:decomp.tau} and using a triangle inequality in the right-hand side.
\end{proof}

\subsection{Serendipity problem}

Recalling Assumption \ref{assum:choice.detaP}, we let
\begin{equation}\label{eq:ell.T}
  \ell_T\coloneq k - \eta_T\le k - 2;  
\end{equation}
Given a linear form $\mathcal{L}_T:\tPoly{k-1}(T;\Symm)\times\cHoly{\ell_T+1}(T)\to\Real$, we consider the following problem:
Find $(\btens{\sigma},\btens{\lambda})\in\tPoly{k-1}(T;\Symm)\times\cHoly{\ell_T+1}(T)$ such that
\begin{equation}\label{eq:serendipity.problem}
  \mathcal{A}_T((\btens{\sigma},\btens{\lambda}),(\btens{\tau},\btens{\mu}))
  = \mathcal{L}_T(\btens{\tau},\btens{\mu})
  \qquad\forall(\btens{\tau},\btens{\mu})\in\tPoly{k-1}(T;\Symm)\times\cHoly{\ell_T+1}(T),
\end{equation}
where the bilinear form $\mathcal{A}_T:\big[\tPoly{k-1}(T;\Symm)\times\cHoly{\ell_T+1}(T)\big]^2\to\Real$ is such that
\begin{equation}\label{eq:AT}
  \begin{aligned}
    \mathcal{A}_T((\btens{\upsilon},\btens{\nu}),(\btens{\tau},\btens{\mu}))
    &\coloneq
    h_T^4\int_T\DIV\VDIV\btens{\upsilon}~\DIV\VDIV\btens{\tau}
    \\
    &\quad
    + h_T\sum_{E\in\ET}\int_E\lproj{k-3}{E}(\btens{\upsilon}\normal_E\cdot\normal_E)~\lproj{k-3}{E}(\btens{\tau}\normal_E\cdot\normal_E)
    + h_T^3\sum_{E\in\ET}\int_E
    \dE\btens{\upsilon}~\dE\btens{\tau}
    \\
    &\quad
    + h_T^2\sum_{V\in\VT}\btens{\upsilon}(\bvec{x}_V):\btens{\tau}(\bvec{x}_V)
    + \int_T\btens{\upsilon}:\btens{\mu}
    - \int_T\btens{\tau}:\btens{\nu}.
  \end{aligned}
\end{equation}

\begin{lemma}[Inf-sup condition and well-posedness of the serendipity problem]
  The following inf-sup condition holds: For all $(\btens{\upsilon},\btens{\nu})\in\tPoly{k-1}(T;\Symm)\times\cHoly{\ell_T+1}(T)$,
  \begin{equation}\label{eq:inf-sup}
    \norm[T]{(\btens{\upsilon},\btens{\nu})}
    \lesssim\sup_{(\btens{\tau},\btens{\mu})\in\tPoly{k-1}(T;\Symm)\times\cHoly{\ell_T+1}(T)\setminus\{(\btens{0},\btens{0})\}}
    \frac{\mathcal{A}_T((\btens{\upsilon},\btens{\nu}),(\btens{\tau},\btens{\mu}))}{\norm[T]{(\btens{\tau},\btens{\mu})}}
    \eqcolon\$,
  \end{equation}
  where $\norm[T]{(\btens{\upsilon},\btens{\nu})}\coloneq\norm[\btens{L}^2(T;\Real^{2\times 2})]{\btens{\upsilon}} + \norm[\btens{L}^2(T;\Real^{2\times 2})]{\btens{\nu}}$.
  Hence, denoting by $\norm[T]{\mathcal{L}_T}$ the norm of $\mathcal{L}_T$ dual to $\norm[T]{{\cdot}}$, problem \eqref{eq:serendipity.problem} admits a unique solution that satisfies
  \begin{equation}\label{eq:a-priori}
    \norm[T]{(\btens{\sigma},\btens{\lambda})}
    \lesssim
    \norm[T]{\mathcal{L}_T}.
  \end{equation}
\end{lemma}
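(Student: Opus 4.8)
The plan is to read \eqref{eq:serendipity.problem} as a (square) saddle-point problem. Writing, for $\btens{\upsilon},\btens{\tau}\in\tPoly{k-1}(T;\Symm)$, $a_T(\btens{\upsilon},\btens{\tau})$ for the sum of the first four terms of $\mathcal{A}_T$ in \eqref{eq:AT} — a symmetric positive semi-definite form — and $b_T(\btens{\tau},\btens{\mu})\coloneq\int_T\btens{\tau}:\btens{\mu}$, we have $\mathcal{A}_T((\btens{\upsilon},\btens{\nu}),(\btens{\tau},\btens{\mu})) = a_T(\btens{\upsilon},\btens{\tau}) + b_T(\btens{\upsilon},\btens{\mu}) - b_T(\btens{\tau},\btens{\nu})$, where $\btens{\mu},\btens{\nu}$ range in $\cHoly{\ell_T+1}(T)\subseteq\tPoly{k-1}(T;\Symm)$ (the inclusion holds because $\ell_T+1\le k-1$ by \eqref{eq:ell.T} since $\eta_T\ge2$). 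Throughout, $\norm{\cdot}$ denotes $L^2$-norms over $T$. The analytical cornerstone is the following refinement of Lemma \ref{lem:estimate.symmetric.poly}, specialised to $m=k-1$ (so that $m-3=k-4$, $m+2-\eta_T=\ell_T+1$ and $m-2=k-3$), in which $\norm{\Hproj{k-4}\btens{\tau}}$ is traded for $h_T^2\norm{\DIV\VDIV\btens{\tau}}$:
\begin{equation*}
  \norm{\btens{\tau}}\lesssim a_T(\btens{\tau},\btens{\tau})^{\nicefrac12} + \norm{\cHproj{\ell_T+1}\btens{\tau}}
  \qquad\forall\btens{\tau}\in\tPoly{k-1}(T;\Symm).
\end{equation*}

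To prove this refinement, let $p$ be the unique element of $\Poly{k-2}(T)$ that is $L^2$-orthogonal to $\Poly{1}(T)$ and satisfies $\HESS p=\Hproj{k-4}\btens{\tau}$; by \eqref{eq:est.hess.-1}, $\norm{p}\lesssim h_T^2\norm{\Hproj{k-4}\btens{\tau}}$ (for $k=3$ this step is vacuous since $\Holy{-1}(T)=\{\btens{0}\}$ and the term disappears). As $\Hproj{k-4}$ is the $L^2$-projector onto $\Holy{k-4}(T)\ni\HESS p$, we have $\norm{\Hproj{k-4}\btens{\tau}}^2=\int_T\btens{\tau}:\HESS p$, which I expand via the integration by parts formula \eqref{eq:ibp.SigmaT}; in the boundary integrals I insert $\lproj{k-3}{E}$ in front of $\btens{\tau}\normal_E\cdot\normal_E$, which is licit precisely because $\partial_{\normal_E}p\in\Poly{k-3}(E)$ — this degree match, forced by the choice $m=k-1$, is exactly why the $\lproj{k-3}{E}$-filtered edge terms of $a_T$ lose no information. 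Bounding each factor by Cauchy--Schwarz together with discrete trace and inverse inequalities and $\norm{p}\lesssim h_T^2\norm{\Hproj{k-4}\btens{\tau}}$, then dividing out one power of $\norm{\Hproj{k-4}\btens{\tau}}$, gives $\norm{\Hproj{k-4}\btens{\tau}}\lesssim a_T(\btens{\tau},\btens{\tau})^{\nicefrac12}$. Substituting this into Lemma \ref{lem:estimate.symmetric.poly} and using $\card(\ET)=\card(\VT)\lesssim1$ yields the displayed estimate.

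For the inf-sup condition \eqref{eq:inf-sup}, fix $(\btens{\upsilon},\btens{\nu})$ and test with $\btens{\tau}\coloneq\btens{\upsilon}-\varepsilon\btens{\nu}\in\tPoly{k-1}(T;\Symm)$ and $\btens{\mu}\coloneq\cHproj{\ell_T+1}\btens{\upsilon}+\btens{\nu}\in\cHoly{\ell_T+1}(T)$, for a parameter $\varepsilon>0$ to be fixed. Using $\int_T\btens{\upsilon}:\cHproj{\ell_T+1}\btens{\upsilon}=\norm{\cHproj{\ell_T+1}\btens{\upsilon}}^2$ and observing that the two $\int_T\btens{\upsilon}:\btens{\nu}$ contributions cancel, a direct computation gives
\begin{equation*}
  \mathcal{A}_T((\btens{\upsilon},\btens{\nu}),(\btens{\tau},\btens{\mu}))
  = a_T(\btens{\upsilon},\btens{\upsilon}) - \varepsilon\,a_T(\btens{\upsilon},\btens{\nu}) + \norm{\cHproj{\ell_T+1}\btens{\upsilon}}^2 + \varepsilon\norm{\btens{\nu}}^2.
\end{equation*}
Since $a_T(\btens{\eta},\btens{\eta})\lesssim\norm{\btens{\eta}}^2$ for every $\btens{\eta}\in\tPoly{k-1}(T;\Symm)$ — each of the four terms of $a_T$ being controlled by $\norm{\btens{\eta}}^2$ through discrete trace and inverse inequalities — a Young inequality absorbs $\varepsilon\,a_T(\btens{\upsilon},\btens{\nu})$ into $\tfrac12 a_T(\btens{\upsilon},\btens{\upsilon})$ up to a term $C\varepsilon^2\norm{\btens{\nu}}^2$; fixing $\varepsilon$ small enough (in terms of the hidden constants only) leaves $\mathcal{A}_T((\btens{\upsilon},\btens{\nu}),(\btens{\tau},\btens{\mu}))\gtrsim a_T(\btens{\upsilon},\btens{\upsilon})+\norm{\cHproj{\ell_T+1}\btens{\upsilon}}^2+\norm{\btens{\nu}}^2\gtrsim\norm[T]{(\btens{\upsilon},\btens{\nu})}^2$, the last bound being the refinement above. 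Since also $\norm[T]{(\btens{\tau},\btens{\mu})}\lesssim\norm[T]{(\btens{\upsilon},\btens{\nu})}$ (using $\norm{\cHproj{\ell_T+1}\btens{\upsilon}}\le\norm{\btens{\upsilon}}$), dividing yields \eqref{eq:inf-sup}.

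Finally, $\mathcal{A}_T$ is a bilinear form on the finite-dimensional space $\tPoly{k-1}(T;\Symm)\times\cHoly{\ell_T+1}(T)$, used both as trial and as test space; \eqref{eq:inf-sup} shows the induced linear map is injective, hence bijective, so \eqref{eq:serendipity.problem} has a unique solution $(\btens{\sigma},\btens{\lambda})$. Applying \eqref{eq:inf-sup} to $(\btens{\sigma},\btens{\lambda})$ and rewriting the numerator through \eqref{eq:serendipity.problem} as $\mathcal{L}_T(\btens{\tau},\btens{\mu})$ gives $\norm[T]{(\btens{\sigma},\btens{\lambda})}\lesssim\norm[T]{\mathcal{L}_T}$, i.e. \eqref{eq:a-priori}. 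I expect the main obstacle to be the refinement of Lemma \ref{lem:estimate.symmetric.poly}: one must verify that the $h_T^4$-weighted $\DIV\VDIV$ term and the $\lproj{k-3}{E}$-filtered edge terms of $a_T$ genuinely recover $\norm{\Hproj{k-4}\btens{\tau}}$ with no loss, which hinges on the precise degree count $\partial_{\normal_E}p\in\Poly{k-3}(E)$ and on the scaling in \eqref{eq:est.hess.-1}.
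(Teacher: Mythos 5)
Your proof is correct and essentially reproduces the paper's argument: the decisive step --- bounding $\norm[\btens{L}^2(T;\Real^{2\times 2})]{\Hproj{k-4}\btens{\upsilon}}$ by pairing $\btens{\upsilon}$ with a Hessian potential $q\in\Poly{k-2\perp 1}(T)$ through the integration by parts formula \eqref{eq:ibp.SigmaT}, scaled via \eqref{eq:est.hess.-1}, and then concluding with Lemma \ref{lem:estimate.symmetric.poly} at $m=k-1$ --- is exactly the paper's, and your remaining ingredients ($a_T(\btens{\eta},\btens{\eta})\lesssim\norm[\btens{L}^2(T;\Real^{2\times 2})]{\btens{\eta}}^2$ by trace and inverse inequalities, $L^2$-boundedness of $\cHproj{\ell_T+1}$, and the embedding $\cHoly{\ell_T+1}(T)\subset\tPoly{k-1}(T;\Symm)$ ensured by \eqref{eq:ell.T}) are the same ones the paper relies on. The only, harmless, difference is the assembly: you reach \eqref{eq:inf-sup} with the single test pair $(\btens{\upsilon}-\varepsilon\btens{\nu},\,\cHproj{\ell_T+1}\btens{\upsilon}+\btens{\nu})$ and a Young inequality, whereas the paper sums the bounds obtained from the three separate choices $(\btens{\upsilon},\btens{\nu})$, $(\btens{0},\cHproj{\ell_T+1}\btens{\upsilon})$, and $(-\btens{\nu},\btens{0})$.
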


\begin{proof}
  The existence and uniqueness of a solution to \eqref{eq:serendipity.problem} as well as the a priori estimate \eqref{eq:a-priori} classically follow from \eqref{eq:inf-sup}.
  Let us establish the latter condition for a given $(\btens{\upsilon},\btens{\nu})\in\tPoly{k-1}(T;\Symm)\times\cHoly{\ell_T+1}(T)$.
  Taking $(\btens{\tau},\btens{\mu}) = (\btens{\upsilon},\btens{\nu})$ in the expression \eqref{eq:AT} of $\mathcal{A}_T$, we obtain, after using the uniform bound on the number of edges and vertices of $T$ that holds by mesh regularity,
  \begin{multline}\label{eq:inf-sup:1}
  h_T^4\norm[L^2(T)]{\DIV\VDIV\btens{\upsilon}}^2
  + \sum_{E\in\ET}\left(
  h_T\norm[L^2(E)]{\lproj{k-3}{E}(\btens{\upsilon}\normal_E\cdot\normal_E)}^2
  + h_T^3\norm[L^2(E)]{\dE\btens{\upsilon}}^2
  \right)
  \\
  + h_T^2\sum_{V\in\VT}|\btens{\upsilon}(\bvec{x}_V)|^2
  = \mathcal{A}_T((\btens{\upsilon},\btens{\nu}),(\btens{\upsilon},\btens{\nu}))
  \le\$\norm[T]{(\btens{\upsilon},\btens{\nu})}.
  \end{multline}
  
  We next observe that, for any $q\in\Poly{k-2\perp 1}(T)$, writing the integration by parts formula \eqref{eq:ibp.SigmaT} and inserting the appropriate $L^2$-orthogonal projectors according to their definition, it holds
  \[
  \begin{aligned}
    \int_T\Hproj{k-4}\btens{\upsilon}:\HESS q
    &=
    \int_T\DIV\VDIV\btens{\upsilon}~q
    + \sum_{E\in\ET}\omega_{TE}\left(
    \int_E\lproj{k-3}{T}(\btens{\upsilon}\normal_E\cdot\normal_E)~\partial_{\normal_E}q
    - \int_E\dE\btens{\upsilon}~q
    \right)
    \\
    &\quad
    + \sum_{E\in\ET}\omega_{TE}\sum_{V\in\VE}\omega_{EV}(\btens{\upsilon}(\bvec{x}_V)\normal_E\cdot\tangent_E)~q(\bvec{x}_V).
  \end{aligned}
  \]
  By Proposition \ref{prop:continuity.inverses}, we can select $q$ such that $\HESS q = \Hproj{k-4}\btens{\upsilon}$ and $\norm[L^2(T)]{q}\lesssim h_T^2\norm[\btens{L}^2(T;\Real^{2\times 2})]{\HESS q}$.
  Applying Cauchy--Schwarz and discrete trace and inverse inequalities to estimate the right hand side of the resulting expression, simplifying, and raising to the square, we obtain
  \begin{multline}\label{eq:inf-sup:2}
    \norm[\btens{L}^2(T;\Real^{2\times 2})]{\Hproj{k-4}\btens{\upsilon}}^2
    \lesssim
    h_T^4\norm[L^2(T)]{\DIV\VDIV\btens{\upsilon}}^2
    + \sum_{E\in\ET}\left(
    h_T\norm[L^2(E)]{\lproj{k-3}{E}(\btens{\upsilon}\normal_E\cdot\normal_E)}^2
    + h_T^3\norm[L^2(E)]{\dE\btens{\upsilon}}^2
    \right)
    \\
    + h_T^2\sum_{V\in\VT}|\btens{\upsilon}(\bvec{x}_V)|^2
    \lesssim\$\norm[T]{(\btens{\upsilon},\btens{\nu})},    
  \end{multline}
  where the conclusion follows from \eqref{eq:inf-sup:1}.

  Finally, writing the definition \eqref{eq:AT} of $\mathcal{A}_T$ with $(\btens{\tau},\btens{\lambda}) = (\btens{0},\cHproj{\ell_T+1}\btens{\upsilon})$, we get
  \begin{equation}\label{eq:inf-sup:3}
    \norm[\btens{L}^2(T;\Real^{2\times 2})]{\cHproj{\ell_T+1}\btens{\upsilon}}^2
    = \mathcal{A}_T((\btens{\upsilon},\btens{\nu}),(\btens{0},\cHproj{\ell_T+1}\btens{\upsilon}))
    \le\$\norm[T]{(\btens{0},\cHproj{\ell_T+1}\btens{\upsilon})}
    \le\$\norm[T]{(\btens{\upsilon},\btens{\nu})},
  \end{equation}
  where the conclusion follows from the uniform $L^2$-boundedness of $\cHproj{\ell_T+1}$.
  Summing \eqref{eq:inf-sup:1}, \eqref{eq:inf-sup:2}, and \eqref{eq:inf-sup:3} and recalling \eqref{eq:est.reconstruction}, we infer
  \begin{equation}\label{eq:inf-sup:norm.upsilon}
    \norm[\btens{L}^2(T;\Real^{2\times 2})]{\btens{\upsilon}}^2
    \lesssim\$\norm[T]{(\btens{\upsilon},\btens{\nu})}.
  \end{equation}

  To estimate the $L^2$-norm of $\btens{\nu}$, we take $(\btens{\tau},\btens{\mu}) = (-\btens{\nu},\btens{0})$ in the expression \eqref{eq:AT} of $\mathcal{A}_T$ (this is possible since $\btens{\nu}\in\cHoly{\ell_T+1}(T)\subset\tPoly{k-1}(T;\Symm)$ owing to \eqref{eq:ell.T}) and, after using Cauchy--Schwarz, discrete trace, and inverse inequalities, simplifying, and raising to the square, we obtain
  \begin{equation}\label{eq:inf-sup:norm.nu}
    \norm[\btens{L}^2(T;\Real^{2\times 2})]{\btens{\nu}}^2
    \lesssim\$ + \$\norm[T]{(\btens{\upsilon},\btens{\nu})}.
  \end{equation}
  Summing \eqref{eq:inf-sup:norm.upsilon} and \eqref{eq:inf-sup:norm.nu}, using Young's inequality for the rightmost term in \eqref{eq:inf-sup:norm.nu}, and taking the square root of the resulting expression gives \eqref{eq:inf-sup}.
\end{proof}

\subsection{Serendipity spaces}

Recalling the definition \eqref{eq:ell.T} of $\ell_T$, the local serendipity spaces are:

\begin{align*}
  \suvec{V}_T^k&\coloneq\Big\{
  \begin{aligned}[t]
    &\suvec{v}_T=
    \big(
    \svec{v}_T,
    (\svec{v}_E)_{E\in\Eh},
    (\svec{v}_V, \btens{G}_{\bvec{v},V})_{V\in\Vh}
    \big)\st
    \\
    &\qquad\text{
      $\svec{v}_T\in\vPoly{\ell_T}(T;\Real^2)$,
    }
    \\
    &\qquad\text{
      $\svec{v}_E\in\vPoly{k-4}(E;\Real^2)$ for all $E\in\ET$,
    }
    \\
    &\qquad\text{      
      $\svec{v}_V\in\Real^2$
      and $\btens{G}_{\svec{v},V}\in\Real^{2\times 2}$
      for all $V\in\VT$
    }
    \Big\},
  \end{aligned}
  \\
  \sutens{\Sigma}_T^{k-1}&\coloneq\Big\{
  \begin{aligned}[t]
    &\sutens{\tau}_T
    =\big(
    (\stens{\tau}_{\cvec{H},T}, \stens{\tau}_{\cvec{H},T}^\compl,
    (\ser{\tau}_E,D_{\stens{\tau},E})_{E\in\Eh},
    (\stens{\tau}_V)_{V\in\Vh}
    \big)\st
    \\
    &\qquad\text{
      $\stens{\tau}_{\cvec{H},T}\in\Holy{k-4}(T)$ and $\stens{\tau}_{\cvec{H},T}^\compl\in\cHoly{\ell_T+1}(T)$,
    }
    \\
    &\qquad\text{
      $\ser{\tau}_E\in\Poly{k-3}(E)$
      and $D_{\stens{\tau},E}\in\Poly{k-2}(E)$ for all $E\in\ET$,
    }
    \\
    &\qquad\text{
      $\stens{\tau}_V\in\Symm$ for all $V\in\VT$
    }
    \Big\}.
  \end{aligned}
\end{align*}
Global spaces on $\Mh$ are obtained enforcing the single-valuedness of polynomial components located at internal edges and nodes.

\begin{remark}[Serendipity DOFs reduction]\label{rem:dof.reduction}
  Comparing the above expressions with those of the corresponding full spaces (i.e., the restrictions of \eqref{eq:VT} and \eqref{eq:SigmaT} to $T$) shows that the serendipity DOFs reduction acts on the components $\svec{v}_T$ and $\stens{\tau}_{\cvec{H},T}^\compl$, whose polynomial degrees are reduced from $(k-2,k-1)$ to $(\ell_T,\ell_T+1)$.
  Recalling \eqref{eq:ell.T}, the choice $\eta_T = 2$ therefore corresponds to no serendipity.
\end{remark}

In what follows, the component norms defined in Section \ref{sec:l2-product.norms} are applied to the elements of the serendipity spaces $\suvec{V}_h^k$ and $\sutens{\Sigma}_h^{k-1}$ after observing that the latter inject in the full spaces $\uvec{V}_h^k$ and $\utens{\Sigma}_h^{k-1}$ (notice that, by \eqref{eq:ell.T}, $\vPoly{\ell_T}(T;\Real^2)\subset\vPoly{k-2}(T;\Real^2)$ and $\cHoly{\ell_T+1}(T)\subset\cHoly{k-1}(T)$ for all $T\in\Th$).

\subsection{Serendipity operators}

The serendipity operators $\SV:\suvec{V}_T^k\to\tPoly{k-1}(T;\Symm)$ and $\SSigma:\sutens{\Sigma}_T^{k-1}\to\tPoly{k-1}(T;\Symm)$ are such that, for all $(\suvec{v}_T,\sutens{\tau}_T)\in\suvec{V}_T^k\times\sutens{\Sigma}_T^{k-1}$, $\SV\suvec{v}_T$ and $\SSigma\sutens{\tau}_T$ are the first components of the solutions of problem \eqref{eq:serendipity.problem} with right-hand side linear form $\mathcal{L}_T$ respectively equal to
\begin{equation}\label{eq:LT.V}
  \begin{aligned}
    \mathcal{L}_{\bvec{V},T}(\suvec{v}_T;\btens{\tau},\btens{\mu})
    &= h_T\sum_{E\in\ET}\int_E\lproj{k-3}{E}(\partial_{\tangent_E}\svec{v}_{\ET}\cdot\normal_E)~\lproj{k-3}{E}(\btens{\tau}\normal_E\cdot\normal_E)
    \\
    &\quad
    + h_T^3\sum_{E\in\ET}\int_E(\partial_{\tangent_E}^2\svec{v}_{\ET|E}\cdot\tangent_E)~\dE\btens{\tau}
    \\
    &\quad
    + h_T^2\sum_{V\in\VT}\mathbb{C}\btens{G}_{\svec{v},V}:\btens{\tau}(\bvec{x}_V)
    -\int_T\svec{v}_T\cdot\VROT\btens{\mu}
    + \sum_{E\in\ET}\omega_{TE}\int_E\svec{v}_{\ET}\cdot(\btens{\mu}\tangent_E)
  \end{aligned}
\end{equation}
and
\begin{equation}\label{eq:LT.Sigma}
  \begin{aligned}
    \mathcal{L}_{\btens{\Sigma},T}(\sutens{\upsilon}_T;\btens{\tau},\btens{\mu})
    &= \begin{aligned}[t]
      h_T^4\bigg[&
        \int_T\stens{\upsilon}_{\cvec{H},T}:\HESS\DIV\VDIV\btens{\tau}
        \\
        &
        -\sum_{E\in\ET}\omega_{TE}\left(
        \int_E\ser{\upsilon}_E~\partial_{\normal_E}(\DIV\VDIV\btens{\tau})
        -\int_E D_{\stens{\upsilon},E}~\DIV\VDIV\btens{\tau}
        \right)
        \\
        &
        -\sum_{E\in\ET}\omega_{TE}\sum_{V\in\VE}\omega_{EV}(\stens{\upsilon}_V\normal_E\cdot\tangent_E)
        \DIV\VDIV\btens{\tau}(\bvec{x}_V)
      \bigg]
    \end{aligned}
    \\
    &\quad
    + h_T\sum_{E\in\ET}\int_E\ser{\upsilon}_E~\lproj{k-3}{E}(\btens{\tau}\normal_E\cdot\normal_E)
    + h_T^3\sum_{E\in\ET}\int_E D_{\stens{\upsilon},E}~\dE\btens{\tau}
    \\
    &\quad
    + h_T^2\sum_{V\in\VT}\stens{\upsilon}_V:\btens{\tau}(\bvec{x}_V)
    + \int_T\stens{\upsilon}_{\cvec{H},T}^\compl:\btens{\mu}.
  \end{aligned}
\end{equation}

\begin{remark}[Alternative expression for $\mathcal{L}_{\btens{\Sigma},T}(\sutens{\upsilon}_T;\cdot)$]
  Using the injection $\sutens{\Sigma}_T^{k-1}\hookrightarrow\utens{\Sigma}_T^{k-1}$ to apply the operator $\DD{k-2}$ defined by \eqref{eq:DDT} to elements of $\suvec{\Sigma}_T^{k-1}$, we have the following equivalent reformulation of $\mathcal{L}_{\btens{\Sigma},T}(\sutens{\upsilon}_T;\cdot)$:
  For all $(\btens{\tau},\btens{\mu})\in\tPoly{k-1}(T;\Symm)\times\cHoly{\ell_T+1}(T)$,
  \begin{equation}\label{eq:LT.Sigma:bis}
    \begin{aligned}
      \mathcal{L}_{\btens{\Sigma},T}(\sutens{\upsilon}_T;\btens{\tau},\btens{\mu})
      &=
      h_T^4\int_T\DD{k-2}\sutens{\upsilon}_T~\DIV\VDIV\btens{\tau}
      \\
      &\quad
      + h_T\sum_{E\in\ET}\int_E\ser{\upsilon}_E~\lproj{k-3}{E}(\btens{\tau}\normal_E\cdot\normal_E)
      + h_T^3\sum_{E\in\ET}\int_E D_{\stens{\upsilon},E}~\dE\btens{\tau}
      \\
      &\quad
      + h_T^2\sum_{V\in\VT}\stens{\upsilon}_V:\btens{\tau}(\bvec{x}_V)
      + \int_T\stens{\upsilon}_{\cvec{H},T}^\compl:\btens{\mu}.
    \end{aligned}
  \end{equation}
\end{remark}

\subsection{Reduction and extension operators}

The restriction operators $\RV{T}:\uvec{V}_T^k\to\suvec{V}_T^k$ and $\RS{T}:\utens{\Sigma}_T^{k-1}\to\sutens{\Sigma}_T^{k-1}$ are defined taking $L^2$-orthogonal projections on the reduced component spaces:
For all $(\uvec{v}_T,\utens{\tau}_T)\in\uvec{V}_T^k\times\utens{\Sigma}_T^{k-1}$,
\begin{align}\label{eq:RV}
  \RV{T}\uvec{v}_T
  &\coloneq
  \Big(
  \vlproj{\ell_T}{T}\bvec{v}_T, (\bvec{v}_E)_{E\in\ET}, (\bvec{v}_V,\btens{G}_{\bvec{v},V})_{V\in\VT}
  \Big),
  \\ \label{eq:RS}
  \RS{T}\utens{\tau}_T
  &\coloneq
  \Big(
  \btens{\tau}_{\cvec{H},T}, \cHproj{\ell_T+1}\btens{\tau}_{\cvec{H},T}^\compl,
  (\tau_E, D_{\btens{\tau},E})_{E\in\ET},
  (\btens{\tau}_V)_{V\in\VT}
  \Big).
\end{align}
According to \cite[Eq. (2.4)]{Di-Pietro.Droniou:22}, the interpolators on the spaces $\suvec{V}_T^k$ and $\sutens{\Sigma}_T^{k-1}$ are respectively given by
\begin{equation}\label{eq:sinterpolators}
  \text{
    $\sIV{k}\coloneq\RV{T}\IV{k}$\quad and\quad
    $\sISigma{k-1}\coloneq\RS{T}\ISigma{k-1}$.
  }
\end{equation}

The extension operators $\EV{T}:\suvec{V}_T^k\to\uvec{V}_T^k$ and $\ES{T}:\sutens{\Sigma}_T^{k-1}\to\utens{\Sigma}_T^{k-1}$ are such that, for all $(\suvec{v}_T,\sutens{\tau}_T)\in\suvec{V}_T^k\times\sutens{\Sigma}_T^{k-1}$,
\begin{align}\label{eq:EV}
  \EV{T}\suvec{v}_T
  &\coloneq
  \Big(
  \EPT\suvec{v}_T, (\svec{v}_E)_{E\in\ET}, (\svec{v}_V,\btens{G}_{\svec{v},V})_{V\in\VT}
  \Big),
  \\ \label{eq:ES}
  \ES{T}\sutens{\tau}_T
  &\coloneq
  \Big(
  \stens{\tau}_{\cvec{H},T}, \cHproj{k-1}\SSigma\sutens{\tau}_T,
  (\ser{\tau}_E, D_{\stens{\tau},E})_{E\in\ET},
  (\stens{\tau}_V)_{V\in\VT}
  \Big),
\end{align}
where $\EPT:\sutens{\Sigma}_T^{k-1}\to\vPoly{k-2}(T)$ is such that, for all $\suvec{v}_T\in\suvec{V}_T^k$,
\begin{equation}\label{eq:EPT}
  \int_T\EPT\suvec{v}_T\cdot\VROT\btens{\tau}
  = -\int_T\SV\suvec{v}_T:\btens{\tau}
  + \sum_{E\in\ET}\omega_{TE}\int_E\svec{v}_{\ET}\cdot(\btens{\tau}\tangent_E)
  \qquad
  \forall\btens{\tau}\in\cHoly{k-1}(T).
\end{equation}
The fact that $\EPT\suvec{v}_T$ is uniquely defined by the above equation follows from the fact that $\VROT:\cHoly{k-1}(T)\to\vPoly{k-2}(T)$ is an isomorphism (see Proposition \ref{prop:continuity.inverses}).
Owing to the definition of the serendipity operator $\SV$ and bound \eqref{eq:a-priori}, it can be checked that, for all $T\in\Th$,
\begin{equation}\label{eq:EVT:continuity}
  \tnorm[\bvec{V},T]{\EV{T}\suvec{v}_T}
  \lesssim\tnorm[\svec{V},T]{\suvec{v}_T}.
\end{equation}

\subsection{Preliminary results}

\begin{lemma}[Polynomial consistency of the serendipity and extension operators]
  It holds:
  \begin{alignat}{4}\label{eq:SV:polynomial.consistency}
    \SV\sIV{k}\bvec{v} &= \SYM\CURL\bvec{v}
    &\qquad&
    \forall\bvec{v}\in\vPoly{k}(T;\Real^2),
    \\ \label{eq:EV:polynomial.consistency}
    \EV{T}\sIV{k}\bvec{v} &= \IV{k}\bvec{v}
    &\qquad&
    \forall\bvec{v}\in\vPoly{k}(T;\Real^2),
    \\ \label{eq:SS:polynomial.consistency}
    \SSigma\sISigma{k-1}\btens{\upsilon} &= \btens{\upsilon}
    &\qquad&
    \forall\btens{\upsilon}\in\tPoly{k-1}(T;\Symm),
    \\ \label{eq:ES:polynomial.consistency}
    \ES{T}\sISigma{k-1}\btens{\upsilon} &= \ISigma{k-1}\btens{\upsilon}
    &\qquad&
    \forall\btens{\upsilon}\in\tPoly{k-1}(T;\Symm).
  \end{alignat}
\end{lemma}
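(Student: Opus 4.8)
The plan is to obtain all four identities from a single template: for each, exhibit an explicit candidate solution of the local problem defining the operator on the left-hand side and conclude by uniqueness. For the serendipity operators $\SV,\SSigma$ this is the unique solvability of \eqref{eq:serendipity.problem} granted by the inf-sup condition \eqref{eq:inf-sup}; for $\EPT$ (hence $\EV{T}$) it is the fact, from Proposition \ref{prop:continuity.inverses}, that $\VROT:\cHoly{k-1}(T)\to\vPoly{k-2}(T;\Real^2)$ is an isomorphism. Two elementary facts are used throughout. First, for $\bvec{v}\in\vPoly{k}(T;\Real^2)$ the element component of $\sIV{k}\bvec{v}=\RV{T}\IV{k}\bvec{v}$ is $\vlproj{\ell_T}{T}\bvec{v}$ and its boundary reconstruction coincides with $\bvec{v}_{|\partial T}$, the latter because $\bvec{v}_{|\partial T}$ satisfies the three conditions \eqref{eq:v.ET} that characterise it uniquely. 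Second, on each $E\in\ET$ one has $\partial_{\tangent_E}(\bvec{v}_{|E}\cdot\normal_E)=(\SYM\CURL\bvec{v})\normal_E\cdot\normal_E$ and $\partial_{\tangent_E}^2(\bvec{v}_{|E}\cdot\tangent_E)=\dE(\SYM\CURL\bvec{v})$ (borrowed from \cite{Chen.Huang:20}), together with $\mathbb{C}\GRAD\bvec{v}=\SYM\CURL\bvec{v}$ at the vertices, see \eqref{eq:def.tensor.C}. I shall also repeatedly use that $\RV{T}$ and $\RS{T}$ leave the boundary and vertex components untouched and act by $L^2$-projection only on a single internal component.

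For \eqref{eq:SV:polynomial.consistency} I will check that $(\btens{\sigma},\btens{\lambda})=(\SYM\CURL\bvec{v},\btens{0})$, which lies in $\tPoly{k-1}(T;\Symm)\times\cHoly{\ell_T+1}(T)$, solves \eqref{eq:serendipity.problem} with $\mathcal{L}_T=\mathcal{L}_{\bvec{V},T}(\sIV{k}\bvec{v};\cdot)$. Plugging this pair into \eqref{eq:AT} and matching term by term against \eqref{eq:LT.V} via the second fact above: the $\DIV\VDIV$ term of \eqref{eq:AT} vanishes since $\DIV\VDIV\SYM\CURL=0$; the edge and $\dE$ terms of \eqref{eq:AT} coincide with the first two terms of \eqref{eq:LT.V} once $\svec{v}_{\ET}$ is replaced by $\bvec{v}_{|\partial T}$; the vertex term matches because $\mathbb{C}\btens{G}_{\svec{v},V}=\SYM\CURL\bvec{v}(\bvec{x}_V)$; and the remaining block $\int_T\SYM\CURL\bvec{v}:\btens{\mu}$ of \eqref{eq:AT} equals, by the integration by parts formula \eqref{eq:ibp.VT} applied to $(\bvec{v},\btens{\mu})$ and by $\VROT\btens{\mu}\in\vPoly{\ell_T}(T;\Real^2)$ (Proposition \ref{prop:continuity.inverses}, so that $\vlproj{\ell_T}{T}$ may be inserted harmlessly in front of $\bvec{v}$), the last two terms of \eqref{eq:LT.V}. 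Uniqueness then yields \eqref{eq:SV:polynomial.consistency}. For \eqref{eq:EV:polynomial.consistency}, the edge and vertex components of $\EV{T}\sIV{k}\bvec{v}$ are literally those of $\IV{k}\bvec{v}$ by \eqref{eq:EV} and \eqref{eq:RV}, so it remains to identify $\EPT\sIV{k}\bvec{v}$: inserting \eqref{eq:SV:polynomial.consistency} and $\svec{v}_{\ET}=\bvec{v}_{|\partial T}$ into \eqref{eq:EPT} and comparing with \eqref{eq:ibp.VT} for $(\bvec{v},\btens{\tau})$ gives $\int_T\EPT\sIV{k}\bvec{v}\cdot\VROT\btens{\tau}=\int_T\bvec{v}\cdot\VROT\btens{\tau}$ for all $\btens{\tau}\in\cHoly{k-1}(T)$; since $\VROT$ maps onto $\vPoly{k-2}(T;\Real^2)$, this forces $\EPT\sIV{k}\bvec{v}=\vlproj{k-2}{T}\bvec{v}$, which is exactly the element component of $\IV{k}\bvec{v}$.

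For \eqref{eq:SS:polynomial.consistency} I will work from the compact expression \eqref{eq:LT.Sigma:bis} and show that $(\btens{\upsilon},\btens{0})$ solves \eqref{eq:serendipity.problem} with $\mathcal{L}_T=\mathcal{L}_{\btens{\Sigma},T}(\sISigma{k-1}\btens{\upsilon};\cdot)$. Writing out $\sISigma{k-1}\btens{\upsilon}=\RS{T}\ISigma{k-1}\btens{\upsilon}$ and using, for $\btens{\upsilon}\in\tPoly{k-1}(T;\Symm)$, that $\DD{k-2}\sISigma{k-1}\btens{\upsilon}=\DD{k-2}\ISigma{k-1}\btens{\upsilon}=\DIV\VDIV\btens{\upsilon}$ (because $\DD{k-2}$ only involves components untouched by $\RS{T}$, together with the commutation property of the full complex, \cite[Eq. (19)]{Di-Pietro.Droniou:22*1}, and $\DIV\VDIV\btens{\upsilon}\in\Poly{k-3}(T)$); that the edge datum $\ser{\upsilon}_E=\lproj{k-3}{E}(\btens{\upsilon}\normal_E\cdot\normal_E)$ tests against $\lproj{k-3}{E}(\btens{\tau}\normal_E\cdot\normal_E)$ without loss; that $D_{\stens{\upsilon},E}=\lproj{k-2}{E}\dE\btens{\upsilon}=\dE\btens{\upsilon}$ since $\dE\btens{\upsilon}\in\Poly{k-2}(E)$, and it tests against $\dE\btens{\tau}\in\Poly{k-2}(E)$; that $\stens{\upsilon}_V=\btens{\upsilon}(\bvec{x}_V)$; and that $\stens{\upsilon}_{\cvec{H},T}^\compl=\cHproj{\ell_T+1}\btens{\upsilon}$ tested against $\btens{\mu}\in\cHoly{\ell_T+1}(T)$ gives $\int_T\btens{\upsilon}:\btens{\mu}$ by self-adjointness of the $L^2$-projectors, one checks that \eqref{eq:LT.Sigma:bis} collapses exactly to $\mathcal{A}_T((\btens{\upsilon},\btens{0}),(\btens{\tau},\btens{\mu}))$ from \eqref{eq:AT}. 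Uniqueness gives \eqref{eq:SS:polynomial.consistency}. Finally \eqref{eq:ES:polynomial.consistency} follows component by component from \eqref{eq:ES} and \eqref{eq:RS}: the component in $\Holy{k-4}(T)$ and the edge and vertex components of $\ES{T}\sISigma{k-1}\btens{\upsilon}$ are those of $\ISigma{k-1}\btens{\upsilon}$ by inspection, while the remaining component is $\cHproj{k-1}\SSigma\sISigma{k-1}\btens{\upsilon}=\cHproj{k-1}\btens{\upsilon}=\btens{\pi}_{\cvec{H},T}^{\compl,k-1}\btens{\upsilon}$ by \eqref{eq:SS:polynomial.consistency}, which is precisely the $\cHoly{k-1}(T)$-component of $\ISigma{k-1}\btens{\upsilon}$.

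The computations are largely bookkeeping; the delicate part, and the place where an error would creep in, is tracking which inserted $L^2$-projections act trivially against which test polynomials, namely that $\dE\btens{\tau}\in\Poly{k-2}(E)$ for $\btens{\tau}\in\tPoly{k-1}(T;\Symm)$ and $\VROT\btens{\mu}\in\vPoly{\ell_T}(T;\Real^2)$ for $\btens{\mu}\in\cHoly{\ell_T+1}(T)$, on which the removals of $\lproj{k-2}{E}$ and $\vlproj{\ell_T}{T}$ hinge. The one genuinely external ingredient, without which the first term of \eqref{eq:LT.Sigma:bis} would not reduce to the $\DIV\VDIV$ term of \eqref{eq:AT}, is the polynomial consistency $\DD{k-2}\ISigma{k-1}=\lproj{k-2}{T}\DIV\VDIV$ of the full discrete div-div operator on $\tPoly{k-1}(T;\Symm)$, taken from \cite{Di-Pietro.Droniou:22*1}.
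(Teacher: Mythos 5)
Your proposal is correct and follows essentially the same route as the paper: exhibit $(\SYM\CURL\bvec{v},\btens{0})$ and $(\btens{\upsilon},\btens{0})$ as solutions of the serendipity problem \eqref{eq:serendipity.problem} and conclude by uniqueness, identify $\EPT\sIV{k}\bvec{v}=\vlproj{k-2}{T}\bvec{v}$ via the isomorphism $\VROT:\cHoly{k-1}(T)\to\vPoly{k-2}(T;\Real^2)$, and read off \eqref{eq:ES:polynomial.consistency} componentwise from \eqref{eq:ES} and \eqref{eq:SS:polynomial.consistency}. The auxiliary identifications you use (the trace identity $\svec{v}_{\ET}=\bvec{v}_{|\partial T}$, the edge identities from \cite{Chen.Huang:20}, the cancellation of the projectors $\vlproj{\ell_T}{T}$ and $\lproj{k-2}{E}$, and $\DD{k-2}\sISigma{k-1}\btens{\upsilon}=\DIV\VDIV\btens{\upsilon}$ via \eqref{eq:LT.Sigma:bis}) are exactly those of the paper's proof.
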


\begin{proof}
  \underline{(i) \emph{Proof of \eqref{eq:SV:polynomial.consistency}.}}
  Let $\suvec{v}_T\coloneq\sIV{k}\bvec{v}$.
  It suffices to show that $(\SYM\CURL\bvec{v},\btens{0})$ solves the problem defining $\SV\suvec{v}_T$, i.e., \eqref{eq:serendipity.problem} with linear form $\mathcal{L}_T(\cdot) = \mathcal{L}_{\bvec{V},T}(\suvec{v}_T;\cdot)$ given by \eqref{eq:LT.V}.
  Recalling the definition \eqref{eq:AT} of the bilinear form $\mathcal{A}_T$, we have, for all $(\btens{\tau},\btens{\mu})\in\tPoly{k-1}(T;\Symm)\times\cHoly{\ell_T+1}(T)$,
  \[
  \begin{aligned}
    \mathcal{A}_T((\SYM\CURL\bvec{v},\btens{0}),(\btens{\tau},\btens{\mu}))
    &= h_T^4\int_T\cancel{\DIV\VDIV\SYM\CURL\bvec{v}}~\DIV\VDIV\btens{\tau}
    \\
    &\quad
    + h_T\sum_{E\in\ET}\int_E\lproj{k-3}{E}\underbrace{(\SYM\CURL\bvec{v}\normal_E\cdot\normal_E}_{\partial_{\tangent_E}\bvec{v}_{|E}\cdot\normal_E})~\lproj{k-3}{E}(\btens{\tau}\normal_E\cdot\normal_E)
    \\
    &\quad
    + h_T^3\sum_{E\in\ET}\int_E\underbrace{\dE\SYM\CURL\bvec{v}}_{\partial_{\tangent_E}^2\bvec{v}_{|E}\cdot\tangent_E}~\dE\btens{\tau}
    + h_T^2\sum_{V\in\VT}\SYM\CURL\bvec{v}(\bvec{x}_V):\btens{\tau}(\bvec{x}_V),
    \\
    &\quad
      + \int_T\SYM\CURL\bvec{v}:\btens{\mu}.
  \end{aligned}
  \]
  where we have used \cite[Lemma 2.2]{Chen.Huang:20} for the second and third term.
    Using the integration by parts formula \eqref{eq:ibp.VT},
    observing that $\VROT\btens{\mu}\in\vPoly{\ell_T}(T)$ to insert $\vlproj{\ell_T}{T}$ into the first term and
    that $\bvec{v}_{|\partial T} = \svec{v}_{\ET}$ by polynomial consistency of this trace reconstruction,
    \[
    \int_T\SYM\CURL\bvec{v}:\btens{\mu}
    = -\int_T\underbrace{\vlproj{\ell_T}{T}\bvec{v}}_{=\,\svec{v}_T}\cdot\VROT\btens{\mu}
    + \sum_{E\in\ET}\omega_{TE}\int_{E}\svec{v}_{\ET}\cdot(\btens{\mu}\,\bvec{t}_E).
    \]
    Using the above relation, again $\bvec{v}_{|\partial T} = \svec{v}_{\ET}$, and further noticing that $\SYM\CURL\bvec{v}(\bvec{x}_V) = \mathbb{C}\btens{G}_{\svec{v},V}$ for all $V\in\VT$ by definition of the interpolator, we have, recalling the definition \eqref{eq:LT.V} of $\mathcal{L}_{\bvec{V},T}(\suvec{v}_T;\cdot)$,
  \[
  \mathcal{A}_T((\SYM\CURL\bvec{v},\btens{0}),(\btens{\tau},\btens{\mu}))
  = \mathcal{L}_{\bvec{V},T}(\suvec{v}_T;(\btens{\tau},\btens{\mu}))
  \qquad\forall(\btens{\tau},\btens{\mu})\in\tPoly{k-1}(T;\Symm)\times\cHoly{\ell_T+1}(T).
  \]
  Since problem \eqref{eq:serendipity.problem} is well-posed, this shows that $(\SYM\CURL\bvec{v},\btens{0})$ is its unique solution and, as a result, \eqref{eq:SV:polynomial.consistency} holds.
  \medskip\\
  \underline{(ii) \emph{Proof of \eqref{eq:EV:polynomial.consistency}.}}
  Set again $\suvec{v}_T\coloneq\sIV{k}\bvec{v}$.
  Starting from \eqref{eq:EPT}, using \eqref{eq:SV:polynomial.consistency} to write $\SV\suvec{v}_T = \SYM\CURL\bvec{v}$ along with the polynomial consistency of the trace $\svec{v}_{\ET} = \bvec{v}_{|\partial T}$, and concluding applying the integration by parts formula \eqref{eq:ibp.VT} to the right-hand side of the resulting expression, we have
  \[
  \int_T\EPT\suvec{v}_T\cdot\VROT\btens{\tau}
  = \int_T\bvec{v}\cdot\VROT\btens{\tau}
  \qquad\forall\btens{\tau}\in\cHoly{k-1}(T).
  \]
  Recalling that $\VROT:\cHoly{k-1}(T)\to\vPoly{k-2}(T)$ is an isomorphism, this shows that $\EPT\suvec{v}_T = \vlproj{k-2}{T}\bvec{v}$.
  Noticing that the other components of the local interpolator are not affected by the serendipity reduction process, \eqref{eq:EV:polynomial.consistency} follows.
  \\
  \underline{(iii) \emph{Proof of \eqref{eq:SS:polynomial.consistency}.}}
  It suffices to show that $(\btens{\upsilon},\btens{0})$ solves the problem defining $\SSigma\sISigma{k-1}\btens{\upsilon}$, i.e., \eqref{eq:serendipity.problem} with linear form $\mathcal{L}_T(\cdot) = \mathcal{L}_{\btens{\Sigma},T}(\sISigma{k-1}\btens{\upsilon};\cdot)$ given by \eqref{eq:LT.Sigma}.
  To this end, we use the alternative expression \eqref{eq:LT.Sigma:bis} of $\mathcal{L}_{\btens{\Sigma},T}(\sutens{\upsilon}_T;\cdot)$ based on the restriction of the operator $\DD{k-2}$ to $\sutens{\Sigma}_T^{k-1}$ resulting from the injection $\sutens{\Sigma}_T^{k-1}\hookrightarrow\utens{\Sigma}_T^{k-1}$.
  Since this operator only depends on the polynomial components of $\sutens{\Sigma}_T^{k-1}$ left unchanged by the serendipity reduction, by \cite[Eq.~(19)]{Di-Pietro.Droniou:22} it holds $\DD{k-2}\sISigma{k-1}\btens{\upsilon} = \DIV\VDIV\btens{\upsilon}$.
  Plugging this relation into \eqref{eq:LT.Sigma:bis} and recalling the definition \eqref{eq:sinterpolators} of $\sISigma{k-1}$, we obtain:
  For all $(\btens{\tau},\btens{\mu})\in\tPoly{k-1}(T;\Symm)\times\cHoly{\ell_T+1}(T)$,
  \[
  \begin{aligned}
    \mathcal{L}_{\btens{\Sigma},T}(\sISigma{k-1}\btens{\upsilon};\btens{\tau},\btens{\mu})
    &=
    h_T^4\int_T\DIV\VDIV\btens{\upsilon}~\DIV\VDIV\btens{\tau}
    \\
    &\quad
    + h_T\sum_{E\in\ET}\int_E\lproj{k-3}{E}(\btens{\upsilon}\normal_E\cdot\normal_E)~\lproj{k-3}{E}(\btens{\tau}\normal_E\cdot\normal_E)
    + h_T^3\sum_{E\in\ET}\int_E\cancel{\lproj{k-2}{E}}\dE\btens{\upsilon}~\dE\btens{\tau}
    \\
    &\quad
    + h_T^2\sum_{V\in\VT}\btens{\upsilon}(\bvec{x}_V):\btens{\tau}(\bvec{x}_V)
    + \int_T\cancel{\cHproj{\ell_T+1}}\btens{\upsilon}:\btens{\mu},
  \end{aligned}
  \]
  where the cancellation of the projectors is made possible by their definition.
  Comparing with the definition \eqref{eq:AT} of $\mathcal{A}_T$, we have thus proved that
  \[
  \mathcal{A}_T((\btens{\upsilon},\btens{0}),(\btens{\tau},\btens{\mu}))
  = \mathcal{L}_{\btens{\Sigma},T}(\sISigma{k-1}\btens{\upsilon};(\btens{\tau},\btens{\mu}))
  \qquad\forall(\btens{\tau},\btens{\mu})\in\tPoly{k-1}(T;\Symm)\times\cHoly{\ell_T+1}(T).
  \]
  By uniqueness of the solution to \eqref{eq:serendipity.problem}, this proves the assertion.
  \medskip\\
  \underline{(iv) \emph{Proof of \eqref{eq:ES:polynomial.consistency}.}}
  Immediate consequence of \eqref{eq:SS:polynomial.consistency} along with the definition \eqref{eq:ES} of $\ES{T}$.
\end{proof}

\begin{lemma}[Projections of extension and serendipity operators]
  It holds, for all $T\in\Th$,
  \begin{alignat}{4}\label{eq:vlproj.ell.T.EPT}
    \vlproj{\ell_T}{T}\EPT\suvec{v}_T &= \svec{v}_T
    &\qquad&
    \forall \suvec{v}_T\in\suvec{V}_T^k,
    \\ \label{eq:cHproj.k-1.Csym}
    \cHproj{k-1}\Csym{k-1}\EV{T}\suvec{v}_T &=
    \cHproj{k-1}\SV\suvec{v}_T,
    &\qquad&
    \forall \suvec{v}_T\in\suvec{V}_T^k,
    \\ \label{eq:cHproj.ellT+1.SSigma}
    \cHproj{\ell_T+1}\SSigma\sutens{\tau}_T
    &=\stens{\tau}_T^\compl
    &\qquad&
    \forall\sutens{\tau}_T\in\sutens{\Sigma}_T^{k-1}.
  \end{alignat}      
\end{lemma}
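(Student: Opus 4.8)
The plan is to derive all three identities from the defining relations by testing the serendipity problem \eqref{eq:serendipity.problem} (and the linear relation \eqref{eq:EPT} defining $\EPT$) against pairs of the special form $(\btens{0},\btens{\mu})$, and then to convert the resulting weak equalities into $L^2$-projection statements using the surjectivity, recorded in Proposition \ref{prop:continuity.inverses}, of $\VROT$ from $\cHoly{\ell_T+1}(T)$ onto $\vPoly{\ell_T}(T;\Real^2)$ and from $\cHoly{k-1}(T)$ onto $\vPoly{k-2}(T;\Real^2)$. I will also use throughout that $\cHoly{\ell_T+1}(T)\subset\cHoly{k-1}(T)$ by \eqref{eq:ell.T}.

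I would first dispatch \eqref{eq:cHproj.ellT+1.SSigma}, which is immediate. Taking $(\btens{\tau},\btens{\mu})=(\btens{0},\btens{\mu})$ with $\btens{\mu}\in\cHoly{\ell_T+1}(T)$ arbitrary in \eqref{eq:serendipity.problem} with $\mathcal{L}_T=\mathcal{L}_{\btens{\Sigma},T}(\sutens{\tau}_T;\cdot)$, every term of $\mathcal{A}_T$ in \eqref{eq:AT} carrying $\btens{\tau}$ drops out, so the left-hand side reduces to $\int_T\SSigma\sutens{\tau}_T:\btens{\mu}$, while the right-hand side \eqref{eq:LT.Sigma} (equivalently \eqref{eq:LT.Sigma:bis}) collapses to $\int_T\stens{\tau}_{\cvec{H},T}^\compl:\btens{\mu}$. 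Since $\btens{\mu}$ ranges over $\cHoly{\ell_T+1}(T)$ and $\stens{\tau}_T^\compl=\stens{\tau}_{\cvec{H},T}^\compl\in\cHoly{\ell_T+1}(T)$, this says exactly $\cHproj{\ell_T+1}\SSigma\sutens{\tau}_T=\stens{\tau}_T^\compl$.

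Next I would treat \eqref{eq:vlproj.ell.T.EPT}. The first step is to record the analogue of the previous manipulation for $\SV$: testing the problem that defines $\SV\suvec{v}_T$ against $(\btens{0},\btens{\mu})$ with $\btens{\mu}\in\cHoly{\ell_T+1}(T)$ and reading off \eqref{eq:AT} and \eqref{eq:LT.V} yields $\int_T\SV\suvec{v}_T:\btens{\mu}=-\int_T\svec{v}_T\cdot\VROT\btens{\mu}+\sum_{E\in\ET}\omega_{TE}\int_E\svec{v}_{\ET}\cdot(\btens{\mu}\,\tangent_E)$. Substituting this into \eqref{eq:EPT} — legitimate since $\btens{\mu}\in\cHoly{\ell_T+1}(T)\subset\cHoly{k-1}(T)$ — makes the two edge terms cancel, leaving $\int_T\EPT\suvec{v}_T\cdot\VROT\btens{\mu}=\int_T\svec{v}_T\cdot\VROT\btens{\mu}$ for all $\btens{\mu}\in\cHoly{\ell_T+1}(T)$; since $\VROT$ maps $\cHoly{\ell_T+1}(T)$ onto $\vPoly{\ell_T}(T;\Real^2)$, this gives $\vlproj{\ell_T}{T}\EPT\suvec{v}_T=\vlproj{\ell_T}{T}\svec{v}_T=\svec{v}_T$.

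Finally, for \eqref{eq:cHproj.k-1.Csym} the key observation is that the edge and vertex components of $\EV{T}\suvec{v}_T$ in \eqref{eq:EV} coincide with those of $\suvec{v}_T$, so the boundary reconstruction \eqref{eq:v.ET} associated with $\EV{T}\suvec{v}_T$ equals $\svec{v}_{\ET}$, while the element component of $\EV{T}\suvec{v}_T$ is $\EPT\suvec{v}_T$. I would then write \eqref{eq:CsymT} for $\uvec{v}_T=\EV{T}\suvec{v}_T$ against a test tensor $\btens{\tau}\in\cHoly{k-1}(T)$, obtaining $\int_T\Csym{k-1}\EV{T}\suvec{v}_T:\btens{\tau}=-\int_T\EPT\suvec{v}_T\cdot\VROT\btens{\tau}+\sum_{E\in\ET}\omega_{TE}\int_E\svec{v}_{\ET}\cdot(\btens{\tau}\,\tangent_E)$, and use \eqref{eq:EPT} to rewrite the first term on the right; the boundary contributions cancel, leaving $\int_T\Csym{k-1}\EV{T}\suvec{v}_T:\btens{\tau}=\int_T\SV\suvec{v}_T:\btens{\tau}$ for all $\btens{\tau}\in\cHoly{k-1}(T)$, which is \eqref{eq:cHproj.k-1.Csym}. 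I do not expect a genuine obstacle here; the only delicate points are invoking the surjectivity results of Proposition \ref{prop:continuity.inverses} to turn the weak identities into $L^2$-projection identities, and keeping careful track of signs and of the inclusion $\cHoly{\ell_T+1}(T)\subset\cHoly{k-1}(T)$ that makes \eqref{eq:EPT} applicable to the test functions used above.
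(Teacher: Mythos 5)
Your proposal is correct and follows essentially the same route as the paper: testing the serendipity problems with pairs $(\btens{0},\btens{\mu})$, combining with the defining relation \eqref{eq:EPT} so that the boundary terms cancel, and converting the resulting weak identities into projection statements via the isomorphism properties of $\VROT$ from Proposition \ref{prop:continuity.inverses} (resp.\ the definition of the $L^2$-projectors). The only difference is the order in which the three identities are treated, which is immaterial.
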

\begin{proof}
  \underline{(i) \emph{Proof of \eqref{eq:vlproj.ell.T.EPT}.}}
  For any $\btens{\mu}\in\cHoly{\ell_T+1}(T)$, taking tests functions of the form $(\btens{0},\btens{\mu})$ with $\btens{\mu}\in\cHoly{\ell_T+1}(T)$ in the problem defining $\SV$ (i.e., \eqref{eq:serendipity.problem} with $\mathcal{L}_T(\cdot) = \mathcal{L}_T(\suvec{v}_T;\cdot)$ given by \eqref{eq:LT.V}), it is inferred that
  \begin{equation}\label{eq:vlproj.ell.T.EPT:1}
    \int_T\SV\suvec{v}_T:\btens{\mu}
    = -\int_T\svec{v}_T\cdot\VROT\btens{\mu}
    + \sum_{E\in\ET}\omega_{TE}\int_E\svec{v}_{\ET}\cdot(\btens{\mu}\tangent_E).
  \end{equation}
  On the other hand, by definition \eqref{eq:EPT} of $\EPT$, and since $\btens{\mu}\in\cHoly{\ell_T + 1}(T) \subset \cHoly{k - 1}(T)$ (recall that $\ell_T + 1\le k - 1$ by \eqref{eq:ell.T}), we have
  \begin{equation}\label{eq:vlproj.ell.T.EPT:2}
    -\int_E\EPT\suvec{v}_T\cdot\VROT\btens{\mu}
    = \int_T\SV\suvec{v}_T:\btens{\mu}
    - \sum_{E\in\ET}\omega_{TE}\int_E\svec{v}_{\ET}\cdot(\btens{\mu}\tangent_E).
  \end{equation}
  Summing \eqref{eq:vlproj.ell.T.EPT:1} and \eqref{eq:vlproj.ell.T.EPT:2}, \eqref{eq:vlproj.ell.T.EPT} follows recalling that $\VROT:\cHoly{\ell_T+1}(T)\to\vPoly{\ell_T}(T)$ is an isomorphism.
  \medskip\\
  \underline{(ii) \emph{Proof of \eqref{eq:cHproj.k-1.Csym}.}}
  Using the definition \eqref{eq:CsymT} of $\Csym{k-1}$ for $\uvec{v}_T = \EV{T}\suvec{v}_T$ and recalling the definition \eqref{eq:EV} of $\EV{T}$, we can write, for any $\btens{\tau}\in\cHoly{k-1}(T)$, 
  \[
  \int_T\Csym{k-1}\EV{T}\suvec{v}_T:\btens{\tau}
  = -\int_E\EPT\suvec{v}_T\cdot\VROT\btens{\tau}
  + \sum_{E\in\ET}\omega_{TE}\int_E\svec{v}_{\ET}\cdot(\btens{\tau}\tangent_E)
  = \int_E\SV\suvec{v}_T:\btens{\tau},
  \]
 where the conclusion follows from the definition \eqref{eq:EPT} of $\EPT$.
 Then, \eqref{eq:cHproj.k-1.Csym} follows by definition of the $L^2$-orthogonal projector on $\cHoly{k-1}(T)$.
  \medskip\\
  \underline{(iii) \emph{Proof of \eqref{eq:cHproj.ellT+1.SSigma}.}}
  It suffices to take test functions of the form $(\btens{0},\btens{\mu})$ with $\btens{\mu}$ spanning $\cHoly{\ell_T+1}(T)$ in the problem defining $\SSigma$, that is \eqref{eq:serendipity.problem} with linear form $\mathcal{L}_T(\cdot) = \mathcal{L}_{\btens{\Sigma},T}(\sutens{\tau}_T;\cdot)$ given by \eqref{eq:LT.Sigma}.
\end{proof}

\subsection{Commutation property for the serendipity operators}

\begin{lemma}[Commutation property for the serendipity operators]
  Recalling that, according to \eqref{eq:s.operators}, $\suCsym{k-1} = \RS{T}\uCsym{k-1}\EV{T}$, it holds
  \begin{equation}\label{eq:commutation}
    \SSigma\suCsym{k-1}\suvec{v}_T = \SV\suvec{v}_T\qquad\forall\suvec{v}_T\in\suvec{V}_T^k,
  \end{equation}
  so that the following diagram commutes:
  \[
  \begin{tikzcd}
    \suvec{V}_T^k \arrow[r, "\SV"] \arrow[rd, swap, "\suCsym{k-1}"] & \tPoly{k-1}(T;\Symm) \\ 
    {} & \sutens{\Sigma}_T^{k-1} \arrow[u, swap, "\SSigma"] 
  \end{tikzcd}
  \]
\end{lemma}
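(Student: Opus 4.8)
The plan is to exploit that, by construction, $\SV\suvec{v}_T$ and $\SSigma(\suCsym{k-1}\suvec{v}_T)$ are the first components of the (unique) solution of the well-posed problem \eqref{eq:serendipity.problem}, the two differing only in the right-hand side linear form — \eqref{eq:LT.V} for the former and \eqref{eq:LT.Sigma} (equivalently \eqref{eq:LT.Sigma:bis}) for the latter. Hence, recalling from \eqref{eq:s.operators} that $\suCsym{k-1} = \RS{T}\uCsym{k-1}\EV{T}$, it suffices, by uniqueness for \eqref{eq:serendipity.problem}, to prove the identity of linear forms
\[
  \mathcal{L}_{\bvec{V},T}(\suvec{v}_T;\btens{\tau},\btens{\mu})
  = \mathcal{L}_{\btens{\Sigma},T}(\suCsym{k-1}\suvec{v}_T;\btens{\tau},\btens{\mu})
  \qquad\forall(\btens{\tau},\btens{\mu})\in\tPoly{k-1}(T;\Symm)\times\cHoly{\ell_T+1}(T).
\]
Both solutions then coincide, and so do, in particular, their first components, which is \eqref{eq:commutation}.

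First I would read off the components of $\suCsym{k-1}\suvec{v}_T$. By \eqref{eq:EV} the edge and vertex components of $\EV{T}\suvec{v}_T$ are those of $\suvec{v}_T$, so the boundary reconstruction \eqref{eq:v.ET} built from $\EV{T}\suvec{v}_T$ equals $\svec{v}_{\ET}$; combining \eqref{eq:uCsymT} with \eqref{eq:RS}, the edge components of $\suCsym{k-1}\suvec{v}_T$ are $\lproj{k-3}{E}(\partial_{\tangent_E}\svec{v}_{\ET}\cdot\normal_E)$ and $\partial_{\tangent_E}^2\svec{v}_{\ET}\cdot\tangent_E$, its vertex components are $\mathbb{C}\btens{G}_{\svec{v},V}$, and its $\cHoly{\ell_T+1}(T)$-component is $\cHproj{\ell_T+1}\cHproj{k-1}\Csym{k-1}\EV{T}\suvec{v}_T$, which equals $\cHproj{\ell_T+1}\SV\suvec{v}_T$ using $\cHoly{\ell_T+1}(T)\subset\cHoly{k-1}(T)$ (as $\ell_T+1\le k-1$ by \eqref{eq:ell.T}) and \eqref{eq:cHproj.k-1.Csym}. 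I would then compare \eqref{eq:LT.Sigma:bis} with \eqref{eq:LT.V} group by group. The $h_T^4$-term of \eqref{eq:LT.Sigma:bis} carries $\DD{k-2}(\suCsym{k-1}\suvec{v}_T)$; since $\DD{k-2}$ (cf. \eqref{eq:DDT}) only involves the components left unchanged by $\RS{T}$, this equals $\DD{k-2}\uCsym{k-1}\EV{T}\suvec{v}_T$, which vanishes by the local complex property $\DD{k-2}\circ\uCsym{k-1}=\bvec{0}$ of the DDR div-div sequence \cite{Di-Pietro.Droniou:22*1}, matching the absence of such a term in \eqref{eq:LT.V}. Inserting the components above, the two edge terms and the vertex term of \eqref{eq:LT.Sigma:bis} become verbatim the first three terms of \eqref{eq:LT.V}. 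Finally, the last term reads $\int_T\cHproj{\ell_T+1}\SV\suvec{v}_T:\btens{\mu} = \int_T\SV\suvec{v}_T:\btens{\mu}$ for $\btens{\mu}\in\cHoly{\ell_T+1}(T)$, which by \eqref{eq:vlproj.ell.T.EPT:1} equals $-\int_T\svec{v}_T\cdot\VROT\btens{\mu} + \sum_{E\in\ET}\omega_{TE}\int_E\svec{v}_{\ET}\cdot(\btens{\mu}\,\tangent_E)$, i.e., the two remaining ($\btens{\mu}$-dependent) terms of \eqref{eq:LT.V}. This yields the identity of the two linear forms, hence \eqref{eq:commutation} and the commutativity of the stated diagram.

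The argument is essentially a careful unwinding of definitions; the only inputs that are not purely mechanical are the vanishing $\DD{k-2}\circ\uCsym{k-1}=\bvec{0}$ — a complex property of the \emph{original} DDR sequence, responsible for the disappearance of the $h_T^4$-term — and the identification of the $\btens{\mu}$-dependent term through \eqref{eq:cHproj.k-1.Csym} and \eqref{eq:vlproj.ell.T.EPT:1}. I expect the main obstacle to be the bookkeeping of the components of $\RS{T}\uCsym{k-1}\EV{T}\suvec{v}_T$, and in particular the collapse $\cHproj{\ell_T+1}\circ\cHproj{k-1}=\cHproj{\ell_T+1}$ together with the identification $\bvec{v}_{\ET}=\svec{v}_{\ET}$ for the reconstructed traces; that is the step I would double-check most carefully.
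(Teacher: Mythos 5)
Your proof is correct and follows essentially the same route as the paper: both reduce \eqref{eq:commutation}, via well-posedness and uniqueness for \eqref{eq:serendipity.problem}, to the identity $\mathcal{L}_{\btens{\Sigma},T}(\suCsym{k-1}\suvec{v}_T;\cdot)=\mathcal{L}_{\bvec{V},T}(\suvec{v}_T;\cdot)$, then check this term by term using the complex property $\DD{k-2}\circ\uCsym{k-1}=0$ for the $h_T^4$-term, the coincidence of the boundary data of $\EV{T}\suvec{v}_T$ with those of $\suvec{v}_T$ for the edge and vertex terms, and \eqref{eq:cHproj.k-1.Csym} together with the defining relation of $\SV$ for the $\btens{\mu}$-dependent terms. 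Your bookkeeping of the components of $\RS{T}\uCsym{k-1}\EV{T}\suvec{v}_T$, including the collapse of the nested projections, matches the paper's argument and is, if anything, spelled out more explicitly.
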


\begin{proof}
  Let $\suvec{v}_T\in\suvec{V}_T^k$ and set $\uvec{v}_T\coloneq\EV{T}\suvec{v}_T$.
  Recalling \eqref{eq:EV}, we have $\bvec{v}_E = \svec{v}_E$ for all $E\in\ET$ and $(\bvec{v}_V,\btens{G}_{\bvec{v},V}) = (\svec{v}_V,\btens{G}_{\svec{v},V})$ for all $V\in\VT$.
  We next analyse the expression \eqref{eq:LT.Sigma} of $\mathcal{L}_{\btens{\Sigma},T}(\sutens{\upsilon}_T;\cdot)$ when $\sutens{\upsilon}_T \coloneq \suCsym{k-1}\suvec{v}_T =\RS{T}\uCsym{k-1}\uvec{v}_T$ with the aim of showing that
  \begin{equation}\label{eq:commutation:0}
    \mathcal{L}_{\btens{\Sigma},T}(\sutens{\upsilon}_T;(\btens{\tau},\btens{\mu}))
    = \mathcal{L}_{\bvec{V},T}(\suvec{v}_T;(\btens{\tau},\btens{\mu}))
    \qquad\forall(\btens{\tau},\btens{\mu})\in\tPoly{k-1}(T;\Symm)\times\cHoly{\ell_T+1}(T).
  \end{equation}
  The conclusion follows from this relation proceeding as in \cite[Lemma 20]{Di-Pietro.Droniou:22}.
  
  We start by observing that, for all $q\in\Poly{k-2}(T)$,
  \begin{equation}\label{eq:commutation:1}
    \begin{aligned}
      &\int_T\stens{\upsilon}_{\cvec{H},T}:\HESS q
      - \sum_{E\in\ET}\omega_{TE}\left(
      \int_E\ser{\upsilon}_E~\partial_{\normal_E}q - \int_E D_{\stens{\upsilon},E}~q   
      \sum_{V\in\VE}\omega_{EV}(\stens{\upsilon}_V\normal_E\cdot\tangent_E)~q(\bvec{x}_V)\right)  
      \\
      &\quad
      = \int_T\Hproj{k-4}\Csym{k-1}\uvec{v}_T:\HESS q
      \\
      &\qquad
      -\sum_{E\in\ET}\omega_{TE}\left(
      \int_E\lproj{k-3}{E}(\partial_{\tangent_E}\bvec{v}_{\ET}\cdot\normal_E)~\partial_{\normal_E} q
      - \int_E(\partial_{\tangent_E}^2\bvec{v}_{\ET}\cdot\tangent_E)~q
      \right)
      \\
      &\qquad
      - \sum_{E\in\ET}\omega_{TE}\sum_{V\in\VE}\omega_{EV}(\mathbb{C}\btens{G}_{\bvec{v},V}\normal_E\cdot\tangent_E)~q(\bvec{x}_V)
      \\
      &\quad
      = \int_T\DD{k-2}\uCsym{k-1}\uvec{v}_T~q = 0,
    \end{aligned}
  \end{equation}
  where the second equality follows from the definitions \eqref{eq:uCsymT} of $\uCsym{k-1}$ and \eqref{eq:DDT} of $\DD{k-2}$,
  while the conclusion is a consequence of the fact that \eqref{eq:discrete.complex} defines a complex.
  This implies that the terms in the first three lines of \eqref{eq:LT.Sigma} vanish since $\DIV\VDIV\btens{\tau}\in\Poly{k-3}(T)\subset\Poly{k-2}(T)$.
  Additionally, from property \eqref{eq:cHproj.k-1.Csym} it follows that $\stens{\upsilon}_{\ctens{H},T}^\compl = \cHproj{\ell_T+1}\uCsym{k-1}\EV{T}\suvec{v}_T = \cHproj{\ell_T+1}\SV\suvec{v}_T$.
  Hence, for all $\btens{\mu}\in\cHoly{\ell_T+1}(T)$,
  \begin{equation}\label{eq:commutation:2}
    \begin{aligned}
      \int_T\stens{\upsilon}_{\ctens{H},T}^\compl:\btens{\mu}
      = \int_T\SV\suvec{v}_T:\btens{\mu}
      = -\int_T\svec{v}_T\cdot\VROT\btens{\mu}
      + \sum_{E\in\ET}\omega_{TE}\int_E\svec{v}_E\cdot(\btens{\mu}\tangent_E),
    \end{aligned}
  \end{equation}
  where the conclusion follows from the definition of $\SV$.
  Plugging \eqref{eq:commutation:1}--\eqref{eq:commutation:2} into \eqref{eq:LT.Sigma} and comparing with \eqref{eq:LT.V} proves \eqref{eq:commutation:0}.
\end{proof}

\subsection{Homological properties of the serendipity DDR sequence}

\begin{theorem}[Homological properties of the serendipity DDR sequence]\label{thm:homological.properties}
  The following properties hold:
  \begin{compactenum}
  \item Complex properties:
    \begin{alignat}{4}\label{eq:complex.V}
      \EV{h}\RV{h}\uvec{v}_h &= \uvec{v}_h 
      &\qquad&
      \forall\uvec{v}_h\in\Ker(\uCsym[h]{k-1}),
      \\ \label{eq:complex.S}
      \ES{h}\RS{h}\utens{\tau}_h - \utens{\tau}_h &\in \Image(\uCsym[h]{k-1})
      &\qquad&
      \forall \utens{\tau}_h\in\utens{\Sigma}_h^{k-1};
    \end{alignat}
  \item Cochain map properties for the reduction and extension maps:
    \begin{alignat}{4}
      \label{eq:cochain.EV}
      \EV{h}\sIV[h]{k}\bvec{v} &=
      \IV[h]{k}\bvec{v}
      &\qquad&
      \forall \bvec{v}\in\RT{1}(\Omega),
      \\ \label{eq:cochain.RS}
      \suCsym[h]{k-1}\RV{h}\uvec{v}_h &= \RS{h}\uCsym[h]{k-1}\uvec{v}_h 
      &\qquad&
      \forall \uvec{v}_h\in\uvec{V}_h^k,
      \\ \label{eq:cochain.ES}
      \ES{h}\suCsym[h]{k-1}\suvec{v}_h &= \uCsym[h]{k-1}\EV{h}\suvec{v}_h 
      &\qquad&
      \forall \suvec{v}_h\in\suvec{V}_h^k;
    \end{alignat}    
  \item Isomorphism properties for the cohomology groups:
    \begin{alignat}{4}\label{eq:isomorphism.V}
      \RV{h}\EV{h}\suvec{v}_h &= \suvec{v}_h
      &\qquad&
      \forall \suvec{v}_h\in\suvec{V}_h^k,
      \\ \label{eq:isomorphism.S}
      \RS{h}\ES{h}\sutens{\tau}_h &= \sutens{\tau}_h
      &\qquad&
      \forall\sutens{\tau}_h\in\sutens{\Sigma}_h^{k-1}.
    \end{alignat}
  \end{compactenum}
  Hence, the cohomologies of the top and bottom complexes in \eqref{eq:discrete.complex} are isomorphic.
\end{theorem}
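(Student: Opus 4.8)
The statement to prove is Theorem~\ref{thm:homological.properties}, which packages together the complex properties \eqref{eq:complex.V}--\eqref{eq:complex.S}, the cochain-map properties \eqref{eq:cochain.EV}--\eqref{eq:cochain.ES}, the isomorphism properties \eqref{eq:isomorphism.V}--\eqref{eq:isomorphism.S}, and the resulting isomorphism of cohomologies. The strategy is to verify the six identities directly from the definitions of the reduction/extension operators and the serendipity operators, exploiting the preliminary lemmas already established (polynomial consistency \eqref{eq:SV:polynomial.consistency}--\eqref{eq:ES:polynomial.consistency}, the projection identities \eqref{eq:vlproj.ell.T.EPT}--\eqref{eq:cHproj.ellT+1.SSigma}, and the commutation property \eqref{eq:commutation}), and then invoke the abstract serendipity framework of~\cite{Di-Pietro.Droniou:22} to conclude the cohomology isomorphism. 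All of these are local statements, so it suffices to argue on a fixed $T\in\Th$ and then assemble; I will work element by element throughout.

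\textbf{Steps in order.} First I would prove the two isomorphism identities \eqref{eq:isomorphism.V}--\eqref{eq:isomorphism.S}, i.e.\ $\RV{T}\EV{T} = \mathrm{id}$ and $\RS{T}\ES{T} = \mathrm{id}$, since these are the simplest: for the edge/vertex components this is immediate because $\EV{T}$ and $\ES{T}$ leave those untouched and $\RV{T}$, $\RS{T}$ act as identity on them; for the internal component of $\suvec{V}_T^k$ one uses \eqref{eq:vlproj.ell.T.EPT}, namely $\vlproj{\ell_T}{T}\EPT\suvec{v}_T = \svec{v}_T$; for the internal $\cHoly{\ell_T+1}$-component of $\sutens{\Sigma}_T^{k-1}$ one uses \eqref{eq:cHproj.ellT+1.SSigma} together with the fact that $\cHproj{\ell_T+1}\circ\cHproj{k-1} = \cHproj{\ell_T+1}$ on $\cHoly{k-1}(T)$, applied to $\ES{T}$'s definition \eqref{eq:ES}. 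Second, the cochain-map property \eqref{eq:cochain.ES} is precisely the global assembly of the local commutation relation: unfolding $\uCsym[h]{k-1}\EV{h}$ and $\ES{h}\suCsym[h]{k-1}$ on each $T$ and using \eqref{eq:commutation} $\SSigma\suCsym{k-1} = \SV$ together with \eqref{eq:cHproj.k-1.Csym} $\cHproj{k-1}\Csym{k-1}\EV{T} = \cHproj{k-1}\SV$ to match the $\cHoly{k-1}$-components, while all other components agree by construction of $\EV{T}$ and $\ES{T}$. Third, \eqref{eq:cochain.RS} follows similarly from $\suCsym[h]{k-1} = \RS{h}\uCsym[h]{k-1}\EV{h}$ by composing with $\RV{h}$ and using \eqref{eq:isomorphism.V} (which gives $\EV{h}\RV{h}$ acting as identity on the relevant components) — here one must be careful and rather use directly the definitions; it is cleanest to note that both sides have the same edge/vertex components and the same $\Holy{k-4}$-component automatically, and that the $\cHoly{\ell_T+1}$-component of both sides equals $\cHproj{\ell_T+1}\Csym{k-1}\uvec{v}_T$ — for the left-hand side via \eqref{eq:commutation:2}-type manipulations, for the right-hand side directly. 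Fourth, the cochain-map property \eqref{eq:cochain.EV} for the embeddings of $\RT{1}(\Omega)$ follows from the polynomial consistency \eqref{eq:EV:polynomial.consistency}: since $\RT{1}(\Omega)\subset\vPoly{k}(T;\Real^2)$ elementwise, $\EV{T}\sIV{T}\bvec{v} = \EV{T}\RV{T}\IV{T}\bvec{v} = \IV{T}\bvec{v}$ via \eqref{eq:sinterpolators} and \eqref{eq:EV:polynomial.consistency} (noting $\sIV{T} = \RV{T}\IV{T}$, so this is the statement $\EV{T}\RV{T}\IV{T} = \IV{T}$ on $\RT{1}$, which is exactly \eqref{eq:EV:polynomial.consistency} after recognising $\RV{T}\IV{T}\bvec v = \sIV{T}\bvec v$). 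Fifth, the complex properties: for \eqref{eq:complex.S}, one writes $\ES{T}\RS{T}\utens{\tau}_T - \utens{\tau}_T$ and observes that it has zero $\Holy{k-4}$-component, zero edge components, and zero vertex components, so it lies in the subspace whose only nonzero component is in $\cHoly{k-1}(T)$; such elements are in the image of $\uCsym[h]{k-1}$ by a surjectivity/exactness argument — more precisely one checks its image under $\DD[h]{k-2}$ vanishes (using \eqref{eq:DDT}, which only sees the unchanged components, so $\DD{T}{k-2}(\ES{T}\RS{T}\utens\tau_T - \utens\tau_T) = 0$) and then invokes the local exactness of the full DDR complex from~\cite{Di-Pietro.Droniou:22*1}; for \eqref{eq:complex.V}, if $\uCsym[h]{k-1}\uvec{v}_h = \uvec{0}$ then $\uvec{v}_h = \IV[h]{k}\bvec v$ for some $\bvec v\in\RT{1}(\Omega)$ by exactness of the full complex at $\uvec V_h^k$, whence $\EV{h}\RV{h}\uvec v_h = \EV{h}\RV{h}\IV[h]{k}\bvec v = \EV{h}\sIV[h]{k}\bvec v = \IV[h]{k}\bvec v = \uvec v_h$ using \eqref{eq:cochain.EV}. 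Finally, with \eqref{eq:complex.V}--\eqref{eq:isomorphism.S} established, the isomorphism of cohomologies is the direct conclusion of the abstract result \cite[Prop.~2.2 and its consequences]{Di-Pietro.Droniou:22}: $\RV{h}$, $\RS{h}$ (together with the identities on $\RT{1}(\Omega)$ and $\Poly{k-2}(\Th)$) form a cochain map, $\EV{h}$, $\ES{h}$ form a cochain map in the other direction, $\RV{h}\EV{h} = \mathrm{id}$ and $\RS{h}\ES{h} = \mathrm{id}$ show one composite is the identity, and \eqref{eq:complex.V}--\eqref{eq:complex.S} show the other composite is cochain-homotopic to the identity, so the induced maps on cohomology are mutually inverse isomorphisms.

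\textbf{Main obstacle.} The routine parts are the componentwise bookkeeping; the genuinely delicate point is \eqref{eq:complex.S}. One must show that $\ES{T}\RS{T}\utens\tau_T - \utens\tau_T$, which by construction is supported on the single component $\cHoly{k-1}(T)$, actually lies in the image of the local $\uCsym{k-1}$. The natural route is: (i) check it is in $\ker\DD{T}{k-2}$ — which holds because $\DD{T}{k-2}$ depends only on the components left unchanged by $\RS{T}$ and $\ES{T}$ (namely $\btens\tau_{\cvec H,T}$, $\tau_E$, $D_{\btens\tau,E}$, $\btens\tau_V$), so the difference maps to zero; and (ii) invoke the exactness of the \emph{full} DDR complex at $\utens\Sigma_T^{k-1}$, established in~\cite{Di-Pietro.Droniou:22*1}, to conclude $\ker\DD{T}{k-2} = \Image\uCsym{T}{k-1}$ locally (for contractible $T$). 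The subtlety is making sure the local-to-global passage is legitimate — i.e.\ that the difference $\ES{h}\RS{h}\utens\tau_h - \utens\tau_h$ is genuinely in the global image, which requires that the $\cHoly{k-1}(T)$-component one adjusts is purely internal (no shared edge/vertex DOFs are touched), so the local preimages under $\uCsym{T}{k-1}$ can be chosen with vanishing boundary components and then glued; this is exactly the situation handled abstractly in~\cite{Di-Pietro.Droniou:22}, and I would cite that rather than re-derive it. Alternatively, and more cleanly, one proves \eqref{eq:complex.V}--\eqref{eq:isomorphism.S} locally and then simply remarks that these are the hypotheses of the abstract serendipity lemma of~\cite{Di-Pietro.Droniou:22}, which yields both \eqref{eq:complex.S} globally and the cohomology isomorphism in one stroke — this is almost certainly the route the authors take, and I would follow suit, keeping the proof short.
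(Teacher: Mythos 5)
Your proposal is correct and follows essentially the same route as the paper: the isomorphism properties from \eqref{eq:vlproj.ell.T.EPT} and \eqref{eq:cHproj.ellT+1.SSigma}, the cochain-map properties by componentwise comparison using \eqref{eq:commutation}, \eqref{eq:cHproj.k-1.Csym} and the serendipity problem tested with $(\btens{0},\btens{\mu})$, the complex properties via exactness of the full local DDR complex, and the final appeal to the abstract framework for the cohomology isomorphism.

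One caveat on \eqref{eq:complex.S}, which you rightly single out as the delicate point. Your primary argument (the difference lies in $\Ker(\DD[h]{k-2})$, take local preimages under $\uCsym{k-1}$, normalise them so the boundary components vanish, and glue) is the paper's argument; but the normalisation step is not something you can outsource to the abstract serendipity framework of \cite{Di-Pietro.Droniou:22}: there the complex property is a \emph{hypothesis} to be verified for each specific complex, not a conclusion, so your closing suggestion that the abstract lemma ``yields \eqref{eq:complex.S} in one stroke'' does not work. What is actually needed, and what the paper supplies, is the concrete kernel characterisation of the boundary part of the discrete symmetric curl: since $\ES{T}\RS{T}\utens{\tau}_T-\utens{\tau}_T$ has zero edge and vertex components, the local preimage $\uvec{v}_T$ satisfies $\pi_{\mathcal{P},E}^{k-3}\partial_{\tangent_E}(\bvec{v}_{\ET}\cdot\normal_E)=0$, $\partial_{\tangent_E}^2(\bvec{v}_{\ET}\cdot\tangent_E)=0$ and $\mathbb{C}\btens{G}_{\bvec{v},V}=\btens{0}$, and one invokes the trace-level exactness result (Point 1 of Theorem 3 in the full DDR reference) to obtain $\bvec{w}_T\in\RT{1}(T)$ with $\bvec{v}_{\ET}=\bvec{w}_{T|\partial T}$; only after the substitution $\uvec{v}_T\gets\uvec{v}_T-\IV{k}\bvec{w}_T$ do the boundary components vanish and the local preimages patch into an element of $\uvec{V}_h^k$. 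With that ingredient made explicit, your proof is complete and coincides with the paper's.
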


\begin{remark}[Homological properties]
  The respective role of the above properties is the following:
  the \emph{complex properties} ensure that the serendipity DDR sequence is a cochain complex;
  thanks to the \emph{cochain map properties}, the reduction and extension maps are cochain maps;
  finally, the \emph{isomorphism properties} guarantee that the cohomology groups of the DDR and serendipity DDR complexes are isomorphic.  
  We additionally notice, in passing, that:
  \begin{compactitem}
  \item It would suffice for property \eqref{eq:complex.S} to hold for all $\utens{\tau}_h\in\Ker(\DD[h]{k-2})$ to ensure that the serendipity DDR sequence is a cochain complex;
  \item The cochain property for $\RV{h}$ (i.e., $\RV{h}\IV[h]{k}\bvec{w} = \sIV[h]{k}\bvec{w}$ for all $\bvec{w}\in\RT{1}(\Omega)$), holds by definition \eqref{eq:s.operators} of $\sIV[h]{k}$, and is therefore not listed in point 2.;
  \item Property \eqref{eq:isomorphism.V} (resp., \eqref{eq:isomorphism.S}) could be restricted to $\suvec{v}_h\in\Ker(\suCsym[h]{k-1})$ (resp., $\sutens{\tau}_h\in\Ker(\sDD[h]{k-2})$) for the isomorphism in cohomology to hold.%
  \end{compactitem}
\end{remark}

\begin{proof}[Proof of Theorem \ref{thm:homological.properties}]
  The isomorphism between the cohomologies of the top and bottom complexes in \eqref{eq:discrete.complex} is a straightforward consequence of \cite[Proposition 2]{Di-Pietro.Droniou:21*1} once we prove properties \eqref{eq:complex.V}--\eqref{eq:isomorphism.S}, which we do next.
  \medskip\\
  \underline{(i) \emph{Proof of \eqref{eq:complex.V}.}}
  We notice that $\uCsym[h]{k-1}\uvec{v}_h = \utens{0}$ implies $\uCsym{k-1}\uvec{v}_T = \utens{0}$ for all $T\in\Th$.
  The exactness of the local DDR complex proved in \cite[Theorem 3]{Di-Pietro.Droniou:22*1} then implies, for any $T\in\Th$, the existence of $\bvec{w}_T\in\RT{1}(T)$ such that $\uvec{v}_T = \IV{k}\bvec{w}_T$.
  We can then write $\EV{T}\RV{T}\uvec{v}_T = \EV{T}\RV{T}\IV{k}\bvec{w}_T = \EV{T}\sIV{k}\bvec{w}_T = \IV{k}\bvec{w}_T$, where we have used the definition \eqref{eq:s.operators} of $\sIV{k}$ in the second step and the polynomial consistency property \eqref{eq:EV:polynomial.consistency} (after observing that $\bvec{w}_T\in\vPoly{k}(T;\Real^2)$) to conclude.
  \medskip\\ 
  \underline{(ii) \emph{Proof of \eqref{eq:complex.S}.}}
  Let $\utens{\tau}_h\in\utens{\Sigma}_h^{k-1}$ and set $\sutens{\tau}_h \coloneq \RS{h}\utens{\tau}_h$.
  The components of $\utens{\tau}_h$ and $\ES{h}\sutens{\tau}_h$ on the mesh vertices and edges, as well as on $\Holy{k-4}(T)$, $T\in\Th$, coincide by definition of the restriction and extension operators (see \eqref{eq:ES} and \eqref{eq:RS}).
  Since $\DD{k-2}$ only depends on these components (see \eqref{eq:DDT}), this implies
   $\DD[h]{k-2}\ES{h}\sutens{\tau}_h = \DD[h]{k-2}\utens{\tau}_h$,
  i.e.
  \begin{equation}\label{eq:E.stau-tau}
    \ES{h}\sutens{\tau}_h - \utens{\tau}_h
    = \big(
    (\btens{0}, \cHproj{k-1}\SSigma\sutens{\tau}_T - \btens{\tau}_{\cvec{H},T}^\compl)_{T\in\Th},
    (0,0)_{E\in\Eh},
    (\btens{0})_{V\in\Vh}
    \big)\in\Ker(\DD[h]{k-2}).
  \end{equation}
  By exactness of the local DDR complex (see \cite[Theorem 3]{Di-Pietro.Droniou:22*1}), for all $T\in\Th$ there exists $\uvec{v}_T\in\uvec{V}_T^k$, defined up to the interpolate on $\uvec{V}_T^k$ of an element of $\RT{1}(T)$, such that $\ES{T}\sutens{\tau}_T - \utens{\tau}_T = \uCsym{k-1}\uvec{v}_T$ which additionally satisfies, by \eqref{eq:E.stau-tau},
  \[
  \begin{gathered}
    \text{   
      $\bvec{\pi}_{\cvec{P},E}^{k-3}\partial_{\tangent_E}(\bvec{v}_{\ET}\cdot\normal_E) = 0$ 
      and $\partial_{\tangent_E}^2(\bvec{v}_{\ET}\cdot\tangent_E) = 0$
      for all $E\in\ET$
    }
    \\
    \text{
      and $\mathbb{C}\btens{G}_{\bvec{v},V} = \btens{0}$ for all $V\in\VT$.
    }
  \end{gathered}
  \]
  Under these conditions, \cite[Point 1. of Theorem 3]{Di-Pietro.Droniou:22} yields the existence of $\bvec{w}_T\in\RT{1}(T)$ such that $\bvec{v}_{\ET} = \bvec{w}_{T|\partial T}$.
  Possibly up to the substitution $\uvec{v}_T\gets\uvec{v}_T - \IV{k}\bvec{w}_T$, we can therefore assume that $\bvec{v}_{\ET} = \bvec{0}$.
  Hence, the $\uvec{v}_T$, $T\in\Th$, can be patched together on internal edges to form an element of $\uvec{V}_h^k$.
  This concludes the proof of \eqref{eq:complex.S}.
  \medskip\\
  \underline{(iii) \emph{Proof of \eqref{eq:cochain.EV}.}}
  The cochain map property \eqref{eq:cochain.EV} for $\EV{h}$ immediately follows from \eqref{eq:EV:polynomial.consistency} applied to polynomials in $\RT{1}(T)\subset\vPoly{k}(T;\Real^2)$ for all $T\in\Th$.
  \medskip\\
  \underline{(iv) \emph{Proof of \eqref{eq:cochain.RS}.}}
  Let $\uvec{v}_h\in\uvec{V}_h^k$ and set, for the sake of brevity $\uvec{w}_h\coloneq\EV{h}\RV{h}\uvec{v}_h$.
  By \eqref{eq:s.operators}, $\suCsym[h]{k-1}\RV{h}\uvec{v}_h =  \RS{h}\uCsym[h]{k-1}\uvec{w}_h$.
  The components of $\uvec{w}_h$ and $\uvec{v}_h$ on the mesh vertices and edges coincide by definitions \eqref{eq:EV} of $\EV{h}$ and \eqref{eq:RV} of $\RV{h}$, hence so do the components of their discrete symmetric curls on the edges and vertices, as well as those on $\Holy{k-4}(T)$, $T\in\Th$ (notice that the first term in the right-hand side of \eqref{eq:CsymT} vanishes for $\btens{\tau}\in\Holy{k-4}(T)$ since $\VROT\HESS = \bvec{0}$).
  It only remains to prove the equality of the components on $\cHoly{\ell_T+1}(T)$, $T\in\Th$, which follows if we prove that:
  \begin{equation}\label{eq:cHproj.ellT+1.Csym}
    \cHproj{\ell_T+1}\Csym{k-1}\uvec{w}_T
    = \cHproj{\ell_T+1}\Csym{k-1}\uvec{v}_T   
    \quad \text{ for all $T\in\Th$}.
  \end{equation}
  Set $\suvec{v}_T \coloneq \RV{T}\uvec{v}_T$.
  By virtue of \eqref{eq:cHproj.k-1.Csym}, it suffices to prove that $\cHproj{\ell_T+1}\SV\suvec{v}_T = \cHproj{\ell_T+1}\Csym{k-1}\uvec{v}_T$.
  This relation can be established taking test functions of the form $(\btens{0}, \btens{\mu})$ with $\btens{\mu}\in\cHoly{\ell_T+1}(T)$ in the problem defining $\SV\uvec{v}_T$ (i.e., \eqref{eq:serendipity.problem} with linear form $\mathcal{L}_T(\cdot) = \mathcal{L}_{\bvec{V},T}(\suvec{v}_T;\cdot)$) to write
  \[
  \int_T\SV\suvec{v}_T:\btens{\mu}
  = -\int_T\cancel{\vlproj{\ell_T}{T}}\bvec{v}_T\cdot\VROT\btens{\mu}
  + \sum_{E\in\ET}\omega_{TE}\int_E\bvec{v}_T\cdot(\btens{\mu}\tangent_E)
  = \int_T\Csym{k-1}\uvec{v}_T:\btens{\mu},
  \]
  where we have used the fact that, by \eqref{eq:RV}, $\svec{v}_T = \vlproj{\ell_T}{T}\bvec{v}_T$ and $\svec{v}_{\ET} = \bvec{v}_{\ET}$ in the first step (and also cancelled the projector since $\VROT\btens{\mu}\in\vPoly{\ell_T}(T)$), while the conclusion follows from the definition \eqref{eq:CsymT} of $\Csym{k-1}$.
  This concludes the proof of \eqref{eq:cHproj.ellT+1.Csym} and, therefore, of \eqref{eq:cochain.RS}.
  \medskip\\
  \underline{(v) \emph{Proof of \eqref{eq:cochain.ES}.}}
  By \eqref{eq:s.operators}, \eqref{eq:cochain.ES} amounts to proving that $\ES{h}\RS{h}\uCsym[h]{k-1}\EV{h}\suvec{v}_h = \uCsym[h]{k-1}\EV{h}\suvec{v}_h$.
  Since $\ES{h}$ and $\RS{h}$ leave the components on mesh vertices, edges, as well as those on $\Holy{k-4}(T)$, $T\in\Th$, unaltered, the equality of this components in \eqref{eq:cochain.ES} is immediate.
  It only remains to prove the equality of the components on $\cHoly{k-1}(T)$, $T\in\Th$.
  To this purpose, it suffices to invoke \eqref{eq:commutation} and \eqref{eq:cHproj.k-1.Csym} to write:
  For all $T\in\Th$,
  \[
  \cHproj{k-1}\SSigma\suCsym{k-1}\suvec{v}_T
  = \cHproj{k-1}\SV\suvec{v}_T
  = \cHproj{k-1}\Csym{k-1}\EV{T}\suvec{v}_T.
  \]
  \\
  \underline{(vi) \emph{Proof of \eqref{eq:isomorphism.V} and \eqref{eq:isomorphism.S}.}}
  These relations are immediate consequences of, respectively, \eqref{eq:vlproj.ell.T.EPT} and \eqref{eq:cHproj.ellT+1.SSigma} along with the definitions \eqref{eq:RV} and \eqref{eq:RS} of the restrictions.%
\end{proof}

\subsection{Analytical properties of the serendipity complex}

Following \cite[Eq.~(2.3)]{Di-Pietro.Droniou:22}, for $\bullet\in\Th\cup\{h\}$, the discrete $L^2$-products and norms on $\suvec{V}_\bullet^k$ and $\sutens{\Sigma}_\bullet^{k-1}$ are defined setting, for all $\suvec{w}_\bullet,\,\suvec{v}_\bullet\in\suvec{V}_\bullet^k$ and $\sutens{\upsilon}_\bullet,\,\sutens{\tau}_\bullet\in\sutens{\Sigma}_\bullet^{k-1}$,
\begin{alignat}{4} \label{eq:l2prod.norm.sV}
  (\suvec{w}_\bullet,\suvec{v}_\bullet)_{\svec{V},\bullet}
  &\coloneq (\EV{\bullet}\suvec{w}_\bullet,\EV{\bullet}\suvec{v}_\bullet)_{\bvec{V},\bullet}
  &\quad\text{and}\quad
  \norm[\svec{V},\bullet]{\suvec{v}_\bullet}
  &\coloneq\norm[\bvec{V},\bullet]{\EV{\bullet}\suvec{v}_\bullet}, 
  \\ \label{eq:l2prod.norm.sSigma}
  (\sutens{\upsilon}_\bullet, \sutens{\tau}_\bullet)_{\stens{\Sigma},\bullet}
  &\coloneq (\ES{\bullet}\sutens{\upsilon}_\bullet, \ES{\bullet}\sutens{\tau}_\bullet)_{\btens{\Sigma},\bullet}
  &\quad\text{and}\quad
  \norm[\stens{\Sigma},\bullet]{\sutens{\tau}_\bullet}
  &\coloneq\norm[\btens{\Sigma},\bullet]{\ES{\bullet}\sutens{\upsilon}_\bullet}.
\end{alignat}

\begin{lemma}[Equivalence of norms on  $\suvec{V}_T^k$]\label{lem:norm.equiv.sVT}
  It holds $\norm[\svec{V},T]{{\cdot}} \simeq \tnorm[\bvec{V},T]{{\cdot}}$ on $\suvec{V}_T^k$.
\end{lemma}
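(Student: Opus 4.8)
The plan is to transfer the claim to the norm equivalence \eqref{eq:norm.equivalence} already available on the \emph{full} element space, by comparing the extension $\EV{T}\suvec{v}_T\in\uvec{V}_T^k$ with the image of $\suvec{v}_T$ under the natural injection $\suvec{V}_T^k\hookrightarrow\uvec{V}_T^k$. First I would recall that, by definition \eqref{eq:l2prod.norm.sV}, $\norm[\svec{V},T]{\suvec{v}_T}=\norm[\bvec{V},T]{\EV{T}\suvec{v}_T}$, and that the full-space equivalence \eqref{eq:norm.equivalence} (taken with $\bullet=T$, applied to $\EV{T}\suvec{v}_T$ seen as an element of $\uvec{V}_T^k$) gives $\norm[\bvec{V},T]{\EV{T}\suvec{v}_T}\simeq\tnorm[\bvec{V},T]{\EV{T}\suvec{v}_T}$. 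Hence it suffices to prove the equivalence $\tnorm[\bvec{V},T]{\EV{T}\suvec{v}_T}\simeq\tnorm[\bvec{V},T]{\suvec{v}_T}$, where on the right-hand side $\suvec{v}_T$ is understood as an element of $\uvec{V}_T^k$ through the injection.

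For this last equivalence I would observe that, by definition \eqref{eq:EV} of $\EV{T}$, the extension leaves the edge components $(\svec{v}_E)_{E\in\ET}$ and the vertex components $(\svec{v}_V,\btens{G}_{\svec{v},V})_{V\in\VT}$ unchanged; consequently, in view of the expression \eqref{eq:tnorm.V} of the component norm, $\tnorm[\bvec{V},T]{\EV{T}\suvec{v}_T}$ and $\tnorm[\bvec{V},T]{\suvec{v}_T}$ differ only through their interior contributions, namely $\norm[\bvec{L}^2(T;\Real^2)]{\EPT\suvec{v}_T}$ on one side and $\norm[\bvec{L}^2(T;\Real^2)]{\svec{v}_T}$ on the other. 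The bound $\tnorm[\bvec{V},T]{\suvec{v}_T}\le\tnorm[\bvec{V},T]{\EV{T}\suvec{v}_T}$ then follows from \eqref{eq:vlproj.ell.T.EPT}, which gives $\svec{v}_T=\vlproj{\ell_T}{T}\EPT\suvec{v}_T$, combined with the $L^2$-stability of the orthogonal projector; the reverse bound $\tnorm[\bvec{V},T]{\EV{T}\suvec{v}_T}\lesssim\tnorm[\svec{V},T]{\suvec{v}_T}$ is precisely \eqref{eq:EVT:continuity}. Chaining these two inequalities with the two equivalences recalled in the previous paragraph yields the lemma.

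I do not anticipate any serious obstacle: the analytical substance has been absorbed into \eqref{eq:norm.equivalence} (full-space norm equivalence) and \eqref{eq:EVT:continuity} (continuity of $\EV{T}$ for the component norms), both of which are at our disposal. The only point requiring care is the bookkeeping between the two distinct norms involved — $\norm[\svec{V},T]{{\cdot}}$, defined through $\EV{T}$ in \eqref{eq:l2prod.norm.sV}, and $\tnorm[\bvec{V},T]{{\cdot}}$, read componentwise through the injection — together with the elementary verification that $\EV{T}$ and the injection act identically on every component except the interior one, which moreover projects back onto $\svec{v}_T$ by \eqref{eq:vlproj.ell.T.EPT}.
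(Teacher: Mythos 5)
Your argument is correct and follows essentially the same route as the paper's proof: both directions rest on the definition \eqref{eq:l2prod.norm.sV}, the full-space equivalence \eqref{eq:norm.equivalence} applied to $\EV{T}\suvec{v}_T$, the continuity bound \eqref{eq:EVT:continuity}, and the identity \eqref{eq:vlproj.ell.T.EPT} combined with the $L^2$-boundedness of $\vlproj{\ell_T}{T}$ to compare the interior components. The only difference is presentational (you factor the chain through the intermediate equivalence $\tnorm[\bvec{V},T]{\EV{T}\suvec{v}_T}\simeq\tnorm[\bvec{V},T]{\suvec{v}_T}$, while the paper proves the two inequalities of the lemma directly), which changes nothing of substance.
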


\begin{proof}
  For all $\suvec{v}_T \in \suvec{V}_T^k$, we have
  \[
  \norm[\svec{V},T]{\suvec{v}_T}
  = \norm[\bvec{V},T]{\EV{T}{\suvec{v}_T}}
  \lesssim \tnorm[\bvec{V},T]{\EV{T}{\suvec{v}_T}}
  \lesssim\tnorm[\bvec{V},T]{\suvec{v}_T},
  \]
  where the first inequality comes from the norm equivalence \eqref{eq:norm.equivalence}, while the conclusion is \eqref{eq:EVT:continuity}.
  
  To prove the converse inequality, we use \eqref{eq:vlproj.ell.T.EPT} to write:
  \begin{equation*}
    \begin{aligned}
      \tnorm[\bvec{V},T]{\suvec{v}_T}^2 &=
      \norm[\bvec{L}^2(T;\Real^2)]{\vlproj{\ell_T}{T}\EPT\suvec{v}_T}^2 + \sum_{E\in\ET} h_T\norm[\bvec{L}^2(E;\Real^2)]{\svec{v}_E}^2
      + \sum_{V\in\VT} \left(
      h_T^2 |\svec{v}_V|^2 + h_T^4 |\btens{G}_{\svec{v},V}|^2
      \right)\\
      &\le
      \norm[\bvec{L}^2(T;\Real^2)]{\EPT\suvec{v}_T}^2+ \sum_{E\in\ET} h_T\norm[\bvec{L}^2(E;\Real^2)]{\svec{v}_E}^2
      + \sum_{V\in\VT} \left(
      h_T^2 |\svec{v}_V|^2 + h_T^4 |\btens{G}_{\svec{v},V}|^2
      \right)
      \\
      &=
      \tnorm[\bvec{V},T]{\EV{T}{\suvec{v}_T}}^2,
    \end{aligned}
  \end{equation*}
  where the inequality follows from the $L^2$-boundedness of $\vlproj{\ell_T}{T}$, while the conclusion is an immediate consequence of the definitions \eqref{eq:tnorm.V} of $\tnorm[\bvec{V},T]{{\cdot}}$ and \eqref{eq:EV} of $\EV{T}$.
  We then continue with the equivalence of norms \eqref{eq:norm.equivalence} and with \eqref{eq:l2prod.norm.sV} to write $\tnorm[\bvec{V},T]{\EV{T}{\suvec{v}_T}}^ 2 \lesssim \norm[\bvec{V},T]{\EV{T}{\suvec{v}_T}}^ 2= \norm[\svec{V},T]{\suvec{v}_T}^2$.
\end{proof}
\begin{remark}[Equivalence of norms on $\sutens{\Sigma}_T^{k-1}$]
  The uniform equivalence of $\norm[\stens{\Sigma},T]{{\cdot}}$ defined in \eqref{eq:l2prod.norm.sSigma} and $\tnorm[\btens{\Sigma},T]{{\cdot}}$ can be established in a similar way.
  Since this result is not needed in what follows, the details are left to the reader.
\end{remark}

\begin{theorem}[Analytical properties of the serendipity DDR complex]\label{thm:analytical.properties}
  The following properties hold:
  \begin{compactenum}
  \item \emph{Continuity of the reductions:}
    \begin{alignat}{4}\label{eq:RV:continuity}
      \norm[\svec{V},h]{\RV{h}\uvec{v}_h}
      &\lesssim\norm[\bvec{V},h]{\uvec{v}_h}
      &\qquad&\forall\uvec{v}_h\in\uvec{V}_h^k,
      \\ \label{eq:RS:continuity}
      \norm[\stens{\Sigma},h]{\RS{h}\utens{\tau}_h}
      &\lesssim\norm[\btens{\Sigma},h]{\utens{\tau}_h}
      &\qquad&\forall\utens{\tau}_h\in\utens{\Sigma}_h^{k-1};
    \end{alignat}
  \item \emph{Polynomial consistency:} For all $T\in\Th$,
    \begin{alignat}{4}\label{eq:R.E.V:polynomial.consistency}
      \EV{T}\RV{T}\IV{k}\bvec{v} &= \bvec{v}
      &\qquad&\forall\bvec{v}\in\vPoly{k}(T;\Real^2),
      \\ \label{eq:R.E.S:polynomial.consistency}
      \ES{T}\RS{T}\ISigma{k-1}\btens{\tau} &= \btens{\tau}
      &\qquad&\forall\btens{\tau}\in\tPoly{k-1}(T;\Symm).
    \end{alignat}
  \end{compactenum}
  Hence, Lemmas \ref{lem:poincare}, \ref{lem:consistency}, and \ref{lem:adjoint.consistency} hold with $(\uvec{V}_T^k,\utens{\Sigma}_T^{k-1})$ replaced by $(\suvec{V}_T^k,\sutens{\Sigma}_T^{k-1})$.
\end{theorem}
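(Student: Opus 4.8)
The plan is to establish the two families of properties stated in the theorem—continuity of the reductions and polynomial consistency of the reduction--extension pairs—and then to obtain the transfer of Lemmas \ref{lem:poincare}, \ref{lem:consistency}, and \ref{lem:adjoint.consistency} from the abstract framework of \cite{Di-Pietro.Droniou:22}, whose hypotheses are precisely these two families together with the homological properties of Theorem \ref{thm:homological.properties} and the norm equivalence of Lemma \ref{lem:norm.equiv.sVT}.

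\emph{Continuity of the reductions.} Both estimates are local. For \eqref{eq:RV:continuity} I would note that $\RV{T}\uvec{v}_T$ shares the boundary components of $\uvec{v}_T$ while its element component is $\vlproj{\ell_T}{T}\bvec{v}_T$; by the $L^2$-boundedness of $\vlproj{\ell_T}{T}$ the component norm of $\RV{T}\uvec{v}_T$ is therefore controlled by that of $\uvec{v}_T$, and combining this with \eqref{eq:l2prod.norm.sV}, the continuity estimate \eqref{eq:EVT:continuity} of $\EV{T}$, and the norm equivalence \eqref{eq:norm.equivalence} gives $\norm[\svec{V},T]{\RV{T}\uvec{v}_T}\lesssim\norm[\bvec{V},T]{\uvec{v}_T}$, whence \eqref{eq:RV:continuity} by summation over $T\in\Th$. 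For \eqref{eq:RS:continuity} no counterpart of \eqref{eq:EVT:continuity} is available; instead I would use that $\ES{T}\RS{T}\utens{\tau}_T$ and $\utens{\tau}_T$, seen in the full space $\utens{\Sigma}_T^{k-1}$, differ only through their $\cHoly{k-1}(T)$ component, equal to $\cHproj{k-1}\SSigma\RS{T}\utens{\tau}_T$ for the former. Using \eqref{eq:l2prod.norm.sSigma}, the norm equivalence \eqref{eq:norm.equivalence} on the \emph{full} space, and the $L^2$-boundedness of $\cHproj{k-1}$,
\[
\norm[\stens{\Sigma},T]{\RS{T}\utens{\tau}_T}^2
= \norm[\btens{\Sigma},T]{\ES{T}\RS{T}\utens{\tau}_T}^2
\simeq \tnorm[\btens{\Sigma},T]{\ES{T}\RS{T}\utens{\tau}_T}^2
\lesssim \tnorm[\btens{\Sigma},T]{\utens{\tau}_T}^2
+ \norm[\btens{L}^2(T;\Real^{2\times 2})]{\SSigma\RS{T}\utens{\tau}_T}^2,
\]
so only the last summand remains. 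Here I would invoke the a priori estimate \eqref{eq:a-priori} for the serendipity problem defining $\SSigma\RS{T}\utens{\tau}_T$ and bound the dual norm of $\mathcal{L}_{\btens{\Sigma},T}(\RS{T}\utens{\tau}_T;\cdot)$ term by term in the form \eqref{eq:LT.Sigma:bis}: since $\DD{k-2}$ depends only on components left unchanged by $\RS{T}$ one has $\DD{k-2}\RS{T}\utens{\tau}_T = \DD{k-2}\utens{\tau}_T$, with $\norm[L^2(T)]{\DD{k-2}\utens{\tau}_T}\lesssim h_T^{-2}\tnorm[\btens{\Sigma},T]{\utens{\tau}_T}$ by \cite[Eqs.~(57)--(59)]{Di-Pietro.Droniou:22*1}, and Cauchy--Schwarz together with discrete trace and inverse inequalities make the powers of $h_T$ in \eqref{eq:LT.Sigma:bis} cancel so as to give $\norm[T]{\mathcal{L}_{\btens{\Sigma},T}(\RS{T}\utens{\tau}_T;\cdot)}\lesssim\tnorm[\btens{\Sigma},T]{\utens{\tau}_T}$; \eqref{eq:norm.equivalence} and a sum over $T\in\Th$ then yield \eqref{eq:RS:continuity}.

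\emph{Polynomial consistency and transfer.} The identities \eqref{eq:R.E.V:polynomial.consistency} and \eqref{eq:R.E.S:polynomial.consistency} are immediate: by the definition \eqref{eq:sinterpolators} of the serendipity interpolators and the polynomial consistency \eqref{eq:EV:polynomial.consistency} of $\EV{T}$ one has $\EV{T}\RV{T}\IV{k}\bvec{v} = \EV{T}\sIV{k}\bvec{v} = \IV{k}\bvec{v}$ for $\bvec{v}\in\vPoly{k}(T;\Real^2)$, and likewise, using \eqref{eq:ES:polynomial.consistency}, $\ES{T}\RS{T}\ISigma{k-1}\btens{\tau} = \ES{T}\sISigma{k-1}\btens{\tau} = \ISigma{k-1}\btens{\tau}$ for $\btens{\tau}\in\tPoly{k-1}(T;\Symm)$. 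With continuity, polynomial consistency, the homological properties of Theorem \ref{thm:homological.properties}, and Lemma \ref{lem:norm.equiv.sVT} in hand, the three analytical lemmas transfer to the serendipity complex through the abstract results of \cite{Di-Pietro.Droniou:22}. As an illustration, for \eqref{eq:poincare.V} one takes $\uvec{v}_h\coloneq\EV{h}\suvec{v}_h$ for $\suvec{v}_h\in\suvec{V}_h^k$ subject to the serendipity orthogonality condition; by construction of the serendipity $L^2$-product the latter is \eqref{eq:orthogonality:sym.curl} written for $\PVh{k}\uvec{v}_h$, so $\uvec{v}_h$ satisfies \eqref{eq:orthogonality:sym.curl}; applying \eqref{eq:poincare.V} to $\uvec{v}_h$, combining with $\tnorm[\bvec{V},h]{\EV{h}\suvec{v}_h}\simeq\norm[\svec{V},h]{\suvec{v}_h}$ (from \eqref{eq:l2prod.norm.sV} and \eqref{eq:norm.equivalence}) on the left and with the cochain property \eqref{eq:cochain.ES} (which gives $\uCsym[h]{k-1}\EV{h}\suvec{v}_h = \ES{h}\suCsym[h]{k-1}\suvec{v}_h$, hence $\tnorm[\btens{\Sigma},h]{\uCsym[h]{k-1}\EV{h}\suvec{v}_h}\simeq\norm[\stens{\Sigma},h]{\suCsym[h]{k-1}\suvec{v}_h}$) on the right, yields the serendipity Poincar\'e inequality. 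The div-div Poincar\'e inequality \eqref{eq:poincare.Sigma} and the (adjoint) consistency estimates of Lemmas \ref{lem:consistency} and \ref{lem:adjoint.consistency} are obtained along the same lines, the polynomial consistency of the reduction--extension pairs being what lets one replace, up to remainders that decay at the optimal rate, the serendipity interpolates of smooth fields by the full ones.

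\emph{Main obstacle.} The delicate step is the continuity estimate \eqref{eq:RS:continuity}: unlike \eqref{eq:RV:continuity}, it cannot be read off an already-established continuity bound for $\ES{T}$, and it requires the a priori bound \eqref{eq:a-priori} for the serendipity operator $\SSigma$ together with a careful tracking of the several powers of $h_T$ entering the terms of $\mathcal{L}_{\btens{\Sigma},T}$.
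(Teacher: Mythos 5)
Your proposal is correct and follows essentially the same route as the paper: \eqref{eq:RV:continuity} via $L^2$-boundedness of $\vlproj{\ell_T}{T}$ combined with \eqref{eq:l2prod.norm.sV}, \eqref{eq:EVT:continuity} and \eqref{eq:norm.equivalence}, the polynomial consistencies read off directly from \eqref{eq:sinterpolators}, \eqref{eq:EV:polynomial.consistency} and \eqref{eq:ES:polynomial.consistency}, and the transfer of Lemmas \ref{lem:poincare}, \ref{lem:consistency}, and \ref{lem:adjoint.consistency} through Theorem \ref{thm:homological.properties} and the abstract results of \cite{Di-Pietro.Droniou:22}. The only place you go beyond the paper is \eqref{eq:RS:continuity}, which the paper dismisses as ``similar'' without details; your explicit argument—comparing $\ES{T}\RS{T}\utens{\tau}_T$ with $\utens{\tau}_T$ on the $\cHoly{k-1}(T)$ component and bounding $\SSigma\RS{T}\utens{\tau}_T$ through the a priori estimate \eqref{eq:a-priori} and a scaled dual-norm bound of $\mathcal{L}_{\btens{\Sigma},T}$ in the form \eqref{eq:LT.Sigma:bis}—is a sound way to supply the omitted step and is consistent with the ingredients the paper has in place.
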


\begin{proof}
  The fact that Lemmas \ref{lem:poincare}, \ref{lem:consistency}, and \ref{lem:adjoint.consistency} hold with $(\uvec{V}_T^k,\utens{\Sigma}_T^{k-1})$ replaced by $(\suvec{V}_T^k,\sutens{\Sigma}_T^{k-1})$ is a consequence of Theorem \ref{thm:homological.properties} along with the continuity of the interpolators \eqref{eq:I:boundedness} and \cite[Propositions 4--9]{Di-Pietro.Droniou:22} once properties \eqref{eq:RV:continuity}--\eqref{eq:R.E.S:polynomial.consistency} have been proved.
  We therefore turn to the latter.
  \medskip\\
  \underline{(i) \emph{Proof of \eqref{eq:RV:continuity} and \eqref{eq:RS:continuity}.}}
  Using the norm equivalence in Lemma \ref{lem:norm.equiv.sVT} and the definitions \eqref{eq:tnorm.V} of the component norm $\tnorm[\bvec{V},T]{{\cdot}}$ and \eqref{eq:RV} of $\RV{T}\uvec{v}_T$, we infer
  \begin{equation*}
    \begin{aligned}
      \norm[\svec{V},T]{\RV{T}\uvec{v}_T}^2 
      &\lesssim
      \tnorm[\bvec{V},T]{\RV{T}\uvec{v}_T}^2 \\
      &=
      \norm[\bvec{L}^2(T;\Real^2)] {\vlproj{\ell_T}{T}\bvec{v}_T}^2
      + \sum_{E\in\ET}h_T\norm[\bvec{L}^2(E;\Real^2)]{\bvec{v}_E}^2
      +\sum_{V\in\VT} \left( h_T^2  |\bvec{v}_V|^2+h_T^4|\btens{G}_{\bvec{v},V})|^2\right)
      \\
      &\lesssim
      \norm[\bvec{L}^2(T;\Real^2)] {\bvec{v}_T}^2
      + \sum_{E\in\ET}h_T\norm[\bvec{L}^2(E;\Real^2)]{\bvec{v}_E}^2
      + \sum_{V\in\VT} \left( h_T^2  |\bvec{v}_V|^2+h_T^4|\btens{G}_{\bvec{v},V})|^2\right)
    \end{aligned}
  \end{equation*}
  where the second line results from the $L^2$-boundedness of ${\vlproj{\ell_T}{T}}$.
  Noticing that the expression in the last line is precisely $\tnorm[\bvec{V},T]{\uvec{v}_T}^2$ and invoking the uniform norm equivalence \eqref{eq:norm.equivalence} with $\bullet=T$ concludes the proof of \eqref{eq:RV:continuity}.
  The proof of  \eqref{eq:RV:continuity} is similar and we omit the details for the sake of conciseness.
  \medskip\\
  \underline{(ii) \emph{Proof of \eqref{eq:R.E.V:polynomial.consistency} and \eqref{eq:R.E.S:polynomial.consistency}.}}
  Recalling the definition \eqref{eq:sinterpolators} of the interpolators on the serendipity spaces, properties \eqref{eq:R.E.V:polynomial.consistency} and \eqref{eq:R.E.S:polynomial.consistency} are nothing but \eqref{eq:EV:polynomial.consistency} and \eqref{eq:ES:polynomial.consistency}, respectively.
\end{proof}

\appendix
\section{Poincaré--Korn type inequalities in hybrid spaces}\label{appendix}

The proof of the functional inequality for hybrid vector fields that is used to establish point (i) of Lemma \ref{lem:poincare} (see Section \ref{sec:poincare:proof}) is presented below. First, we introduce some additional notations concerning tensor calculus in three dimensions. For a bounded, Lipschitz domain $D\subset\Real^3$ and for a sufficiently regular tensor field $\btens{P}: D\to\Real^{3\times3}$, we define
$$
 \TCURL\btens{P}\coloneq\begin{pmatrix}
    \partial_2 P_{13} - \partial_3 P_{12} &  \partial_3 P_{11} - \partial_1 P_{13} &  \partial_1 P_{12} - \partial_2 P_{11} \\
    \partial_2 P_{23} - \partial_3 P_{22} &  \partial_3 P_{21} - \partial_1 P_{23} &  \partial_1 P_{22} - \partial_2 P_{21} \\
    \partial_2 P_{33} - \partial_3 P_{32} &  \partial_3 P_{31} - \partial_1 P_{33} &  \partial_1 P_{32} - \partial_2 P_{31} 
    \end{pmatrix}.
$$
For later use, we also introduce the space $\boldsymbol{\mathcal{RM}}_3 \coloneq \{ \bvec{a}\times\bvec{x} + \bvec{b} \st \bvec{a},\bvec{b}\in\Real^3 \}$ of three-dimensional rigid-body motions and the operator $\ANTI:\Real^3\to\Real^{3\times3}$ given by
$$
\ANTI\bvec{a} \coloneq\begin{pmatrix}
    0 & - a_3 &  a_2\\
    a_3&  0 &  -a_1 \\
    -a_2&  a_1 &  0
    \end{pmatrix}\quad
    \forall \bvec{a}\in\Real^3.
$$

The discrete functional inequalities below hinge on \cite[Theorem 3.3]{Lewintan.Muller.ea:21}, which the authors refer to as \emph{incompatible Korn type inequality for $L^p$-regular tensor fields}. For the sake of clarity, we recall the statement of this key result. 
\begin{lemma}[Incompatible Korn type inequality]
  \label{lem:incomp_korn}
  Let $D\subset\Real^3$ be a bounded, Lipschitz domain and let $p\in (1,\infty)$. Then, there exists $C_{\rm IK}>0$ depending only on $D$ and $p$ such that, for all $\btens{P}\in L^p(D;\Real^{3\times3})$,
  \begin{equation}\label{eq:incomp_korn}
    \inf_{\bvec{w}\in \boldsymbol{\mathcal{RM}}_3}\norm[\btens{L}^p(D;\Real^{3\times3})]{\btens{P} - \ANTI\bvec{w}} \le 
    C_{\rm IK}\left(
    \norm[\btens{L}^p(D;\Real^{3\times3})]{\SYM\btens{P}}
    + \norm[\btens{W}^{-1,p}(D;\Real^{3\times3})]{\SYM\TCURL\btens{P}}
    \right).
  \end{equation}
\end{lemma}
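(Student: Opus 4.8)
This lemma is reproduced verbatim from \cite[Theorem~3.3]{Lewintan.Muller.ea:21}, so within the present paper the ``proof'' is simply that reference; I nonetheless record the strategy I would follow to reconstruct it, as it explains the shape of the estimate. The plan is as follows: split $\btens{P}\in L^p(D;\Real^{3\times3})$ into its symmetric and skew parts, $\btens{P}=\SYM\btens{P}+\ANTI\bvec{a}$, where $\bvec{a}$ is the axial vector of the skew part; observe that $\SYM\btens{P}$ is already controlled (it appears on the right-hand side of \eqref{eq:incomp_korn}), so that everything reduces to estimating $\ANTI\bvec{a}$, equivalently $\bvec{a}$, in $L^p$ modulo $\boldsymbol{\mathcal{RM}}_3$; finally recombine. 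As a sanity check on the correct quotient space, one verifies first that the common kernel of $\btens{P}\mapsto\SYM\btens{P}$ and $\btens{P}\mapsto\SYM\TCURL\btens{P}$ is exactly $\{\ANTI\bvec{w}\st\bvec{w}\in\boldsymbol{\mathcal{RM}}_3\}$.

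The algebraic heart is Nye's formula $\TCURL(\ANTI\bvec{a})=(\DIV\bvec{a})\btens{I}-\nabla\bvec{a}$ (entrywise convention $(\nabla\bvec{a})_{ij}=\partial_i a_j$), obtained by direct computation from the definitions of $\TCURL$ and $\ANTI$ recalled above, together with the elementary fact that $\tr(\TCURL\btens{S})=0$ for any symmetric $\btens{S}$. These give $\TCURL\btens{P}=\TCURL(\SYM\btens{P})+(\DIV\bvec{a})\btens{I}-\nabla\bvec{a}$; taking the trace yields $\DIV\bvec{a}=\tfrac12\tr(\SYM\TCURL\btens{P})$, and symmetrising yields $\SYM\nabla\bvec{a}=(\DIV\bvec{a})\btens{I}+\SYM\TCURL(\SYM\btens{P})-\SYM\TCURL\btens{P}$. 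The point is that only the symmetric part of $\TCURL\btens{P}$ survives these manipulations; since $\TCURL$ is a first-order operator, $\norm[\btens{W}^{-1,p}(D)]{\SYM\TCURL(\SYM\btens{P})}\lesssim\norm[\btens{L}^p(D)]{\SYM\btens{P}}$, and the two identities above combine to $\norm[\btens{W}^{-1,p}(D)]{\SYM\nabla\bvec{a}}\lesssim\norm[\btens{L}^p(D)]{\SYM\btens{P}}+\norm[\btens{W}^{-1,p}(D)]{\SYM\TCURL\btens{P}}$.

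To close, I would invoke the negative (second) Korn inequality in the $L^p$ scale: on a bounded Lipschitz domain, $\inf_{\bvec{w}\in\boldsymbol{\mathcal{RM}}_3}\norm[\bvec{L}^p(D)]{\bvec{a}-\bvec{w}}\lesssim\norm[\btens{W}^{-1,p}(D)]{\SYM\nabla\bvec{a}}$, which follows from the Ne\v{c}as lemma (characterisation of $L^p$ by $W^{-1,p}$-bounded gradients) combined with the classical identity $\partial_k(\operatorname{skew}\nabla\bvec{a})_{ij}=\partial_i(\SYM\nabla\bvec{a})_{jk}-\partial_j(\SYM\nabla\bvec{a})_{ik}$ and a Peetre-type compactness argument to absorb lower-order terms. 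Chaining this with the previous bound, then writing $\btens{P}-\ANTI\bvec{w}=\SYM\btens{P}+\ANTI(\bvec{a}-\bvec{w})$ and using $\norm[\btens{L}^p(D)]{\ANTI(\bvec{a}-\bvec{w})}\simeq\norm[\bvec{L}^p(D)]{\bvec{a}-\bvec{w}}$, gives \eqref{eq:incomp_korn}. I expect the genuinely hard parts --- and the reason for citing rather than reproving --- to be (i) the validity of the Ne\v{c}as and negative-Korn estimates in $L^p$ for merely Lipschitz domains, which rests on Calder\'on--Zygmund singular-integral theory rather than on Hilbert-space duality, and (ii) the density/regularisation argument required to justify the algebraic identities when $\btens{P}$ is only assumed to lie in $L^p$.
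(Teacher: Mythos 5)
Your treatment matches the paper's: Lemma \ref{lem:incomp_korn} is not proved in the paper but is quoted verbatim from \cite[Theorem 3.3]{Lewintan.Muller.ea:21}, exactly as you indicate. Your reconstruction sketch (splitting off the skew part, Nye's identity $\TCURL(\ANTI\bvec{a})=(\DIV\bvec{a})\btens{I}-\nabla\bvec{a}$ together with $\tr\TCURL\btens{S}=0$ for symmetric $\btens{S}$, then a negative-norm Korn/Ne\v{c}as estimate in the $L^p$ scale on Lipschitz domains) faithfully reflects the strategy of the cited reference, including the correct identification of the kernel $\{\ANTI\bvec{w}\st\bvec{w}\in\boldsymbol{\mathcal{RM}}_3\}$, so nothing further is needed.
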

It has been observed in \cite{Neff.Pauly.ea:15,Lewintan.Muller.ea:21} that the previous result can be seen as a generalisation of both the Poincar\'e--Wirtinger and Korn's second inequalities. In the following Proposition, we apply Lemma \ref{lem:incomp_korn} to some particular cases in which the tensor field $\btens{P}$ is skew-symmetric and assuming $p=2$.
\begin{proposition}[Poincar\'e--Korn inequalities for $L^2$-regular vector fields]
  \label{prop:poin-korn_L2}
  Let $D\subset\Real^n$, with $n\in\{2,3\}$ be a bounded, Lipschitz domain. Then, the following inequalities hold:
  \begin{alignat}{2}\label{eq:poincare}
    \inf_{\overline{\bvec{u}}\in\cvec{P}^0(D;\Real^d)}\norm[\bvec{L}^2(D;\Real^d)]{\bvec{u} - \overline{\bvec{u}}}
    &\lesssim 
    C_{\rm IK}\norm[\btens{H}^{-1}(D;\Real^{n\times d})]{\GRAD\bvec{u}}
    &\quad& \forall \bvec{u}\in \bvec{L}^2(D;\Real^d) \text{ with } 1\le d\le n;
    \\ \label{eq:korn:sym.curl}
    \inf_{\overline{\bvec{v}}\in\cvec{RT}^1(D)} \norm[\bvec{L}^2(D;\Real^2)]{\bvec{v} - \overline{\bvec{v}}}
    &\lesssim 
    C_{\rm IK}\norm[\btens{H}^{-1}(D;\Real^{2\times2})]{\SYM\CURL\bvec{v}}
    &\quad& \forall \bvec{v}\in \bvec{L}^2(D;\Real^2)  \text{ with } n=2;
    \\\label{eq:korn.sym.grad}
    \displaystyle\inf_{\overline{\bvec{w}}\in\cvec{RM}_d}\norm[\btens{L}^2(D;\Real^d)]{\bvec{w} - \overline{\bvec{w}}}
    &\lesssim C_{\rm IK}\norm[\btens{H}^{-1}(D;\Real^{d\times d})]{\SYM\GRAD\bvec{w}}
    &\quad& \forall \bvec{w}\in \btens{L}^2(D;\Real^d) \text{ with } d=n.
  \end{alignat}
\end{proposition}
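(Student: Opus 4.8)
The plan is to obtain all three estimates from the single nontrivial analytical input, Lemma~\ref{lem:incomp_korn} (which we assume). The recurring device is to apply the inequality \eqref{eq:incomp_korn} with $p=2$ to a \emph{skew-symmetric} tensor field $\btens{P}=\ANTI\bvec{a}$ manufactured from the vector field under consideration: then $\SYM\btens{P}=\btens{0}$, so only the $\btens{W}^{-1,2}$-term survives, and a direct computation from the definitions of $\TCURL$ and $\ANTI$ yields the pointwise identity
\[
\TCURL\ANTI\bvec{a}=(\DIV\bvec{a})\,\mathbb{I}-(\GRAD\bvec{a})^{\top},
\qquad\text{hence}\qquad
\SYM\TCURL\ANTI\bvec{a}=(\DIV\bvec{a})\,\mathbb{I}-\SYM\GRAD\bvec{a}.
\]
Since $\DIV\bvec{a}=\tr(\SYM\GRAD\bvec{a})$, the right-hand side is a fixed bounded algebraic function of $\SYM\GRAD\bvec{a}$, so $\norm[\btens{H}^{-1}]{\SYM\TCURL\ANTI\bvec{a}}\lesssim\norm[\btens{H}^{-1}]{\SYM\GRAD\bvec{a}}$; moreover $|\ANTI\bvec{a}|=\sqrt2\,|\bvec{a}|$ pointwise, so the left-hand side of \eqref{eq:incomp_korn} equals $\sqrt2\inf_{\bvec{w}\in\cvec{RM}_3}\norm[\bvec{L}^2(D;\Real^3)]{\bvec{a}-\bvec{w}}$. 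These three facts combined immediately give \eqref{eq:korn.sym.grad} in the case $n=d=3$.

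For the two-dimensional statements I would reduce to the three-dimensional case by a cylindrical lift. Given $D\subset\Real^2$ and a field $\bvec{v}$ on $D$, set $\widehat{D}\coloneq D\times(0,\ell)$ with $\ell\simeq\mathrm{diam}(D)$ and $\widehat{\bvec{v}}(x_1,x_2,x_3)\coloneq(v_1(x_1,x_2),v_2(x_1,x_2),0)$, so that $\SYM\GRAD\widehat{\bvec{v}}$ has upper-left $2\times2$ block $\SYM\GRAD\bvec{v}$ and all other entries zero; a tensorisation argument for negative norms then controls $\norm[\btens{H}^{-1}(\widehat D)]{\SYM\GRAD\widehat{\bvec{v}}}$ by $\norm[\btens{H}^{-1}(D)]{\SYM\GRAD\bvec{v}}$ (up to $\mathrm{diam}(D)$-scaling). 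Applying the three-dimensional inequality on $\widehat D$ and then replacing the optimal $\bvec{w}\in\cvec{RM}_3$ by its projection onto the subspace $\{(\alpha-\omega x_2,\beta+\omega x_1,0)\}\cong\cvec{RM}_2$ — the error being controlled by the equivalence, on the finite-dimensional space of affine fields, between the $L^2(\widehat D)$ norm and the Euclidean size of the coefficients — gives \eqref{eq:korn.sym.grad} for $n=d=2$. Inequality \eqref{eq:korn:sym.curl} then follows from this one via the reflection change of variables $\bvec{v}\mapsto\bvec{w}$ with $\bvec{w}(x_1,x_2)\coloneq(v_1(x_2,x_1),-v_2(x_2,x_1))$, under which, by \eqref{eq:def.tensor.C}, $\SYM\CURL\bvec{v}$ is mapped to a tensor field with the same $L^2$ and $\btens{H}^{-1}$ norms as $\SYM\GRAD\bvec{w}$ and the kernel $\cvec{RT}^1(D)$ is mapped onto $\cvec{RM}_2$.

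Finally, \eqref{eq:poincare} reduces componentwise to the scalar estimate $\inf_{c\in\Real}\norm[L^2(D)]{f-c}\lesssim\norm[\bvec{H}^{-1}(D)]{\GRAD f}$. For $D\subset\Real^3$ I would take $\btens{P}=\ANTI(f\bvec{e}_3)$, with $\bvec{e}_3$ the third coordinate vector: again $\SYM\btens{P}=\btens{0}$, every entry of $\SYM\TCURL\btens{P}$ is a scalar multiple of some $\partial_i f$, and Lemma~\ref{lem:incomp_korn} yields $\inf_{\bvec{w}\in\cvec{RM}_3}\norm[\bvec{L}^2]{(0,0,f)-\bvec{w}}\lesssim\norm[\bvec{H}^{-1}]{\GRAD f}$; the infimum over $\cvec{RM}_3$ is then upgraded to an infimum over constants by noting that, for the optimal $\bvec{w}=\bvec{b}+\bvec{c}\times\bvec{x}$, the smallness of its first two (affine) components forces $c_1$ and $c_2$, hence the nonconstant part $c_1x_2-c_2x_1$ of the third component, to have size $\lesssim\norm[\bvec{H}^{-1}]{\GRAD f}$. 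The case $D\subset\Real^2$ is handled by the same cylindrical lift (alternatively, the scalar negative-norm Poincar\'e inequality is classical).

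I expect the difficulty to lie not in the algebraic constructions above, which are essentially forced by the structure of \eqref{eq:incomp_korn}, but in the bookkeeping required to pass from the infima over $\cvec{RM}_3$ delivered by Lemma~\ref{lem:incomp_korn} to the smaller spaces $\cvec{RM}_d$, $\cvec{RT}^1$, and $\cvec{P}^0$ appearing in the statement — and, in the planar cases, to carry this out compatibly with the cylindrical extension and the averaging of negative norms along $x_3$. Every one of these reductions rests on the same elementary remark, namely that a nonconstant affine (more generally, bounded-degree polynomial) field cannot be small in $L^2(D)$ without having correspondingly small coefficients, with constants governed by the shape of $D$; tracking this uniformity is the only point calling for real care.
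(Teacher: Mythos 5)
Your plan is sound and rests on the same engine as the paper's proof -- apply Lemma~\ref{lem:incomp_korn} with $p=2$ to a skew-symmetric field $\btens{P}$ built from the data, so that $\SYM\btens{P}=\btens{0}$ and only the negative-norm term survives -- and for the three-dimensional Korn inequality and the scalar Poincar\'e estimate your choices of $\btens{P}$ ($\ANTI\bvec{a}$ and $\ANTI(f\bvec{e}_3)$, the latter coinciding with the paper's $P_{32}=-P_{23}=u$ up to a choice of axis) are exactly the paper's; your identity $\SYM\TCURL\ANTI\bvec{a}=(\DIV\bvec{a})\,\mathbb{I}-\SYM\GRAD\bvec{a}$ is correct under the paper's sign conventions. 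Where you diverge is in the planar cases: the paper proves \eqref{eq:korn:sym.curl} (and, by the same device, the $n=d=2$ case of \eqref{eq:korn.sym.grad}) directly, by placing $(v_1,v_2)$ in the third row/column of a $3\times3$ skew field so that $\SYM\TCURL\btens{P}$ is block-diagonal with blocks $\SYM\CURL\bvec{v}$ and $\ROT\bvec{v}=\TR(\SYM\CURL\bvec{v})$, and then reading the reduction of the infimum from $\cvec{RM}_3$ to $\cvec{RT}^1(D)$ (or to constants) off the position of the nonzero entries; you instead prove the 3D Korn inequality first, descend to 2D by an explicit cylindrical lift (tensorisation of the $H^{-1}$ norm plus a finite-dimensional projection of $\cvec{RM}_3$ onto planar rigid motions), and obtain \eqref{eq:korn:sym.curl} from the planar \eqref{eq:korn.sym.grad} by the reflection $\bvec{w}(x_1,x_2)=(v_1(x_2,x_1),-v_2(x_2,x_1))$ -- which indeed realises $\SYM\GRAD\bvec{w}$ as an orthogonal congruence of $\SYM\CURL\bvec{v}$ composed with the coordinate swap and maps $\cvec{RT}^1$ bijectively onto $\cvec{RM}_2$, so all norms and kernels match. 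Both routes work; the paper's bespoke constructions are shorter (though they too implicitly rely on applying the 3D lemma to $x_3$-independent fields, i.e.\ on the very lift you make explicit), while your route isolates the dimensional reduction and replaces the paper's asserted equalities of infima -- which, strictly speaking, hold only up to domain-dependent constants, since a 3D rigid motion can trade error between components -- by the robust finite-dimensional coefficient-control argument you describe; that extra care is welcome, and the domain-dependence of the resulting constants is harmless both for the statement and for its use with $D=\Omega$ in Proposition~\ref{prop:Poin-Korn_hybrid}.
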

\begin{proof}
  In order to establish \eqref{eq:poincare} for $1\le d\le n \le 3$, it suffices to consider the case $n=3$ and $d=1$. Hence, we let $u:D\to\Real$ and apply Lemma \ref{lem:incomp_korn} with $\btens{P}$ such that $P_{3,2}= - P_{2,3}= u$ and all the other components set to zero. Therefore, we clearly have $\SYM\btens{P} = \btens{0}$ and
  $$
  \inf_{\bvec{w}\in \boldsymbol{\mathcal{RM}}_3}\norm[\btens{L}^p(D;\Real^{3\times3})]{\btens{P} - \ANTI\bvec{w}}
  = \inf_{\overline{u}\in\Real}\norm[\btens{L}^p(D;\Real^{3\times3})]{\btens{P} - \ANTI(\overline{u},0,0)}
  = \sqrt{2} \inf_{\overline{u}\in\Real}\norm[L^2(D)]{u - \overline{u}}.
  $$
  Moreover, it is observed that
  $$
  \SYM\TCURL\btens{P} =\frac12 \begin{pmatrix}
    0 & -\partial_2 u &  -\partial_3 u\\
    -\partial_2 u&  2\partial_1 u &  0 \\
    -\partial_3 u&  0 &  2\partial_1 u
  \end{pmatrix} \quad\Longrightarrow\;
  \norm[\btens{H}^{-1}(D;\Real^{3\times3})]{\SYM\TCURL\btens{P}}
  \le 2 \norm[\btens{H}^{-1}(D;\Real^{3})]{\GRAD u}.
  $$
  As a result, we get the conclusion.%
  \smallskip
  
  We now proceed with the proof of \eqref{eq:korn:sym.curl}.
  We let $n=2$, $\bvec{v}\in \bvec{L}^2(D;\Real^2)$, and define a skew-symmetric tensor field $\btens{P}$ such that
  $$
  \btens{P} = \begin{pmatrix}
    0 & 0& v_1\\
    0 & 0 & v_2\\
    -v_1 & v_2 & 0
  \end{pmatrix}\quad\Longrightarrow\;
  \SYM\TCURL\btens{P} =
  \begin{pmatrix}
    \SYM\CURL \bvec{v} & \bvec{0}\\
    \bvec{0} & \ROT \bvec{v}
  \end{pmatrix} = 
  \begin{pmatrix}
    \SYM\CURL \bvec{v} & \bvec{0}\\
    \bvec{0} & \TR(\SYM\CURL \bvec{v})
  \end{pmatrix}.
  $$
  Therefore, it is inferred that $\norm[\btens{H}^{-1}(D;\Real^{3\times3})]{\SYM\TCURL\btens{P}}\le 2 \norm[\btens{H}^{-1}(D;\Real^{3})]{\SYM\CURL \bvec{v}}$.
  Additionally, since $\bvec{v}$ does not depend on $x_3$ and due to the position of the non-zero entries in $\btens{P}$, it is readily inferred that
  $$
  \inf_{\bvec{w}\in \boldsymbol{\mathcal{RM}}_3}\norm[\btens{L}^p(D;\Real^{3\times3})]{\btens{P} - \ANTI\bvec{w}}
  =  \inf_{\overline{\bvec{v}}\in\cvec{RT}^1(D)}\norm[\btens{L}^p(D;\Real^{3\times3})]{\btens{P} - \ANTI (\overline{\bvec{v}},0)}
  = \sqrt{2} \inf_{\overline{\bvec{v}}\in\cvec{RT}^1(D)} \norm[\bvec{L}^2(D;\Real^2)]{\bvec{v} - \overline{\bvec{v}}}.
  $$
  Hence, the conclusion follows again by using \eqref{eq:incomp_korn}.
  \smallskip
  
  The proof of \eqref{eq:korn:sym.curl} is obtained with similar arguments by using Lemma \ref{lem:incomp_korn} with
  $$
  \btens{P} = \begin{pmatrix}
    0 & 0& w_2\\
    0 & 0 & -w_1\\
    -w_2 & w_1 & 0
  \end{pmatrix}\quad\text{and }\; 
  \btens{P} = \begin{pmatrix}
    0 & -w_3& w_2\\
    w_3 & 0 & -w_1\\
    -w_2 & w_1 & 0
  \end{pmatrix},
  $$
  for the case $n=d=2$ and $n=d=3$, respectively.
\end{proof}
We are now ready to establish the main result of this Section. For the sake of simplicity, we detail the result only for the two dimensional case, but we refer to Remark \ref{rem:generalise} for some possible generalisations.
\begin{proposition}[Poincar\'e--Korn inequalities for hybrid vector fields]
\label{prop:Poin-Korn_hybrid}
Let 
$$
 \uvec{U}_h^k\coloneq\Big\{
  \begin{aligned}[t]
    &\uvec{u}_h=
    \big(
    (\bvec{u}_T)_{T\in\Th},
    (\bvec{u}_E)_{E\in\Eh},
    \big)\st
    \text{$\bvec{u}_T\in\vPoly{k}(T;\Real^2)\ \forall T\in\Th$}, \;
    \text{$\bvec{u}_E\in\vPoly{k}(E;\Real^2)\  \forall E\in\Eh$}
    \Big\}
  \end{aligned}
  $$
  and, for all $\uvec{u}_h\in\uvec{U}_h^k$, denote by $\bvec{u}_h$ the piecewise polynomial field on $\Th$ such that $(\bvec{u}_h)_{|T} \coloneq \bvec{u}_T$ for all $T\in\Th$.
  Then, there is a constant $C_{\rm PK}>0$, only depending on $\Omega$ and the mesh regularity parameter, such that
  \begin{enumerate}
    \item For all $\uvec{u}_h\in \uvec{U}_h^k$ satisfying $\int_\Omega\bvec{u}_h= \bvec{0}$,
  \begin{equation}
    \label{eq:hho-Poin}
    \norm[\bvec{L}^2(\Omega;\Real^2)]{\bvec{u}_h}^2 \le C_{\rm PK} \sum_{T\in\Th}\left(\norm[\bvec{L}^2(T;\Real^{2\times2})]{\GRAD\bvec{u}_T}^2 
                                                                                    +  \sum_{E\in\ET}h_T^{-1}\norm[\bvec{L}^2(E;\Real^2)]{\bvec{u}_T - \bvec{u}_E}^2\right);
  \end{equation}
  \item For all $\uvec{u}_h\in \uvec{U}_h^k$ satisfying $\int_\Omega\bvec{u}_h\cdot\bvec{w} = 0$ for all $\bvec{w}\in\RT{1}(\Omega)$,
  \begin{equation}
    \label{eq:hho-SymCurl}
    \norm[\bvec{L}^2(\Omega;\Real^2)]{\bvec{u}_h}^2 \le C_{\rm PK} \sum_{T\in\Th}\left(\norm[\bvec{L}^2(T;\Real^{2\times2})]{\SYM\CURL\bvec{u}_T}^2 
                                                                                    +  \sum_{E\in\ET}h_T^{-1}\norm[\bvec{L}^2(E;\Real^2)]{\bvec{u}_T - \bvec{u}_E}^2\right);
  \end{equation}
    \item For all $\uvec{u}_h\in \uvec{U}_h^k$ satisfying $\int_\Omega\bvec{u}_h\cdot\bvec{w} = 0$ for all $\bvec{w}\in\ctens{RM}_2$,
  \begin{equation}
    \label{eq:hho-Korn}
    \norm[\bvec{L}^2(\Omega;\Real^2)]{\bvec{u}_h}^2 \le C_{\rm PK} \sum_{T\in\Th}\left(\norm[\bvec{L}^2(T;\Real^{2\times2})]{\SYM\GRAD\bvec{u}_T}^2 
                                                                                    +  \sum_{E\in\ET}h_T^{-1}\norm[\bvec{L}^2(E;\Real^2)]{\bvec{u}_T - \bvec{u}_E}^2\right).
  \end{equation}
  \end{enumerate}
\end{proposition}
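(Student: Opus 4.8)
The plan is to obtain all three estimates \eqref{eq:hho-Poin}, \eqref{eq:hho-SymCurl}, and \eqref{eq:hho-Korn} by one and the same three-step argument, the broken polynomial field $\bvec{u}_h\in\bvec{L}^2(\Omega;\Real^2)$ playing the role of the $\bvec{L}^2$ function in Proposition~\ref{prop:poin-korn_L2}. In each case, denote by $\mathcal{D}$ the relevant first-order operator ($\GRAD$, $\SYM\CURL$, or $\SYM\GRAD$) and by $X$ the corresponding kernel space ($\cvec{P}^0(\Omega;\Real^2)$, $\RT{1}(\Omega)$, or $\ctens{RM}_2$). First, I would apply to $\bvec{u}_h$ the relevant inequality of Proposition~\ref{prop:poin-korn_L2} on $D=\Omega$, getting
\[
\inf_{\overline{\bvec{u}}\in X}\norm[\bvec{L}^2(\Omega;\Real^2)]{\bvec{u}_h-\overline{\bvec{u}}}
\lesssim\norm[\btens{H}^{-1}(\Omega;\Real^{2\times2})]{\mathcal{D}\bvec{u}_h},
\]
where $\mathcal{D}\bvec{u}_h$ is the \emph{distributional} image of $\bvec{u}_h$ and the negative norm is the dual norm on $\big(\btens{H}_0^1(\Omega;\Real^{2\times2})\big)'$. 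Second, the orthogonality constraint imposed on $\uvec{u}_h$ in each of the three cases states exactly that $\bvec{u}_h$ is $\bvec{L}^2$-orthogonal to $X$, so that the best-approximation infimum above equals $\norm[\bvec{L}^2(\Omega;\Real^2)]{\bvec{u}_h}$. It therefore suffices to bound $\norm[\btens{H}^{-1}(\Omega;\Real^{2\times2})]{\mathcal{D}\bvec{u}_h}$ by the square root of the right-hand side of the inequality being proved; this is the heart of the matter.

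For that last bound I would test against an arbitrary $\btens{\phi}\in\btens{H}_0^1(\Omega;\Real^{2\times2})$, which for the $\SYM\CURL$ and $\SYM\GRAD$ cases may be assumed symmetric (since $\mathcal{D}\bvec{u}_h$ is then symmetric-tensor-valued and replacing $\btens{\phi}$ by $\SYM\btens{\phi}$ does not increase its $\btens{H}^1$-norm). The pairing can be rewritten as $\langle\mathcal{D}\bvec{u}_h,\btens{\phi}\rangle=-\int_\Omega\bvec{u}_h\cdot\mathcal{D}^\ast\btens{\phi}$ with $\mathcal{D}^\ast\in\{\VDIV,\VROT,\VDIV\}$ respectively: for the two $\VDIV$ cases by integration by parts (using the symmetry of $\btens{\phi}$ in the Korn case so as to replace $\GRAD$ by $\SYM\GRAD$), and for the $\SYM\CURL$ case by the formula \eqref{eq:ibp.VT}. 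Integrating by parts \emph{element by element} on each $T\in\Th$ produces the broken first-order term $\sum_T\int_T\mathcal{D}\bvec{u}_T:\btens{\phi}$ together with a sum of edge integrals; inserting the single-valued edge unknown $\bvec{u}_E$ into each of these, the $\bvec{u}_E$-contributions cancel across interior edges (the relevant trace of $\btens{\phi}$ being single-valued there) and vanish on boundary edges (where $\btens{\phi}=\btens{0}$ since $\btens{\phi}\in\btens{H}_0^1$). This leaves
\[
\langle\mathcal{D}\bvec{u}_h,\btens{\phi}\rangle
=\sum_{T\in\Th}\int_T\mathcal{D}\bvec{u}_T:\btens{\phi}
-\sum_{T\in\Th}\sum_{E\in\ET}\omega_{TE}\int_E(\bvec{u}_T-\bvec{u}_E)\cdot(\btens{\phi}\,\bvec{m}_E),
\]
with $\bvec{m}_E\in\{\normal_E,\tangent_E\}$ depending on the operator. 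Estimating the first sum by Cauchy--Schwarz, and the second by Cauchy--Schwarz followed by the \emph{continuous} trace inequality $\norm[L^2(E)]{\btens{\phi}}^2\lesssim h_T^{-1}\norm[\btens{L}^2(T;\Real^{2\times2})]{\btens{\phi}}^2+h_T\norm[\btens{L}^2(T;\Real^{2\times2})]{\GRAD\btens{\phi}}^2$ and mesh regularity ($\card(\ET)\lesssim 1$, $h_T\lesssim 1$), one obtains $|\langle\mathcal{D}\bvec{u}_h,\btens{\phi}\rangle|\lesssim R^{1/2}\,\norm[\btens{H}^1(\Omega;\Real^{2\times2})]{\btens{\phi}}$, where $R$ is the right-hand side of the inequality under proof. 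Taking the supremum over $\btens{\phi}$ gives $\norm[\btens{H}^{-1}(\Omega;\Real^{2\times2})]{\mathcal{D}\bvec{u}_h}\lesssim R^{1/2}$; combining with the first two steps and squaring concludes. The constant thus obtained depends only on $\Omega$ (through $C_{\rm IK}$ of Lemma~\ref{lem:incomp_korn}) and on the mesh regularity parameter (through the trace inequality and the bound on the number of edges of an element), as claimed, and in particular is independent of $k$.

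The step I expect to be the main obstacle is the edge bookkeeping in the element-by-element integration by parts: one must verify that inserting the single-valued $\bvec{u}_E$ reproduces exactly the jump-type quantities $h_T^{-1}\norm[\bvec{L}^2(E;\Real^2)]{\bvec{u}_T-\bvec{u}_E}^2$ of the right-hand sides --- the ``consistent'' parts cancelling on interior edges and dropping on boundary edges because $\btens{\phi}$ vanishes there --- and that, after summation over the mesh, the non-polynomial trace of $\btens{\phi}$ is controlled by $\norm[\btens{H}^1(\Omega;\Real^{2\times2})]{\btens{\phi}}$, which is why the continuous (rather than a discrete) trace inequality must be used.
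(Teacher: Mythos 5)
Your proposal follows essentially the same route as the paper's proof: use the orthogonality constraint to replace the infimum in Proposition~\ref{prop:poin-korn_L2} by $\norm[\bvec{L}^2(\Omega;\Real^2)]{\bvec{u}_h}$, then bound the negative Sobolev norm by duality, integrating by parts element by element, inserting the single-valued $\bvec{u}_E$ thanks to the trace continuity and vanishing boundary values of the test function, and concluding with Cauchy--Schwarz and the continuous trace inequality. The only difference is cosmetic: you carry out the unified argument for all three operators, whereas the paper details only the $\SYM\CURL$ case and notes the others are analogous.
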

\begin{proof}  
  We only detail the proof of \eqref{eq:hho-SymCurl}, which is used in the proof of Lemma \ref{lem:poincare}, since \eqref{eq:hho-Poin} and \eqref{eq:hho-Korn} can be obtained by reasoning in a similar way.
  Let $\uvec{u}_h\in \uvec{U}_h^k$ and observe  that the condition $\int_\Omega\bvec{u}_h\cdot\bvec{w} = 0$ for all $\bvec{w}\in\RT{1}(\Omega)$ implies
  $$
  \inf_{\overline{\bvec{v}}\in\cvec{RT}^1(\Omega)} \norm[\bvec{L}^2(\Omega;\Real^2)]{\bvec{u}_h - \overline{\bvec{v}}} =  \norm[\bvec{L}^2(\Omega;\Real^2)]{\bvec{u}_h}.
  $$
  Therefore, applying the second inequality in Proposition \ref{prop:poin-korn_L2}, it follows that
  $$
  \begin{aligned}
    \norm[\bvec{L}^2(\Omega;\Real^2)]{\bvec{u}_h} &\lesssim
    \norm[\btens{H}^{-1}(\Omega;\Real^{2\times 2})]{\SYM\CURL\bvec{u}_h}
    =\sup_{\btens{\eta}\in \btens{H}^1_0(\Omega;\Symm),\,\norm[\btens{H}^1(\Omega;\Real^{2\times2})]{\btens{\eta}}=1} \int_\Omega \bvec{u}_h \cdot \left(\VROT\btens{\eta}\right)
    \\
    &=
    \sup_{\btens{\eta}\in \btens{H}^1_0(\Omega;\Symm),\,\norm[\btens{H}^1(\Omega;\Real^{2\times2})]{\btens{\eta}}=1}
    \sum_{T\in\Th}\left(-\int_T\CURL\bvec{u}_T:\btens{\eta}  
    + \sum_{E\in\ET}\omega_{TE}\int_{E}\bvec{u}_T\cdot(\btens{\eta} \,\bvec{t}_E)\right)
    \\
    &=
    \sup_{\btens{\eta}\in \btens{H}^1_0(\Omega;\Symm),\,\norm[\btens{H}^1(\Omega;\Real^{2\times2})]{\btens{\eta}}=1}
    \sum_{T\in\Th}\left(-\int_T\SYM\CURL\bvec{u}_T:\btens{\eta}
    + \sum_{E\in\ET}\omega_{TE}\int_{E}(\bvec{u}_T - \bvec{u}_E)\cdot\btens{\eta} \,\bvec{t}_E \right),
  \end{aligned}
  $$
  where we have integrated by parts element by element and used the fact that $\btens{\eta}$ has continuous tangential traces across interedges and vanishing tangential traces on the boundary in order to insert $\bvec{u}_E$ into the boundary term. 
  Applying a Cauchy--Schwarz inequality on the integrals and invoking a discrete Cauchy--Schwarz inequality on the sum over 
  $T\in\Th$, we infer that
  $$
  \begin{aligned}
    \norm[\bvec{L}^2(\Omega;\Real^2)]{\bvec{u}_h}
    &\lesssim
    \sup_{\btens{\eta}\in \btens{H}^1_0(\Omega;\Symm),\,\norm[\btens{H}^1(\Omega;\Real^{2\times2})]{\btens{\eta}}=1}
    \left(\sum_{T\in\Th}\norm[\btens{L}^2(T;\Real^{2\times2})]{\SYM\CURL\bvec{u}_T}^2 \right)^{\frac12}
    \norm[\btens{L}^2(\Omega;\Real^{2\times2})]{\btens{\eta}} 
    \\
    &\qquad
    +\left(\sum_{T\in\Th}\sum_{E\in\ET}h_T^{-1}\norm[\btens{L}^2(E;\Real^2)]{\bvec{u}_T - \bvec{u}_E}^2\right)^{\frac12}
    \left(\sum_{T\in\Th}\sum_{E\in\ET}h_T \norm[\btens{L}^2(E;\Real^2)]{\btens{\eta} \,\bvec{t}_E} \right)^{\frac12}
    \\
    &\lesssim 
    \left[
      \sum_{T\in\Th}\left(
      \norm[\btens{L}^2(T;\Real^{2\times2})]{\SYM\CURL\bvec{u}_T}^2
      + \sum_{E\in\ET} \hspace{-1mm} h_T^{-1}\norm[\bvec{L}^2(E;\Real^2)]{\bvec{u}_T - \bvec{u}_{E}}^2
      \right)
      \right]^{\frac12}
    \\
    &\qquad\times\cancelto{1}{%
      \sup_{\btens{\eta}\in \btens{H}^1_0(\Omega;\Symm),\,\norm[\btens{H}^1(\Omega;\Real^{2\times2})]{\btens{\eta}}=1}
      \norm[\btens{H}^1(\Omega;\Symm)]{\btens{\eta}}
    },
  \end{aligned}
  $$
  where, in the second inequality, we have used the continuous trace inequality \cite[Lemma 1.31]{Di-Pietro.Droniou:20}. 
\end{proof}
\begin{remark}[Generalisations]
  \label{rem:generalise}
  The results of Proposition \ref{prop:Poin-Korn_hybrid} admit several extensions that we have decided not to include for the sake of brevity.
  First, \eqref{eq:hho-Poin} and \eqref{eq:hho-Korn} can also be established in the three-dimensional case simply by replacing the interedges with interfaces.
  Second, since the starting argument given by Lemma \ref{lem:incomp_korn} holds for all Lebesgue indices $p\in(1,\infty)$, we can generalise the discrete Poincaré--Korn inequalities to the Banach setting.
  The main modification required in the proof consists in replacing Cauchy--Schwarz inequalities with suitable versions of H\"older inequalities.
  Finally, we notice that in the proof of Proposition \ref{prop:Poin-Korn_hybrid} we are not using any inverse inequality requiring the hybrid vector fields to be polynomials. Thus, the previous Poincar\'e--Korn inequalities can be extended to vector fields with piecewise Sobolev regularity.
\end{remark}

\section*{Acknowledgments}
Michele Botti acknowledges funding from the European Union’s Horizon 2020 research and innovation programme under the Marie Sk\l odowska-Curie grant agreement No. 896616 ``PDGeoFF''.
Daniele Di Pietro acknowledges the partial support of \emph{Agence Nationale de la Recherche} through grants ANR-20-MRS2-0004 ``NEMESIS'' and ANR-16-IDEX-0006 ``RHAMNUS''.


\printbibliography

\end{document}